\documentclass[10pt]{article}
	
	\usepackage{caption}
    \usepackage{url}
    \usepackage{verbatim}
    \usepackage{parskip}
	\setlength{\parindent}{15pt}
    \textwidth=6.5in
    \textheight=9.00in
    \footskip=0.5in
    \oddsidemargin=0in
    \topmargin=-0.5in

	\usepackage{amsmath}
    \usepackage[usenames, dvipsnames]{color}
	\usepackage{amsthm}
	\usepackage{amsfonts}
	\usepackage{graphicx}
	\graphicspath{ {figures/}{figures/eps/}{figures/pdf/}{figures/png/}}
    \usepackage{setspace}
    \onehalfspacing
	\usepackage{bm}
	\usepackage{float}
	\usepackage{subfig}
	\usepackage{mathtools}

	\def\captionfont{\setb@se{11pt}\protect\footnotesize}
    \def\captionfont{\protect\footnotesize}

    \newcommand{\iprd}[2]{\left( #1 , #2 \right)}
    
    \newcommand{\Tauh}{\mathcal{T}_h}
    
    \newcommand{\Nodeh}{\mathcal{N}_h}
    \newcommand{\aiprd}[2]{a\left( #1 , #2 \right)}

    \newcommand{\clabel}{c}
    \newcommand{\cform}[6]{\clabel \left( #1 , #2, #3, #4; #5, #6 \right)}
    \newcommand{\eform}[4]{e\left( #1 , #2;  #3 , #4 \right)}

    \newcommand{\Sh}{S_h}

    \newcommand{\N}{\mathbb{N}}
    \newcommand{\Sp}{\mathbb{S}}
    \newcommand{\U}{\mathbb{U}}
    \newcommand{\V}{\mathbb{V}}
    \newcommand{\Vp}{\V^{\perp}}
    \newcommand{\W}{\mathbb{W}}
    \newcommand{\X}{\mathbb{X}}
    \newcommand{\Y}{\mathbb{Y}}
    \newcommand{\Nh}{\N_h}
    \newcommand{\Sph}{\Sp_h}
    \newcommand{\Uh}{\U_h}
    
    \newcommand{\Vph}{\Vp_h}
    \newcommand{\Wh}{\W_h}
    \newcommand{\Xh}{\X_h}
    \newcommand{\Yh}{\Y_h}

    \newcommand{\R}{\mathbb{R}}

	\def\norm#1#2{\left\| #1 \right\|_{#2}}

	\newcommand{\dtau}{\delta_\tau}

	\newcommand{\phih}{\phi_h}

	\newcommand{\nuh}{\nu_h}
	\newcommand{\psih}{\psi_h}
    \newcommand{\br}{{\bf r}}
	\newcommand{\bu}{{\bf u}}
	\newcommand{\bv}{{\bf v}}
	\newcommand{\bw}{{\bf w}}
	\newcommand{\bx}{{\bf x}}
	
	\newcommand{\bn}{{\bf n}}
	
	\newcommand{\bntilh}{\tilde{{\bf n}}_h}
	\newcommand{\bnh}{{\bf n}_h}
	
	\newcommand{\bI}{{\bf I}}
    \newcommand{\brh}{\br_h}
	\newcommand{\buh}{\bu_h}
	\newcommand{\bvh}{\bv_h}
	\newcommand{\bwh}{\bw_h}
	
	\newcommand{\sh}{s_h}
	\newcommand{\shave}{\overline{s}_h}
	\newcommand{\muh}{\mu_h}

    \newcommand{\Etot}{E}                     
    \newcommand{\Eerk}{E_{\mathrm{erk}}}      
    \newcommand{\Edw}{E_{\mathrm{dw}}}        
    \newcommand{\Eerktot}{J}        
    \newcommand{\dwfunc}{f}                   
    \newcommand{\Ech}{E_{\mathrm{ch}}}  
     \newcommand{\Echdw}{E_{\mathrm{chdw}}}  
      \newcommand{\Echgd}{E_{\mathrm{chp}}}       
    \newcommand{\Ewan}{E_{\mathrm{a},\bn}}      
    \newcommand{\Ewas}{E_{\mathrm{a},s}}      
    \newcommand{\Werk}{\omega_{\mathrm{erk}}} 
    \newcommand{\Wdw}{\omega_{\mathrm{dw}}}   
    \newcommand{\Wchdw}{\omega_{\mathrm{chdw}}}   
    \newcommand{\Wchgd}{\omega_{\mathrm{chp}}}   
    \newcommand{\Wwan}{\omega_{\mathrm{a},\bn}} 
    \newcommand{\Wwas}{\omega_{\mathrm{a},s}} 
    \newcommand{\bdys}{\Gamma_{s}}
    
    \newcommand{\bdybu}{\Gamma_{\bu}}
    
    \newcommand{\Admerk}{\mathbb{A}_{erk}}
    \newcommand{\Admerkh}{\mathbb{A}_{erk,h}}
    
    \newcommand{\Admall}{\mathbb{A}}
    \newcommand{\Adelta}{A_{\delta}}
    \newcommand{\Sing}{\mathcal{S}}

	\newtheorem{thm}{Theorem}[section]
	\newtheorem{prop}[thm]{Proposition}
	\newtheorem{cor}[thm]{Corollary}
	
	\newtheorem{rmk}[thm]{Remark}
	\newtheorem{lem}[thm]{Lemma}


	\begin{document}
\title{A Finite Element Method for a Phase Field Model of \\ Nematic Liquid Crystal Droplets}

	\author{Amanda E. Diegel, ~Shawn W. Walker}
	
	\maketitle
	
	\numberwithin{equation}{section}

\begin{abstract}

We develop a novel finite element method for a phase field model of nematic liquid crystal droplets. The continuous model considers a free energy comprised of three components: the Ericksen's energy for liquid crystals, the Cahn-Hilliard energy representing the interfacial energy of the droplet, and a weak anchoring energy representing the interaction of the liquid crystal molecules with the surface tension on the interface (i.e. anisotropic surface tension).  Applications of the model are for finding minimizers of the free energy and exploring gradient flow dynamics.  We present a finite element method that utilizes a special discretization of the liquid crystal elastic energy, as well as mass-lumping to discretize the coupling terms for the anisotropic surface tension part.  Next, we present a discrete gradient flow method and show that it is monotone energy decreasing.  Furthermore, we show that global discrete energy minimizers $\Gamma$-converge to global minimizers of the continuous energy.  We conclude with numerical experiments illustrating different gradient flow dynamics, including droplet coalescence and break-up.

\end{abstract}

\section{Introduction}
\label{sec:intro}

The purpose of this paper is to couple Ericksen's model for nematic liquid crystals to an interfacial energy (modeled via the Cahn-Hilliard equation) in order to model liquid crystal droplets.  Interest in developing numerical methods for modeling liquid crystals or complex fluids involving liquid crystals has grown in recent years, \cite{Alouges_SJNA1997, Barrett_M2AN2006,Cohen_CPC1989, Guillen-Gonzalez_M2AN2013, Liu_SJNA2000,Lin_SJNA1989, Walkington_M2AN2011, nochetto2017finite, Nochetto_JCP2017}. One driver for this development is the large host of technological applications of liquid crystals \cite{Ackerman_NC2015, Araki_PRL2006, Bisoyi_CSR2011, Blanc_Sci2016, Humar_OE2010, Moreno-Razo_Nat2012, Musevic2011, Rahimi_PNAS2015, Shah_Small2012, Sun_SMS2014, Wang_NL2014}. Popular models representing liquid crystal substances include the Q-tensor model, the Oseen-Frank model, and Ericksen's model with a variable degree of orientation.  A common issue in any of these methods is capturing defects.
For instance, in \cite{Barrett_M2AN2006}, Barrett et.~al.~presents a fully discrete finite element method for the evolution of uniaxial nematic liquid crystals with variable degree of orientation.  An advantage of their method is that they are able to provide convergence results. However, in order to avoid the degeneracy introduced by the degree of orientation variable $s$, they use a regularization of Ericksen's model.

The use of diffuse interface theory to describe the mixing of complex fluids has likewise grown in popularity and the research group which includes J.~Zhao, X.~Yang, Q.~Wang, J.~Shen (among others) has released several papers on this subject \cite{zhao2016decoupled,yang2017numerical,zhao2016semi,zhao2017novel,zhao2017decoupled,zhao2016numerical}. Their models may be described as energy minimizing models whereby their energy functionals are composed of a kinetic energy and a free energy. The kinetic energy is based on fluid velocity coming from a fluid model, such as Stoke's flow. The free energy is then broken down into three parts: the mixing energy, the bulk free energy for liquid crystals, and an anchoring energy. For instance, in \cite{zhao2016decoupled}, Zhao et.~al.~develop an energy-stable scheme for a binary hydrodynamic phase field model of mixtures of nematic liquid crystals and viscous fluids where they use the Cahn-Hilliard energy to describe the mixing energy and the Oseen-Frank energy to describe the bulk free energy for liquid crystals.  Defects are effectively regularized by penalizing the unit length constraint.

The work presented herein is unique in the following sense: the Cahn-Hilliard energy is combined directly with Ericksen's energy in order to develop a phase field model for nematic liquid crystal droplets in a pure liquid crystal substance.  The model considers a free energy which is comprised of three components: the Ericksen's energy for liquid crystals, the Cahn-Hilliard energy representing the interfacial energy of the droplet, and a weak anchoring energy representing the interaction of the liquid crystal molecules with the surface tension on the interface (which gives rise to anisotropic surface tension).  The goal is to find minimizers of this free energy.  To this end, we present a finite element discretization of the energy and apply a modified time-discrete gradient flow method to compute minimizers.  In this way, the numerical scheme considered herein combines the finite element approximation of the Ericksen model of nematic liquid crystals in \cite{nochetto2017finite}, which captures point and line defects and requires no regularization, and the technique considered in \cite{diegel15} which follows a convex splitting gradient flow strategy for modeling the Cahn-Hilliard equation.

An outline of the paper is as follows.  Section \ref{sec:contin_energy_models} describes the continuous energy model for the liquid crystal/surface tension system.  In Section \ref{sec:energy-discretization}, we present a discretization of the total energy \eqref{eq:energy-total} followed by the development of a discrete gradient flow strategy in Section \ref{sec:gradient-flow}. In Section \ref{sec:fully-discrete-scheme}, we present a fully discrete finite element scheme based on the gradient flow strategy and prove its stability. In Section \ref{sec:gamma-convergence}, we demonstrate that the discrete energy converges to the continuous energy using the tools of $\Gamma$-convergence. We conclude with several numerical experiments in Section \ref{sec:numerical-experiments}, and some discussion in Section \ref{sec:conclusion}.

\section{Continuous Energy Models}\label{sec:contin_energy_models}

\subsection{Ericksen Energy}\label{sec:Erk_energy}

We consider the one-constant model for liquid crystals with variable degree of orientation \cite{Ericksen_ARMA1991, deGennes_book1995, Virga_book1994} (Ericksen's model) on a Lipschitz domain $\Omega \subset \mathbb{R}^d$ with $d=2,3$.  The liquid crystal state is modeled by a director field $\bn(x)$ and a scalar function $s(x)$, the so-called degree-of-orientation.  Equilibrium is attained when $(s,\bn)$ minimizes the non-dimensional energy
\begin{equation}\label{eq:ericksen_total_energy}
  \Eerktot(s,\bn) = \Eerk(s,\bn) + \Edw(s),
\end{equation}
where $\Eerk(s,\bn)$ and $\Edw(s)$ are defined by
\begin{align}
\Eerk(s,\bn) &:= \int_{\Omega} \left[ \kappa |\nabla s|^2 + s^2 |\nabla \bn|^2 \right] \, d\bx,
\label{eq:energy-ericksen}
\\
\Edw(s) &:= \int_{\Omega} \dwfunc (s(\bx)) \, d\bx,
\label{eq:energy-ericksen-dw}
\end{align}
with $\kappa > 0$ and where the double well potential $\dwfunc$ is a $C^2$ function defined on $-1/2 < s < 1$ that satisfies the following conditions \cite{Ericksen_ARMA1991, Ambrosio_MM1990a, Lin_CPAM1991}:
\begin{enumerate}
  \item $\lim_{s \rightarrow 1} \dwfunc (s) = \lim_{s \rightarrow -1/2} \dwfunc (s) = \infty$,
  \item $\dwfunc(0) > \dwfunc(s^*) = \min_{s \in [-1/2, 1]} \dwfunc(s)$ for some $s^* \in (0,1)$,
  \item $\dwfunc'(0) = 0$.
\end{enumerate}

The existence of minimizers $(s^*,\bn^*)$ of \eqref{eq:ericksen_total_energy} was shown in \cite{Ambrosio_MM1990a, Lin_CPAM1991}, along with regularity properties.  Minimizers may exhibit non-trivial defects (depending on boundary conditions) \cite{Bethuel_book1994, Blinov_book1983, Brezis_CMP1986, Lin_CPAM1991, Lin_CPAM1989, Schoen_JDG1982}.  Some analytical solutions can be found in \cite{Virga_book1994}.  The presence of $s$ in \eqref{eq:energy-ericksen} gives a \emph{degenerate} Euler-Lagrange equation for $\bn$.  This allows for line and plane defects (singularities of $\bn$) when $s$ vanishes in dimension $d=3$.  The size of defects and regularity properties of minimizers were studied in \cite{Lin_CPAM1991}.  This lead to the study of dynamics \cite{Calderer_SJMA2002} and corresponding numerics \cite{Barrett_M2AN2006}.  However, in both cases, they regularize the model to avoid the degeneracy induced by the order parameter $s$ vanishing. In \cite{nochetto2017finite}, they present a numerical method, without requiring any regularization, for computing minimizers of \eqref{eq:ericksen_total_energy} that exhibit non-trivial defect structures.

The theoretical framework follows \cite{Ambrosio_MM1990a, Lin_CPAM1991}.  We introduce an auxiliary variable $\bu := s\bn$, and rewrite Ericksen's energy \eqref{eq:ericksen_total_energy} as
\begin{align}
\Eerk(s,\bn) = \widetilde{\Eerk}(s,\bu) := \int_\Omega \left( (\kappa -1) |\nabla s|^2 + |\nabla \bu|^2 \right) d{\bf x},
\label{eq:energy-s-u}
\end{align}
which follows from differentiating the identity $|\bn|^2 = 1$.  This suggests the following admissible class for $(s,\bu)$:
\begin{align}
    \Admerk := \{(s,\bu) \in [H^1(\Omega)]^{d+1} : ~\text{ there exists $\bn$ such that \eqref{eq:struct_condition} holds} \},
\label{eq:admissible-class-s-u}
\end{align}
where
\begin{equation}\label{eq:struct_condition}
  \bu = s \bn, \quad - 1/2 < s < 1 \text{ a.e.~in } \Omega, \text{ and} \quad \bn \in \mathbb{S}^{d-1} \text{ a.e.~in } \Omega,
\end{equation}
is called the structural condition of $\Admerk$.  Note: we use an abuse of notation and define $(s,\bn)$ in $\Admerk$ to mean $(s,\bu)$ in $\Admerk$ with $\bu = s \bn$.

Moreover, to enforce boundary conditions on $(s,\bu)$, possibly on different parts of the boundary, let $(\Gamma_s, \Gamma_\bu)$ be open subsets of $\partial \Omega$ where we set Dirichlet boundary conditions for $(s,\bu)$. Then the restricted admissible class is defined by
\begin{align}\label{eq:admis_set_BCs}
\Admerk(g,{\bf r}) := \{(s,\bu) \in \Admerk : s|_{\Gamma_s} = g, \quad \bu|_{\Gamma_\bu} = {\bf r}\},
\end{align}
for some given functions $(g,{\bf r}) \in [W_\infty^1(\Omega)]^{d+1}$ that satisfy \eqref{eq:struct_condition} on $\partial \Omega$.  If we further assume
\begin{equation}\label{eq:g_pos}
g \ge \delta_0 \quad\text{ on } \partial \Omega, ~ \text{ for some } \delta_0 > 0,
\end{equation}
then $\bn$ is $H^1$ in a neighborhood of $\partial \Omega$ and satisfies $\bn=g^{-1} {\bf r} \in \mathbb{S}^{d-1}$ on $\partial \Omega$.

In the case where $s$ is a non-zero constant, \eqref{eq:energy-ericksen} effectively reduces to the Oseen-Frank energy $\int_{\Omega} |\nabla \bn|^2$.  If $s$ is variable, it may vanish in order to relax the energy of defects.  In this case, discontinuities of $\bn$ (i.e. defects) may occur in the singular set
\begin{align}\label{set:singular_s}
    \Sing := \{ x \in \Omega :\; s(x) = 0 \},
\end{align}
with \emph{finite} energy: $\Eerk(s,\bn) < \infty$.  The parameter $\kappa$ in \eqref{eq:energy-ericksen} can influence the appearance of  defects; see \cite{nochetto2017finite, NochettoWalker_MRSproc2015} for examples of this effect.

\subsection{Phase Field Energy}

The Cahn-Hilliard (CH) energy is given by \cite{cahn61, cahn58}
\begin{align}
\Ech(\phi) = \int_{\Omega} \frac{1}{4\varepsilon} \left(\phi^2-1\right)^2 + \frac{\varepsilon}{2} \int_{\Omega} |\nabla \phi |^2 \, d\bx =: \Echdw (\phi) + \Echgd (\phi),
\label{eq:energy-CH}
\end{align}
where $\varepsilon > 0$ is a small constant representing the interfacial width between the liquid crystal droplet and surrounding liquid crystal substance and $\phi$ represents a concentration field. The CH energy \eqref{eq:energy-CH} typically prefers the pure phase values $\phi = \pm 1$ and may be described as representing a competition between two different energy density terms: the double well density $\frac{1}{4\varepsilon} \left(\phi^2-1\right)^2$ which is minimized by the pure phase values of $\phi$ and the gradient energy density $\frac{\varepsilon}{2} \int_{\Omega} |\nabla \phi |^2$ which penalizes any derivatives of $\phi$.  The natural admissible class for $\phi$ is $H^1(\Omega)$.

%


The interfacial energy associated with the liquid crystal molecules interacting with the surface tension of the interface is given by a weak anchoring energy \cite{deGennes_book1995,Virga_book1994}.  We define the weak anchoring energy as
\begin{equation}
\begin{split}
\Ewan(s,\bn,\phi) &= \frac{\varepsilon}{2} \int_{\Omega} s^2 \left[ |\bn|^2 |\nabla \phi|^2 - \left(\bn \cdot \nabla \phi \right)^2 \right] \, d\bx, \\
\Ewas(s,\phi) &= \frac{\varepsilon}{2} \int_{\Omega} |\nabla \phi|^2 (s(\bx) - s^*)^2 \, d\bx,
\label{eq:energy-anch}
\end{split}
\end{equation}
as in \cite[eqn. (66)]{Mottram_arXiv2014} and where $\varepsilon$ is included to ensure that $\Ewan$ scales the same as $\Echgd$.  The total anchoring energy is then considered to be $E_{anch}(s, \bn, \phi):=\Ewan(s,\bn,\phi) + \Ewas(s,\phi)$. Note that $\Ewan(\bn,\phi)$ tries to force normal anchoring of $\bn$ (with respect to $\nabla \phi$) when minimized.  Planar anchoring can also be considered and is an obvious modification of the method presented here.


Combining the three components produces the total energy
\begin{align}
  \Etot(s,\bn,\phi) =& \, \Werk \Eerk(s,\bn) + \Wdw \Edw(s) + \Wchdw \Echdw(\phi)
  \nonumber
  \\
  &+ \Wchgd \Echgd(\phi) + \Wwan \Ewan(s,\bn,\phi) + \Wwas \Ewas(s,\phi),
  \label{eq:energy-total}
\end{align}
where $\Werk, \Wdw, \Wchdw, \Wchgd, \Wwan, \Wwas > 0$ are constants denoting various ``weights''. The total energy is then described as consisting of a liquid crystal energy (using the Ericksen model), an interfacial energy (using the Cahn-Hilliard model), and an energetic coupling term that connects the two.

\vspace{.1in}

\begin{rmk}[anisotropic surface tension]\label{rm:anisotropic_surface_energy}

Let $\tilde{J}(s,\bn,\phi) := \Ech(\phi) + \Ewan(s,\bn,\phi)$, which has the form:
\begin{align*}
  \tilde{J}(s,\bn,\phi) &= \frac{1}{4\varepsilon} \int_{\Omega} \left(\phi^2-1\right)^2 + \frac{\varepsilon}{2} \int_{\Omega} |\nabla \phi |^2 \, d\bx + \frac{\varepsilon}{2} \int_{\Omega} s^2 \left[ |\bn|^2 |\nabla \phi|^2 - \left(\bn \cdot \nabla \phi \right)^2 \right] \, d\bx \\
  &= \frac{1}{4\varepsilon} \int_{\Omega} \left(\phi^2-1\right)^2 + \frac{\varepsilon}{2} \int_{\Omega}  \nabla \phi \cdot \left[ \bI +s^2\left( \bI -  \bn \otimes \bn \right) \right] \nabla \phi \, d\bx.
\end{align*}
Thus, combining $\Ech$ with $\Ewan$ changes the effective surface tension from isotropic to \emph{anisotropic}. We note that we have taken the weights equal to 1 for simplicity but that this property holds for any weights $\Wwan, \Wchdw, \Wchgd >0$.
\end{rmk}

%
%

\section{Spatial Discretization of the Energy}
\label{sec:energy-discretization}



Assume the domain $\Omega$ is partitioned into a conforming simplicial triangulation $\Tauh = \{ K \}$.  The set of nodes (vertices) of $\Tauh$ is denoted $\Nodeh$ with cardinality $N$.  We further assume the following property on the so-called stiffness matrix entries
\begin{equation}\label{discrete_max_principle}
  k_{ij} := -\int_{\Omega} \nabla \eta_i \cdot \nabla \eta_j \, dx ,
\end{equation}
such that $k_{ij} \geq 0$ for all $i\ne j$ and where $\eta_i$ is the standard ``hat'' basis function associated with node $\bx_i \in \Nodeh$.  This is guaranteed if the mesh is weakly acute \cite{Ciarlet_CMAME1973, Strang_FEMbook2008, Korotov_MC2001, Brandts_LAA2008}.  Note: weak acuteness is guaranteed if all interior angles (dihedral angles in three dimensions) are bounded by $90^\circ$; this corresponds to a \emph{non-obtuse} mesh.


Next, we introduce the following finite element spaces:
\begin{align}
\label{eqn:discrete_spaces}
\begin{split}
  \Yh &:= \{ \phi_h \in H^1(\Omega) : \phi_h |_{K} \text{ is affine for all } K \in \Tauh \}, \\
  \Sph &:= \{ s_h \in H^1(\Omega) : s_h |_{K} \text{ is affine for all } K \in \Tauh \}, \\
  \Uh &:= \{ \bu_h \in H^1(\Omega)^d : \bu_h |_{K} \text{ is affine in
    each component for all } K \in \Tauh \}, \\
  \Nh &:= \{ \bn_h \in \Uh : |\bn_h(\bx_i)| = 1 \text{ for all nodes } \bx_i \in \Nodeh \},\\
  \Vph &:= \{\bv_h \in \Uh: \bv_h(\bx_i) \cdot \bn_h(\bx_i) = 0 \text{ for all nodes } \bx_i \in \Nodeh \},
\end{split}
\end{align}
where $\Nh$ imposes the unit length constraint \emph{at the vertices} of the mesh.  The spaces can be modified to incorporate (Dirichlet) boundary conditions:
\begin{equation}\label{eqn:discrete_spaces_BC}
\begin{split}
  \Sph (\bdys,g_h) &:= \{ s_h \in \Sph : s_h |_{\bdys} = g_h \}, \\
  \Uh (\bdybu,\br_h) &:= \{ \bu_h \in \Uh : \bu_h |_{\bdybu} = \br_h \},
\end{split}
\end{equation}
where $\bdys,\bdybu$ represent subsets of $\partial \Omega$ where Dirichlet conditions are enforced and $g_h = I_hg, \br_h = I_h\br_h$ are the Lagrange interpolations of $(g, \br)$ where $g$ and $\br$ are the traces of some $W^1_\infty(\Omega)$ functions as in \eqref{eq:admis_set_BCs}. With these definitions, we define a discrete admissible class
\begin{align}
\Admerkh(g_h,\br_h) := \{(s_h, \bu_h) \in \Sph (\bdys,g_h) \times \Uh (\bdybu,\br_h) : ~\text{ there exists $\bn_h$ such that \eqref{eq:struct_condition_discrete} holds} \},
\label{eq:admissible-class-discrete}
\end{align}
where
\begin{equation}\label{eq:struct_condition_discrete}
  \buh = I_h(s_h \bnh), \quad - 1/2 < s_h < 1 ~\text{ in } \Omega, \text{ and} \quad \bn_h \in \Nh,
\end{equation}
is the discrete structural condition of $\Admerkh$.  Again, we abuse notation and define $(s_h,\bn_h)$ in $\Admerkh$ to mean $(s_h,\bu_h)$ in $\Admerkh$ with $\bu_h = I_h(s_h \bn_h)$.


The discrete form of the Ericksen energy \eqref{eq:energy-ericksen} is given by \cite{nochetto2017finite}
\begin{equation}
\begin{split}
  \Eerk^h (s_h, \bn_h) := \frac{\kappa}{2} \sum_{i, j = 1}^N k_{ij} \left( s_h(\bx_i) - s_h(\bx_j) \right)^2 + \frac{1}{2} \sum_{i, j = 1}^N k_{ij} \left(\frac{s_h(\bx_i)^2 + s_h(\bx_j)^2}{2}\right) |\bn_h(\bx_i) - \bn_h(\bx_j)|^2,
\end{split}\label{eq:energy-ericksen-discrete}
\end{equation}
for $(s_h,\bu_h) \in \Admerkh(g_h,\br_h)$ where the second term is a first order approximation of $\int_{\Omega} s^2 |\nabla \bn|^2 \, d\bx$.  Note that it can be shown that the first term \emph{equals} $\kappa \int_{\Omega} |\nabla s_h|^2 \, d\bx$.  The discrete energy satisfies a coercivity estimate \cite[Lemma 3.5]{nochetto2017finite} which we now summarize.
\begin{lem}\label{lem:coercivity}
For any $(s_h,\bn_h) \in \Admerkh$, we have
\begin{align*}
  \Eerk^h (s_h, \bnh) \geq &\; \min\{\kappa, 1\}
  \max \left\{\int_{\Omega} |\nabla \buh|^2 dx,
  \int_{\Omega} |\nabla s_h|^2 \, d\bx \right\}, ~ \text{ where } \buh = I_h(s_h \bnh).
\end{align*}
\end{lem}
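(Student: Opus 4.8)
The plan is to prove the two lower bounds separately and then combine them, exploiting throughout the sign condition $k_{ij}\geq 0$ for $i\neq j$ coming from \eqref{discrete_max_principle}. Since $(s_h(\bx_i)-s_h(\bx_j))^2\geq 0$ and $|\bnh(\bx_i)-\bnh(\bx_j)|^2\geq 0$, every summand in \eqref{eq:energy-ericksen-discrete} is nonnegative, so either half of the energy may be discarded freely. The bound involving $\int_{\Omega}|\nabla s_h|^2$ is then essentially immediate: dropping the (nonnegative) $\bnh$-term and using the identity that the first term equals $\kappa\int_{\Omega}|\nabla s_h|^2$ (noted just after \eqref{eq:energy-ericksen-discrete}) gives $\Eerk^h\geq\kappa\int_{\Omega}|\nabla s_h|^2\geq\min\{\kappa,1\}\int_{\Omega}|\nabla s_h|^2$. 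The real content is therefore the bound by $\int_{\Omega}|\nabla\buh|^2$.

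For that bound I would first record the ``graph-Laplacian'' identity underlying the $s_h$ computation: for any scalar affine $w_h$ with nodal values $w_i$, the fact that $\sum_j k_{ij}=0$ (the hat functions sum to the constant $1$, whose gradient vanishes) yields $\int_{\Omega}|\nabla w_h|^2=\tfrac12\sum_{i,j}k_{ij}(w_i-w_j)^2$. Applying this componentwise to $\buh$ and using $\buh(\bx_i)=I_h(s_h\bnh)(\bx_i)=s_h(\bx_i)\bnh(\bx_i)$ from \eqref{eq:struct_condition_discrete}, and writing $s_i:=s_h(\bx_i)$, $\bn_i:=\bnh(\bx_i)$, gives $\int_{\Omega}|\nabla\buh|^2=\tfrac12\sum_{i,j}k_{ij}|s_i\bn_i-s_j\bn_j|^2$. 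The key algebraic step is to expand this using the nodal unit-length constraint $|\bn_i|=|\bn_j|=1$ defining $\Nh$: substituting $\bn_i\cdot\bn_j=1-\tfrac12|\bn_i-\bn_j|^2$ produces the exact splitting
\begin{equation*}
  |s_i\bn_i-s_j\bn_j|^2=(s_i-s_j)^2+s_is_j\,|\bn_i-\bn_j|^2 .
\end{equation*}

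With this in hand the estimate reduces to a term-by-term comparison for each pair $(i,j)$. After factoring out $\tfrac12 k_{ij}\geq 0$, it suffices to check two scalar inequalities: $\kappa\geq\min\{\kappa,1\}$ for the coefficient of $(s_i-s_j)^2$, and $\tfrac12(s_i^2+s_j^2)\geq\min\{\kappa,1\}\,s_is_j$ for the coefficient of $|\bn_i-\bn_j|^2$. I expect the second to be the only place requiring care, since $s_i,s_j$ may have opposite signs; the clean way around this is to note that when $s_is_j\leq 0$ the right-hand side is nonpositive and the inequality is trivial, while when $s_is_j\geq 0$ one has $\tfrac12(s_i^2+s_j^2)\geq s_is_j\geq\min\{\kappa,1\}\,s_is_j$ by AM--GM together with $\min\{\kappa,1\}\leq 1$. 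Multiplying each pairwise inequality by $\tfrac12 k_{ij}\geq 0$ and summing over $i,j$ yields $\Eerk^h(s_h,\bnh)\geq\min\{\kappa,1\}\int_{\Omega}|\nabla\buh|^2$; taking the maximum with the earlier $s_h$-bound finishes the proof. The crucial ingredients are the sign hypothesis on $k_{ij}$, which permits the argument to be carried out edge-by-edge, and the nodal unit-length constraint, which produces the exact splitting above; without the latter the cross term would fail to reduce to $s_is_j|\bn_i-\bn_j|^2$ and the clean comparison would break down.
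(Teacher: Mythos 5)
Your proof is correct. Note that the paper itself does not prove this lemma at all --- it is quoted as a summary of \cite[Lemma 3.5]{nochetto2017finite} --- so your argument supplies the missing details, and it is essentially the argument used in that reference: the graph-Laplacian identity $\int_\Omega |\nabla w_h|^2 = \tfrac12\sum_{i,j}k_{ij}(w_i-w_j)^2$ (valid since $\sum_j k_{ij}=0$), the nodal expansion $|s_i\bn_i-s_j\bn_j|^2=(s_i-s_j)^2+s_is_j|\bn_i-\bn_j|^2$ using $|\bn_i|=|\bn_j|=1$, and the edge-by-edge comparison $\tfrac12(s_i^2+s_j^2)\geq s_is_j$, which is exactly where the weak-acuteness condition $k_{ij}\geq 0$ ($i\neq j$) is needed.
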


The form of \eqref{eq:energy-ericksen-discrete} is able to account for the degeneracy in $s_h$ in the limit as $h \rightarrow 0$ without regularization.  Indeed, in \cite{nochetto2017finite}, they proved a $\Gamma$-convergence result for \eqref{eq:energy-ericksen-discrete}, i.e.
\begin{equation}\label{eqn:ericksen-gamma-convergence}
  \Gamma - \lim_{h \rightarrow 0} \Eerk^h (s_h, \bn_h) = \Eerk (s, \bn).
\end{equation}

The Ericksen double well energy, the Cahn-Hilliard energy, and anchoring energy $\Ewas$ are discretized in the standard way:
\begin{align}
  \Edw^h (s_h) &:= \int_{\Omega} \dwfunc (s_h(\bx)) \, d\bx,
  \label{eq:energy-ericksen-dw-discrete}
  \\
  \Ech^h (\phih) &:= \int_{\Omega} \frac{1}{4\varepsilon} \left(\phih^2-1\right)^2 + \frac{\varepsilon}{2} \int_{\Omega} |\nabla \phih |^2 \, d\bx,
  \label{eq:energy-ch-discrete}
  \\
\Ewas^h(s_h,\phih) &= \frac{\varepsilon}{2} \int_{\Omega} |\nabla \phih|^2 (s_h(\bx) - s^*)^2 \, d\bx.
  \label{eq:energy-a,s-anch-discrete}
\end{align}
Finally, the discrete version of the weak anchoring term $\Ewan$ is given by
\begin{equation}
\Ewan(s_h,\bnh,\phih) =\sum_{T_j \subset \Tauh} \int_{T_j} I_h \left\{ (s_h)^2 \, \bnh \cdot [ (\nabla \phih \cdot \nabla \phih) \mathbf{I} - (\nabla \phih \otimes \nabla \phih) ]\bnh \right\},
\label{eq:energy-a,n-anch-discrete}
\end{equation}
where $I_h$ is the Lagrange interpolant. We note that a more detailed definition of the discretization of $\Ewan$ is given in section \ref{sec:gradient-flow}.

The (total) discrete energy is then
\begin{align}
  \Etot^h(s_h,\bnh,\phih) =& \, \Werk \Eerk^h(s_h,\bnh) + \Wdw \Edw^h(s_h) + \Wchdw \Echdw^h(\phih)
  \nonumber
  \\
  &+ \Wchgd \Echgd^h(\phih) + \Wwan \Ewan^h(s_h,\bnh,\phih) + \Wwas \Ewas^h(s_h,\phih).
  \label{eq:energy-total-discrete}
\end{align}
The discretization of time will follow a gradient flow strategy with respect to the total discrete energy \eqref{eq:energy-total-discrete}.

\section{Fully Discrete Gradient Flow Strategy}
\label{sec:gradient-flow}


We use the notation $\iprd{\cdot}{\cdot} : L^2(\Omega) \times L^2(\Omega) \rightarrow \R$ as the standard $L^2$ inner product and the notation $\aiprd{\cdot}{\cdot} : H^1(\Omega) \times H^1(\Omega) \rightarrow \R$ as the $H^1$ inner product such that each may be applied to both scalar and vector valued functions as follows:
\begin{equation*}
  \iprd{u}{v} = \int_{\Omega}  u  v\, d\bx, \quad \iprd{\bn}{\bw} = \int_{\Omega} \bn \cdot \bw\, d\bx, \quad \aiprd{u}{v} = \int_{\Omega} \nabla u \cdot \nabla v\, d\bx, \quad \aiprd{\bn}{\bw} = \int_{\Omega} \nabla \bn : \nabla \bw\, d\bx.
\end{equation*}

Next, we define a multi-linear form representing the discrete Ericksen's energy $\Eerk^h$, as well as its variational derivatives.  Specifically, we define $\eform{\cdot}{\cdot}{\cdot}{\cdot}: \Sph \times \Sph \times \Uh \times \Uh \rightarrow \R$ by
\begin{equation}
  \eform{s_h}{z_h}{\bn_h}{\bw_h} := \sum_{i, j = 1}^N k_{ij} \left(\frac{s_h(\bx_i) z_h(\bx_i) + s_h(\bx_j) z_h(\bx_j)}{2}\right) \left( \bn_h(\bx_i) - \bn_h(\bx_j) \right) \cdot \left( \bw_h(\bx_i) - \bw_h(\bx_j) \right),
  \label{eq:multi-linear-form-ericksen-discrete}
\end{equation}
which is linear in each argument, and note that
\begin{equation*}
  \Eerk^h(s_h,\bn_h) = \kappa \, \aiprd{ s_h }{z_h} + \frac{1}{2} \eform{s_h}{s_h}{\bn_h}{\bn_h}.
\end{equation*}
Furthermore, taking variational derivatives with respect to both $s_h$ and $\bnh$, we have
\begin{align}
  \delta_{\bnh} \Eerk^h [s_h, \bnh ; \bw_h] &= \eform{s_h}{s_h}{\bnh}{\bw_h}, \\
 \delta_{s_h} \Eerk^h [s_h ,\bn_h ; z_h] &= 2  \kappa \, \aiprd{ s_h }{z_h}  + \eform{s_h}{z_h}{\bn_h}{\bn_h}.
  \label{eq:discrete-ericksen-var-deriv}
\end{align}
Additionally, the variational derivative with respect to $s_h$ of the Ericksen double well energy is
\begin{align}
\delta_{s_h} \Edw^h ( s_h; z_h ) = \int_{\Omega} f^\prime(s_h) z_h \, d\bx,
\label{eq:discrete-ericksen-dw-var-deriv}
\end{align}
and the variational derivative with respect to $\phih$ of the Cahn-Hilliard energy is given by
\begin{align}
\delta_{\phih} \Ech^h (\phih; \psih) = \int_{\Omega} \frac{1}{\varepsilon} (\phih^3 - \phih) \psih \, d\bx + \varepsilon \, \aiprd{\phih}{\psih}.
\end{align}

Finally, we define a discrete inner product to capture the discrete coupling energy $\Ewan(s,\bn,\phi)$ in \eqref{eq:energy-anch}, as well as its variational derivatives.  Define the multi-linear form $\cform{\cdot}{\cdot}{\cdot}{\cdot}{\cdot}{\cdot}: \Uh \times P_0 \times \Uh \times P_0 \times \Sh \times \Sh \rightarrow \R$, where $P_0$ is the space of piecewise constant, vector-valued functions such that
\begin{equation}\label{eq:discrete_inner_prod_coupling}
\begin{split}
  \cform{\bv_h}{\nabla \phih}{\bwh}{\nabla \psih}{s_h}{z_h} &:= \\
  \sum_{T_j \subset \Tauh} |T_j| \frac{1}{d+1} &\sum_{i = 1}^{d+1} \left[ s_h z_h \Big((\nabla \phih \cdot \nabla \psih) (\bvh \cdot \bwh) - (\bvh \cdot \nabla \phih)(\bwh \cdot \nabla \psih)\Big) \Big{|}_{T_j} (\hat{\bx}^j_i) \right],
\end{split}
\end{equation}
where $\{ \hat{x}^j_i \}_{i = 1}^{d+1}$ are the vertices of the element $T_j$ in the mesh $\Tauh$; note that we restrict $\nabla \phih$, $\nabla \psih$ to $T_j$ before evaluating at $\bx = \hat{\bx}^j_i$.  Equation \eqref{eq:discrete_inner_prod_coupling} can also be written as
\begin{equation}\label{eq:discrete_inner_prod_coupling_alt}
 \cform{\bv_h}{\nabla \phih}{\bwh}{\nabla \psih}{s_h}{z_h} := \sum_{T_j \subset \Tauh} \int_{T_j} I_h \left\{ (s_h z_h) \bvh \cdot [ (\nabla \phih \cdot \nabla \psih) \mathbf{I} - (\nabla \phih \otimes \nabla \psih) ]\bwh \right\},
\end{equation}
where $I_h$ is the Lagrange interpolant; this follows because the formula in \eqref{eq:discrete_inner_prod_coupling} can be viewed as a quadrature rule that is exact for linear polynomials over each element $T_j$.  The finite element realization of \eqref{eq:discrete_inner_prod_coupling} is a $d \times d$ block matrix, where each block is an $N \times N$ \emph{diagonal} matrix.

Considering these definitions, the discrete anchoring condition can be written as
\begin{align}
\Ewan^h(\bn_h,\phi_h,s_h) &= \frac{\varepsilon}{2} \, \cform{\bnh}{\nabla \phih}{\bnh}{\nabla \phih}{s_h}{s_h}, \\
\Ewas^h(s_h,\phi_h) &= \frac{\varepsilon}{2} \iprd{ \nabla \phi_h (s_h(\bx) - s^*)}{ \nabla \phi_h (s_h(\bx) - s^*)},
\label{eq:energy-anch-discrete}
\end{align}
with the following variational derivatives
\begin{align*}
\delta_{\bnh} \Ewan^h (s_h,\bnh,\phih;\bw_h) &= \varepsilon \, \cform{\bnh}{\nabla \phih}{\bw_h}{\nabla \phih}{s_h}{s_h},
 \\
\delta_{s_h} \Ewan^h (s_h,\bnh,\phih;z_h) &= \varepsilon \,\cform{\bnh}{\nabla \phih}{\bnh}{\nabla \phih}{s_h}{z_h},
\\
\delta_{\phih} \Ewan^h (s_h,\bnh,\phih;\psi_h) &= \varepsilon \, \cform{\bnh}{\nabla \phih}{\bnh}{\nabla \psih}{s_h}{s_h},
\\
\delta_{s_h} \Ewas^h(s_h,\phih;z_h) &= \varepsilon \,\iprd{ \nabla \phi_h (s_h - s^*)}{ \nabla \phi_h z_h },
\\
\delta_{\phih} \Ewas^h(s_h,\phih;\psi_h) &= \varepsilon \, \iprd{ \nabla \phi_h (s_h - s^*)}{ \nabla \psi_h (s_h - s^*)}.
\end{align*}

An important advantage of the inner products $\eform{\cdot}{\cdot}{\cdot}{\cdot}$ and $\cform{\cdot}{\cdot}{\cdot}{\cdot}{\cdot}{\cdot}$ is that they both satisfy a projection property with respect to $\bn_h$. Specifically, we have the following lemma.
\vspace{.1in}

\begin{lem}
\label{lem:monotone_anchoring}
Let $\eform{\cdot}{\cdot}{\cdot}{\cdot}$ be defined by \eqref{eq:multi-linear-form-ericksen-discrete} and $\cform{\cdot}{\cdot}{\cdot}{\cdot}{\cdot}{\cdot}$ be defined by \eqref{eq:discrete_inner_prod_coupling_alt}. If $|\bn_h(\bx_i)| \geq 1$ at all nodes $\bx_i$ in $\Nodeh$, then
\begin{align}
 \eform{s_h}{s_h}{\bnh}{\bnh} &\ge \eform{s_h}{s_h}{\frac{\bnh}{|\bnh|}}{\frac{\bnh}{|\bnh|}},
 \label{eq:eform-monotone-prop}
   \\
\cform{\bnh}{\nabla \phih}{\bnh}{\nabla \phih}{s_h}{s_h} &\ge \cform{\frac{\bnh}{|\bnh|}}{\nabla \phih}{\frac{\bnh}{|\bnh|}}{\nabla \phih}{s_h}{s_h}.
 \label{eq:cform-monotone-prop}
\end{align}
\end{lem}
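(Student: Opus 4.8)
The plan is to reduce each inequality to a term-by-term comparison in which every prefactor is manifestly nonnegative, and then to verify a single scalar inequality inside each term. A useful preliminary observation is that both $\eform{\cdot}{\cdot}{\cdot}{\cdot}$ and $\cform{\cdot}{\cdot}{\cdot}{\cdot}{\cdot}{\cdot}$ depend on $\bnh$ only through its nodal values, so the normalized argument $\bnh/|\bnh|$ is understood as the function with nodal values $\bnh(\bx_i)/|\bnh(\bx_i)|$, and the comparison is genuinely pointwise at the nodes.

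For \eqref{eq:eform-monotone-prop}, I would first set $z_h = s_h$ and $\bw_h = \bnh$ in \eqref{eq:multi-linear-form-ericksen-discrete}, collapsing the form to
\[
\eform{s_h}{s_h}{\bnh}{\bnh} = \sum_{i,j=1}^N k_{ij}\,\frac{s_h(\bx_i)^2 + s_h(\bx_j)^2}{2}\,\left|\bnh(\bx_i) - \bnh(\bx_j)\right|^2 .
\]
The diagonal terms ($i=j$) vanish, and for $i \neq j$ the weight $k_{ij}\,\hf\big(s_h(\bx_i)^2 + s_h(\bx_j)^2\big)$ is nonnegative by the weak-acuteness assumption $k_{ij}\ge 0$. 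Hence it suffices to establish the pointwise vector inequality: for any $\mathbf{p},\mathbf{q}\in\R^d$ with $|\mathbf{p}|,|\mathbf{q}|\ge 1$,
\[
|\mathbf{p}-\mathbf{q}|^2 \;\ge\; \left|\frac{\mathbf{p}}{|\mathbf{p}|}-\frac{\mathbf{q}}{|\mathbf{q}|}\right|^2 ,
\]
applied with $\mathbf{p}=\bnh(\bx_i)$, $\mathbf{q}=\bnh(\bx_j)$. Writing $a=|\mathbf{p}|$, $b=|\mathbf{q}|$ and letting $\theta$ be the angle between the two vectors, the difference of the two sides equals $(a^2+b^2-2)-2(ab-1)\cos\theta$; this is nonnegative because when $\cos\theta\le 0$ both groupings are nonnegative, whereas when $\cos\theta>0$ one bounds $a^2+b^2-2\ge 2(ab-1)$ by AM--GM and factors out $2(ab-1)(1-\cos\theta)\ge 0$. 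This is exactly the statement that radial projection onto the unit sphere is $1$-Lipschitz on $\{|\cdot|\ge 1\}$, and it is the key geometric input.

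For \eqref{eq:cform-monotone-prop} I would use the quadrature representation \eqref{eq:discrete_inner_prod_coupling}. On each element $T_j$ the gradient $\nabla\phih|_{T_j}=:\mathbf{g}$ is a constant vector, so the form becomes
\[
\cform{\bnh}{\nabla\phih}{\bnh}{\nabla\phih}{s_h}{s_h}=\sum_{T_j\subset\Tauh}\frac{|T_j|}{d+1}\sum_{i=1}^{d+1} s_h(\hat{\bx}^j_i)^2\left[\,|\mathbf{g}|^2\,|\bnh(\hat{\bx}^j_i)|^2-\big(\bnh(\hat{\bx}^j_i)\cdot\mathbf{g}\big)^2\right].
\]
Again the prefactors $\tfrac{|T_j|}{d+1}\,s_h(\hat{\bx}^j_i)^2$ are nonnegative, so it suffices to compare the bracketed quantity for $\mathbf{n}:=\bnh(\hat{\bx}^j_i)$ with that for $\mathbf{n}/|\mathbf{n}|$. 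Decomposing $\mathbf{n}$ into components parallel and orthogonal to $\mathbf{g}$ yields the identity $|\mathbf{g}|^2|\mathbf{n}|^2-(\mathbf{n}\cdot\mathbf{g})^2=|\mathbf{g}|^2|\mathbf{n}_\perp|^2$; normalizing $\mathbf{n}$ scales $\mathbf{n}_\perp$ by $1/|\mathbf{n}|$, so the normalized bracket equals $|\mathbf{g}|^2|\mathbf{n}_\perp|^2/|\mathbf{n}|^2$. Since $|\mathbf{n}|=|\bnh(\hat{\bx}^j_i)|\ge 1$, the full bracket dominates the normalized one, and summing against the nonnegative weights gives \eqref{eq:cform-monotone-prop}.

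The only genuinely nontrivial step is the scalar vector inequality underlying \eqref{eq:eform-monotone-prop} (the contraction property of radial projection); the remaining work is bookkeeping, namely checking that the diagonal terms drop out, that the weights are nonnegative (which is precisely where weak acuteness enters), and that the element-wise factorization for the $\cform$ term is legitimate because $\nabla\phih$ is piecewise constant. I expect the sign analysis of the scalar inequality to be the main obstacle, while the $\cform$ estimate is essentially immediate once the integrand is recognized as $|\mathbf{g}|^2|\mathbf{n}_\perp|^2$.
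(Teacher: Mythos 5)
Your proof is correct, and it differs from the paper's in an instructive way. For \eqref{eq:cform-monotone-prop} you and the paper use the same mechanism: the paper abstracts it into Proposition \ref{prop:monotone_lumped_mass}, a monotonicity property of lumped-mass forms $m_h(\bn_h,\bw_h)=\sum_{T_j}\int_{T_j}I_h[\bn_h\cdot H\bw_h]$ for \emph{any} symmetric positive semi-definite $H$, obtained by writing $\bn\cdot H\bn=|\bn|^2\,(\bn/|\bn|)\cdot H(\bn/|\bn|)$ at each quadrature node and using $|\bn|\ge 1$, and then takes $H=s_h^2[(\nabla\phih\cdot\nabla\phih)\mathbf{I}-\nabla\phih\otimes\nabla\phih]$. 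Your element-by-element computation with the identity $|\mathbf{g}|^2|\mathbf{n}|^2-(\mathbf{n}\cdot\mathbf{g})^2=|\mathbf{g}|^2|\mathbf{n}_\perp|^2$ is exactly that argument with the positive semi-definiteness of $H$ verified by hand; the paper's formulation buys reusability, yours buys concreteness, but the idea is identical. The genuine divergence is \eqref{eq:eform-monotone-prop}: the paper gives no proof at all, citing \cite{nochetto2017finite}, whereas you prove it from scratch by reducing, via $k_{ij}\ge 0$ for $i\ne j$ (weak acuteness) and nonnegativity of the weights $\tfrac{1}{2}\bigl(s_h(\bx_i)^2+s_h(\bx_j)^2\bigr)$, to the scalar statement that radial projection onto the unit sphere is $1$-Lipschitz on $\{|\cdot|\ge 1\}$, i.e. $|\mathbf{p}-\mathbf{q}|^2\ge\left|\mathbf{p}/|\mathbf{p}|-\mathbf{q}/|\mathbf{q}|\right|^2$ whenever $|\mathbf{p}|,|\mathbf{q}|\ge 1$. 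Your two-case verification of $(a^2+b^2-2)-2(ab-1)\cos\theta\ge 0$ is valid: for $\cos\theta\le 0$ both groupings are nonnegative, and for $\cos\theta>0$ the bound $a^2+b^2-2\ge 2(ab-1)$ yields the factor $2(ab-1)(1-\cos\theta)\ge 0$. You also correctly note the two bookkeeping points that make the reduction legitimate (diagonal terms vanish, and the normalized argument is interpreted nodally, which is all the forms ever see). So your write-up is self-contained where the paper defers to a reference, and coincides with the paper where the paper argues directly.
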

\begin{proof}
The proof of \eqref{eq:eform-monotone-prop} may be found in \cite{nochetto2017finite}. The proof of \eqref{eq:cform-monotone-prop} follows from Proposition \ref{prop:monotone_lumped_mass} (shown below) and the fact that $\Ewan^h(s_h,\bn_h,\phi_h) = m_h(\bn_h, \bn_h)$ with $H(\bx) = (s_h)^2 [ (\nabla \phih \cdot \nabla \phih) \mathbf{I} - (\nabla \phih \otimes \nabla \phih) ]$.
\end{proof}

\vspace{.1in}
\begin{prop}[monotone property for lumped mass matrix]\label{prop:monotone_lumped_mass}
Let $m_h : \Nh \times \Nh \rightarrow \R$ be a bilinear form defined by
\begin{equation*}
  m_h(\bn_h, \bw_h) = \sum_{T_j \subset \Tauh} \int_{T_j} I_h [ \bn_h \cdot H(\bx) \bw_h ] \, d\bx,
\end{equation*}
where $H$ is a $d \times d$ symmetric positive semi-definite matrix, that is piecewise discontinuous across boundaries of mesh elements but smooth inside each element.  If $|\bn_h(\bx_i)| \geq 1$ at all nodes $\bx_i$ in $\Nodeh$, then
\begin{equation*}
  m_h(\bn_h, \bn_h) \geq m_h \left( \frac{\bn_h}{|\bn_h|}, \frac{\bn_h}{|\bn_h|} \right).
\end{equation*}
\end{prop}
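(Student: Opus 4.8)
The plan is to exploit the mass-lumping structure of $m_h$, which decouples the quadratic form into a weighted sum of \emph{pointwise} quadratic forms in the nodal values of $\bnh$. First I would rewrite the element integral using that $I_h[\bnh \cdot H \bwh]$ is affine on each simplex $T_j$, so its integral equals the volume times the average of its vertex values:
\begin{equation*}
\int_{T_j} I_h[\bnh \cdot H \bwh] \, d\bx = \frac{|T_j|}{d+1} \sum_{i=1}^{d+1} \bnh(\hat{\bx}^j_i) \cdot H(\hat{\bx}^j_i)\, \bwh(\hat{\bx}^j_i),
\end{equation*}
where $H(\hat{\bx}^j_i)$ is the value taken from within $T_j$ (recall $H$ may jump across element boundaries). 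Summing over elements then expresses $m_h(\bnh,\bnh)$ as a sum over element–vertex pairs $(T_j,\hat{\bx}^j_i)$ of the terms $\frac{|T_j|}{d+1}\,\bnh(\hat{\bx}^j_i) \cdot H(\hat{\bx}^j_i)\, \bnh(\hat{\bx}^j_i)$, with no cross coupling between distinct nodes.

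The key step is a pointwise comparison at each vertex. Fix a pair $(T_j,\hat{\bx}^j_i)$ and write $\bv := \bnh(\hat{\bx}^j_i)$ and $M := H(\hat{\bx}^j_i)$, a symmetric positive semi-definite matrix. Because $M$ is positive semi-definite, $\bv \cdot M \bv \geq 0$; and by hypothesis $|\bv| = |\bnh(\hat{\bx}^j_i)| \geq 1$, so $|\bv|^{-2} \leq 1$. Hence
\begin{equation*}
\bv \cdot M \bv \;\geq\; \frac{1}{|\bv|^2}\, \bv \cdot M \bv \;=\; \frac{\bv}{|\bv|} \cdot M\, \frac{\bv}{|\bv|}.
\end{equation*}
Since the normalized field $\bnh/|\bnh|$ has nodal values $\bnh(\hat{\bx}^j_i)/|\bnh(\hat{\bx}^j_i)|$, the right-hand side is exactly the corresponding vertex contribution to $m_h(\bnh/|\bnh|,\bnh/|\bnh|)$.

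Finally I would multiply each pointwise inequality by the nonnegative weight $|T_j|/(d+1)$ and sum over all element–vertex pairs; since every weight is nonnegative the inequality is preserved termwise, yielding $m_h(\bnh,\bnh) \geq m_h(\bnh/|\bnh|,\bnh/|\bnh|)$, as claimed. The only genuinely delicate point is the reduction in the first step: one must verify that, after applying $I_h$, the integrand is affine so that the vertex quadrature is exact, and that the correct one-sided value of the discontinuous $H$ is used inside each $T_j$. Once the form is written as a diagonal (lumped) sum of pointwise quadratic forms, the argument rests only on the positive semi-definiteness of $H$ together with $|\bnh(\bx_i)| \geq 1$, and no cross terms between distinct nodes ever appear --- it is precisely the lumping that makes the termwise comparison legitimate.
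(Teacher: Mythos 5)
Your proposal is correct and follows essentially the same argument as the paper's proof: both rewrite $m_h$ via the vertex-quadrature (mass-lumping) identity as a weighted sum of decoupled pointwise quadratic forms, and then compare termwise using the positive semi-definiteness of $H$ together with $|\bn_h(\bx_i)| \geq 1$ (your inequality $\bv \cdot M \bv \geq |\bv|^{-2}\, \bv \cdot M \bv$ is just an algebraic rearrangement of the paper's step of factoring out $|\bn_h(\hat{\bx}^j_i)|^2 \geq 1$). No gaps; nothing further is needed.
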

\begin{proof}
Rewrite $m_h(\bn_h, \bw_h)$ as
\begin{equation*}
   m_h(\bn_h, \bw_h) := \sum_{T_j \subset \Tauh} |T_j| \frac{1}{d+1} \sum_{i = 1}^{d+1} \left[ \bn_h(\hat{\bx}^j_i) \cdot H(\hat{\bx}^j_i) \bw_h(\hat{\bx}^j_i) \right].
\end{equation*}
Then, clearly,
\begin{align*}
  m_h(\bn_h, \bn_h) &= \sum_{T_j \subset \Tauh} |T_j| \frac{1}{d+1} \sum_{i = 1}^{d+1} |\bn_h(\hat{\bx}^j_i)|^2 \left[ \frac{\bn_h(\hat{\bx}^j_i)}{|\bn_h(\hat{\bx}^j_i)|} \cdot H(\hat{\bx}^j_i) \frac{\bn_h(\hat{\bx}^j_i)}{|\bn_h(\hat{\bx}^j_i)|} \right] \\
  &\geq \sum_{T_j \subset \Tauh} |T_j| \frac{1}{d+1} \sum_{i = 1}^{d+1} \left[ \frac{\bn_h(\hat{\bx}^j_i)}{|\bn_h(\hat{\bx}^j_i)|} \cdot H(\hat{\bx}^j_i) \frac{\bn_h(\hat{\bx}^j_i)}{|\bn_h(\hat{\bx}^j_i)|} \right] = m_h \left( \frac{\bn_h}{|\bn_h|}, \frac{\bn_h}{|\bn_h|} \right).
\end{align*}
\end{proof}

\section{A Fully Discrete Numerical Scheme}
\label{sec:fully-discrete-scheme}

To set up the numerical scheme presented below, we utilize an $L^2$ gradient flow strategy with respect to the director field and the orientation parameters and an $H^{-1}$ gradient flow strategy with respect to the phase field parameter.  We note that in order to guarantee energy stability, the time discretization is not solely based on a backward Euler method.  Specifically, we use two different convex splittings for the two double well potentials and the anchoring (coupling) terms must be handled appropriately.

\subsection{Scheme}

Let $M$ be a positive integer and $0=t_0 < t_1 < \cdots < t_M = T$ be a uniform partition of $[0,T]$, with $\tau = t_i-t_{i-1}$, $i=1,\ldots ,M$. The fully discrete, finite element scheme is as follows: for any $1\le m\le M$, given  $s_h^{m-1} \in \Sph, \bnh^{m-1}\in \Nh$, and $\phih^{m-1} \in \Yh$, find $s_h^{m} \in \Sph, \bnh^m \in \Nh, \phih^m \in \Yh$, and $\muh^m\in \Yh$,  such that
	\begin{subequations}
	\begin{align}
\rho \, \iprd{\bvh^{m}}{\bwh} + \Werk\, \eform{s^{m-1}_h}{s^{m-1}_h}{\bntilh^{m}}{\bwh}
\nonumber
\\
+ \Wwan \, \varepsilon \, \cform{\bntilh^{m}}{\nabla \phih^{m-1}}{\bwh}{ \nabla \phih^{m-1}}{s^{m-1}_h}{s^{m-1}_h} &= \, 0, & \forall \, \bwh \in \Vph,
	\label{eq:cherk-scheme-bn-discrete}
	\\
\iprd{\dtau s_h^{m}}{z_h} + \Werk \left[ 2 \, \kappa \, \aiprd{s_h^{m}}{z_h} + \eform{s_h^m}{z_h}{\bnh^{m}}{\bnh^{m}} \right] + \Wdw \, \delta_{s_h} \Edw^h ( s_h^{m}; z_h )  \qquad & \nonumber \\
+ \Wwas \, \varepsilon \, \iprd{(s_h^{m} - s^*) \nabla \phih^{m-1}}{z_h \nabla \phih^{m-1}} + \Wwan \, \varepsilon \, \cform{\bnh^m}{\nabla \phih^{m-1}}{\bnh^m}{\nabla \phih^{m-1}}{\shave^m}{z_h} &= \, 0, & \forall \, z_h \in \Sph,
	\label{eq:cherk-scheme-s-discrete}
	\\
\iprd{\dtau \phih^{m}}{\nuh} + \varepsilon \, \aiprd{\muh^{m}}{\nuh} &= \, 0,  & \forall \, \nuh \in \Yh ,
	\label{eq:cherk-scheme-phi-discrete}
	\\
\Wchdw \, \varepsilon^{-1} \,\iprd{\left(\phih^{m}\right)^3 - \phih^{m-1}}{\psih} + \Wchgd \, \varepsilon \,\aiprd{\phih^{m}}{\psih} - \iprd{\muh^{m}}{\psih}  \qquad & \nonumber \\
 + \Wwan \, \varepsilon \, \cform{\bn_h^{m}}{ \nabla \phih^{m}}{\bn_h^{m}}{\nabla \psih}{s^m_h}{s^m_h} + \Wwas \, \varepsilon \, \iprd{(s^{m}_h - s^*) \nabla \phih^{m}}{(s^{m}_h - s^*) \nabla \psih} &= \, 0, & \forall \, \psih \in \Yh,
	\label{eq:cherk-scheme-mu-discrete}
	\end{align}
	\end{subequations}
where $\Vph = \Vph(\bnh^{m-1})$, $\rho > 0$ is a constant, and
\begin{align*}
    \dtau s_h^{m} &:= \frac{s_h^{m} - s_h^{m-1}}{\tau}, \quad \dtau \phih^{m} := \frac{\phih^{m} - \phih^{m-1}}{\tau},
    \\
     \shave^m &:= \frac{s_h^{m}+s^{m-1}_h}{2}, \quad \delta_{s_h} \Edw^h ( s_h^{m}; z_h ) := \int_{\Omega} \left[ f_c'(s_h^{m}) - f_e'(s_h^{m-1}) \right] z_h \, d\bx,
      \\
    \bvh^m &= \dtau \bntilh^m := \frac{\bntilh^{m} - \bnh^{m-1}}{\tau}, \quad \text{and }  \bnh^m (\bx_i) := \frac{\bntilh^m (\bx_i)}{|\bntilh^m (\bx_i) |} \text{ at the nodes } \bx_i,
\end{align*}
such that $\dwfunc_c$, $\dwfunc_e$ are convex functions for all $s \in (-1/2, 1)$ and $\dwfunc (s) = \dwfunc_c(s) - \dwfunc_e(s)$. We note that the order of the method is to first solve \eqref{eq:cherk-scheme-bn-discrete}, normalize to compute $\bnh^m$ and solve \eqref{eq:cherk-scheme-s-discrete}, then solve \eqref{eq:cherk-scheme-phi-discrete} and \eqref{eq:cherk-scheme-mu-discrete}.


Due to the fact that equations \eqref{eq:cherk-scheme-bn-discrete}--\eqref{eq:cherk-scheme-s-discrete} are essentially uncoupled from equations \eqref{eq:cherk-scheme-phi-discrete}--\eqref{eq:cherk-scheme-mu-discrete}, then following similar arguments to what are given in \cite{nochetto2017finite} and \cite{diegel15}, we have the following theorem, which we state without proof:
\vspace{.1in}

	\begin{thm}
For any $1\le m\le M$, the fully discrete scheme \eqref{eq:cherk-scheme-bn-discrete}--\eqref{eq:cherk-scheme-mu-discrete} is uniquely solvable and mass conservative, \emph{i.e.}, $\iprd{\phi_h^m-\phi^0}{1}{} = 0$.
	\end{thm}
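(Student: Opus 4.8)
The plan is to exploit the sequential, essentially decoupled structure of the scheme: \eqref{eq:cherk-scheme-bn-discrete} determines $\bntilh^m$, the nodal normalization yields $\bnh^m$, then \eqref{eq:cherk-scheme-s-discrete} determines $s_h^m$, and finally \eqref{eq:cherk-scheme-phi-discrete}--\eqref{eq:cherk-scheme-mu-discrete} determine the pair $(\phih^m,\muh^m)$. At each stage every quantity carrying the superscript $m-1$ (and, once computed, $\bnh^m$ and $s_h^m$) is \emph{data}, so each sub-solve is an independent problem that I would settle by a coercivity/convexity argument.

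First I would treat \eqref{eq:cherk-scheme-bn-discrete}. Seeking the update $\bvh^m = \dtau\bntilh^m$ in the tangent space $\Vph=\Vph(\bnh^{m-1})$ and writing $\bntilh^m = \bnh^{m-1} + \tau\bvh^m$, the equation becomes a linear problem for $\bvh^m\in\Vph$ whose bilinear form is $(\rho/\tau)\iprd{\bvh^m}{\bwh}$ plus the director-slot forms $\Werk\,\eform{s_h^{m-1}}{s_h^{m-1}}{\cdot}{\cdot}$ and $\Wwan\varepsilon\,\cform{\cdot}{\nabla\phih^{m-1}}{\cdot}{\nabla\phih^{m-1}}{s_h^{m-1}}{s_h^{m-1}}$. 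Both of the latter are positive semi-definite, the first because $k_{ij}\ge0$ off the diagonal, the second by the Cauchy--Schwarz bound $|\bvh|^2|\nabla\phih^{m-1}|^2\ge(\bvh\cdot\nabla\phih^{m-1})^2$ at every node, while the $L^2$ term is coercive on the finite-dimensional space $\Vph$. Lax--Milgram then gives a unique $\bvh^m$, hence a unique $\bntilh^m$. The normalization $\bnh^m(\bx_i)=\bntilh^m(\bx_i)/|\bntilh^m(\bx_i)|$ is well defined since $\bvh^m(\bx_i)\perp\bnh^{m-1}(\bx_i)$ yields the Pythagorean identity $|\bntilh^m(\bx_i)|^2 = 1 + \tau^2|\bvh^m(\bx_i)|^2\ge1$; this also places $\bnh^m\in\Nh$.

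For \eqref{eq:cherk-scheme-s-discrete}, with $\bnh^m$ now fixed, I would recognize the equation as the Euler--Lagrange equation over $\Sph(\bdys,g_h)$ of the functional
\[ F(s_h) = \tfrac{1}{2\tau}\norm{s_h - s_h^{m-1}}{L^2}^2 + \Werk\big(\kappa\aiprd{s_h}{s_h} + \tfrac12\eform{s_h}{s_h}{\bnh^m}{\bnh^m}\big) + \Wdw\!\int_\Omega f_c(s_h) + (\text{convex anchoring}) - (\text{explicit linear}). \]
Each quadratic piece is convex (again using $k_{ij}\ge0$ and the Cauchy--Schwarz positivity of the $c$-form in its $s$-slots), $f_c$ is convex, and the leading $L^2$ term is strictly convex, so $F$ is strictly convex; coercivity together with the blow-up of the singular convex part $f_c$ at $s=-1/2,1$ forces a unique minimizer lying in the open interval, i.e.\ a unique admissible $s_h^m$ solving \eqref{eq:cherk-scheme-s-discrete}.

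Finally, for the Cahn--Hilliard block \eqref{eq:cherk-scheme-phi-discrete}--\eqref{eq:cherk-scheme-mu-discrete} I would first take $\nuh\equiv1$ in \eqref{eq:cherk-scheme-phi-discrete}; since $\aiprd{\muh^m}{1}=0$ this gives $\iprd{\phih^m-\phih^{m-1}}{1}=0$, and summing over the time levels yields $\iprd{\phih^m-\phi^0}{1}=0$ by induction, which is exactly the mass-conservation claim. For solvability I would eliminate $\muh^m$ through the discrete inverse Laplacian, recasting $\phih^m$ (constrained to the prescribed mean) as the minimizer of the strictly convex, coercive functional $\tfrac{1}{2\tau}\norm{\phih-\phih^{m-1}}{-1,h}^2 + \Wchdw\varepsilon^{-1}\int_\Omega\tfrac14\phih^4 + \tfrac{\Wchgd\varepsilon}{2}\aiprd{\phih}{\phih} + (\text{convex }\Wwan,\Wwas\text{ anchoring}) - (\text{explicit linear})$, where the $c$-form and $\Ewas^h$ terms are convex in $\phih$ by the same Cauchy--Schwarz positivity; the minimizer $\phih^m$ is then unique, and the mean of $\muh^m$ is pinned by testing \eqref{eq:cherk-scheme-mu-discrete} with $\psih\equiv1$. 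The main obstacle in making this rigorous is the singular-potential argument in the $s_h^m$ step (confirming that the minimizer stays strictly inside $(-1/2,1)$) together with the $H^{-1}$/inverse-Laplacian reformulation of the mixed system; both follow the templates of \cite{nochetto2017finite} and \cite{diegel15}, respectively.
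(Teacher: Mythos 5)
Your proposal is correct, and it is essentially the argument the paper has in mind: the paper states this theorem \emph{without proof}, appealing precisely to the decoupled structure of \eqref{eq:cherk-scheme-bn-discrete}--\eqref{eq:cherk-scheme-mu-discrete} and to the arguments of \cite{nochetto2017finite} (Lax--Milgram for the tangential director update with the Pythagorean nodal identity, and convex minimization with the split singular potential for $s_h^m$) and \cite{diegel15} (the $H^{-1}$/inverse-Laplacian convex-minimization formulation of the Cahn--Hilliard block), which is exactly what you flesh out. Your mass-conservation argument (testing \eqref{eq:cherk-scheme-phi-discrete} with $\nuh \equiv 1$ and telescoping) is also the standard one intended by the authors.
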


The fully-discrete scheme \eqref{eq:cherk-scheme-bn-discrete}--\eqref{eq:cherk-scheme-mu-discrete} obeys the energy law stated below.

	\begin{thm}
	\label{thm-energy-law-full-scheme}
Let $(\phih^{m}, \muh^{m}, \bnh^{m}, s_h^m) \in \Yh \times \Yh \times \Nh \times \Sph$ be the unique solution of \eqref{eq:cherk-scheme-bn-discrete}--\eqref{eq:cherk-scheme-mu-discrete}, for all $1 \leq m \leq M$.  Then the following energy law holds for any $h, \tau >0$:
	\begin{align}
&E^h\left(s_h^\ell, \bnh^{\ell},\phih^{\ell} \right) +\frac{\Werk}{2}\sum_{m=1}^\ell \left(\eform{s_h^{m-1}}{s_h^{m-1}}{\bntilh^m}{\bntilh^m} - \eform{s_h^{m-1}}{s_h^{m-1}}{\bnh^m}{\bnh^m}\right)
\nonumber
\\
&\quad+ \frac{\Wwan}{2} \sum_{m=1}^\ell \Big(\varepsilon\,\cform{\bntilh^{m}}{ \nabla \phih^{m-1}}{\bntilh^{m}}{\nabla \phih^{m-1}}{s_h^{m-1}}{s_h^{m-1}} - \varepsilon\, \cform{\bn_h^{m}}{\nabla \phih^{m-1}}{\bn_h^{m}}{\nabla \phih^{m-1}}{s_h^{m-1}}{s_h^{m-1}} \Big)
\nonumber
\\
&\quad+\tau \sum_{m=1}^\ell \left( \varepsilon \norm{\nabla\muh^{m}}{L^2}^2 + \rho \norm{\dtau \bnh^m}{L^2}^2 + \norm{\dtau s_h^m}{L^2}^2 \right) + \Wchgd \frac{\tau^2}{2} \sum_{m=1}^\ell \varepsilon \norm{\nabla \dtau \phih^m}{L^2}^2
\nonumber
\\
&\quad+ \Wchdw \frac{\tau^2}{2} \sum_{m=1}^\ell \Big( \frac{1}{2\varepsilon} \norm{\dtau( \phih^m)^2}{L^2}^2 + \frac{1}{\varepsilon}  \norm{\phih^m \dtau \phih^m}{L^2}^2 + \frac{1}{\varepsilon}  \norm{\dtau \phih^m}{L^2}^2 \Big)
\nonumber
\\
&\quad+ \Werk\frac{\tau^2}{2} \sum_{m=1}^\ell \Big(2\kappa \norm{\nabla \dtau s_h^m}{L^2}^2 +  \eform{\sh^{m-1}}{\sh^{m-1}}{\dtau \bnh^m}{\dtau \bnh^m} + \eform{\dtau \sh^{m}}{\dtau \sh^{m}}{\bnh^m}{\bnh^m} \Big)
\nonumber
\\
&\quad+ \Wwan \frac{\tau^2}{2} \sum_{m=1}^\ell \Big( \varepsilon \, \cform{\bn_h^{m}}{\nabla \dtau \phih^{m}}{\bn_h^{m}}{ \nabla \dtau \phih^m}{s_h^m}{s_h^m} +  \varepsilon \, \cform{\dtau \bnh^m}{ \nabla \phih^{m-1}}{\dtau \bnh^m}{ \nabla \phih^{m-1}}{s_h^{m-1}}{s_h^{m-1}}
\nonumber
\\
&\quad + \Wwas\frac{\tau^2}{2} \sum_{m=1}^\ell \Big( \varepsilon \, \norm{(s_h^m - s^*) \nabla \dtau \phih^m}{L^2}^2 +  \varepsilon \, \norm{\dtau s_h^m \nabla \phih^m}{L^2}^2 \Big)
\nonumber
\\
&\quad  \le E^h\left(s_h^0, \bnh^0, \phih^0\right),
	\label{eq:ConvSplitEnLaw}
	\end{align}
for all $1 \leq \ell \leq M$ and where we note that $\eform{s_h^{m-1}}{s_h^{m-1}}{\bntilh^m}{\bntilh^m} - \eform{s_h^{m-1}}{s_h^{m-1}}{\bnh^m}{\bnh^m} \ge 0$ and $\cform{\bntilh^{m}}{\nabla \phih^{m-1}}{\bntilh^{m}}{\nabla \phih^{m-1}}{s_h^{m-1}}{s_h^{m-1}} - \cform{\bn_h^{m}}{ \nabla \phih^{m-1}}{\bn_h^{m}}{\nabla \phih^{m-1}}{s_h^{m-1}}{s_h^{m-1}} \ge 0$.  Moreover, the energy is monotonically decreasing, i.e.
\begin{equation*}
  E^h\left(s_h^\ell, \bnh^{\ell},\phih^{\ell} \right) \leq E^h\left(s_h^{\ell-1}, \bnh^{\ell-1},\phih^{\ell-1} \right), \quad \text{for all } ~ 1 \leq \ell \leq M.
\end{equation*}
	\end{thm}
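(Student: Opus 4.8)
The plan is to derive \eqref{eq:ConvSplitEnLaw} by testing each of the four scheme equations with the discrete time increment associated to its unknown, summing the resulting identities, and telescoping in $m$. Concretely, I would test \eqref{eq:cherk-scheme-bn-discrete} with $\bwh = \tau\bvh^m = \bntilh^m - \bnh^{m-1}$ (admissible since the increment $\bvh^m = \dtau\bntilh^m$ lies in $\Vph$), test \eqref{eq:cherk-scheme-s-discrete} with $z_h = s_h^m - s_h^{m-1}$, test \eqref{eq:cherk-scheme-phi-discrete} with $\nuh = \tau\muh^m$, and test \eqref{eq:cherk-scheme-mu-discrete} with $\psih = \phih^m - \phih^{m-1}$. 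Adding the last two so-tested equations cancels the $\iprd{\muh^m}{\cdot}$ cross term and leaves the $\tau\varepsilon\norm{\nabla\muh^m}{L^2}^2$ dissipation characteristic of the $H^{-1}$ flow, while the $\rho$-term in \eqref{eq:cherk-scheme-bn-discrete} and the $\iprd{\dtau s_h^m}{\cdot}$ term in \eqref{eq:cherk-scheme-s-discrete} produce the $\rho\norm{\dtau\bnh^m}{L^2}^2$ and $\norm{\dtau s_h^m}{L^2}^2$ dissipations.

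The core work is to rewrite every quadratic or nonlinear term as an energy difference plus a nonnegative correction. For each term that is a symmetric bilinear form $B$ evaluated on $u^m$ against $u^m - u^{m-1}$ (the gradient inner products $\kappa\aiprd{\cdot}{\cdot}$ and $\varepsilon\aiprd{\cdot}{\cdot}$, the Ericksen form $\eform{\cdot}{\cdot}{\cdot}{\cdot}$, the coupling form $\cform{\cdot}{\cdot}{\cdot}{\cdot}{\cdot}{\cdot}$, and the weighted $\Ewas^h$ products) I would apply the polarization identity $2B(u^m, u^m - u^{m-1}) = B(u^m,u^m) - B(u^{m-1},u^{m-1}) + B(u^m-u^{m-1},u^m-u^{m-1})$, whose last term is exactly $\tau^2 B(\dtau u^m,\dtau u^m)$ and supplies the $\tau^2$ sums in \eqref{eq:ConvSplitEnLaw}. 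The two double wells are handled differently. For Cahn--Hilliard I would use the exact scalar identities $b^3(b-a) = \frac14(b^4-a^4) + \frac14(b^2-a^2)^2 + \frac12 b^2(b-a)^2$ and $-a(b-a) = -\frac12(b^2-a^2) + \frac12(b-a)^2$ with $b = \phih^m$, $a = \phih^{m-1}$; these reproduce $\Echdw^h(\phih^m) - \Echdw^h(\phih^{m-1})$ together with the three explicit terms on the $\Wchdw$ line. For the Ericksen well I would instead use convexity of $f_c$ and $f_e$ to get the one-sided bound $f(s_h^m) - f(s_h^{m-1}) \le [f_c'(s_h^m) - f_e'(s_h^{m-1})](s_h^m - s_h^{m-1})$, whose discarded Bregman remainder is precisely the reason \eqref{eq:ConvSplitEnLaw} is an inequality rather than an equality.

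The delicate bookkeeping concerns the coupling form, whose energy $\Ewan^h$ is quadratic in each of $\bnh$, $\nabla\phih$, and $s_h$, and the sequential $\bn$--$s$--$\phi$ ordering of the scheme. I would split the increment of $\Ewan^h$ into a $\bn$-move governed by \eqref{eq:cherk-scheme-bn-discrete} at frozen $(\phih^{m-1}, s_h^{m-1})$, an $s$-move governed by \eqref{eq:cherk-scheme-s-discrete}, and a $\phi$-move governed by \eqref{eq:cherk-scheme-mu-discrete} at frozen $(\bnh^m, s_h^m)$; the midpoint weight $\shave^m$ in \eqref{eq:cherk-scheme-s-discrete} is exactly what renders the $s$-move an exact difference of squares with no leftover $\tau^2$ term, whereas the $\bn$- and $\phi$-moves deposit the $\cform{\dtau\bnh^m}{\cdots}{\dtau\bnh^m}{\cdots}{\cdots}{\cdots}$ and $\cform{\cdots}{\nabla\dtau\phih^m}{\cdots}{\nabla\dtau\phih^m}{\cdots}{\cdots}$ corrections. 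The same three-move decomposition applies to the Ericksen term and to $\Ewas^h$, and when the $\bn$-move difference is added to the $s$-move difference the intermediate quantities cancel, leaving the genuine increment $\frac{\Werk}{2}\big(\eform{s_h^m}{s_h^m}{\bnh^m}{\bnh^m} - \eform{s_h^{m-1}}{s_h^{m-1}}{\bnh^{m-1}}{\bnh^{m-1}}\big)$ and its analogue for $\cform{\cdot}{\cdot}{\cdot}{\cdot}{\cdot}{\cdot}$.

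I expect the main obstacle to be the normalization step $\bntilh^m \mapsto \bnh^m$, which is where the scheme is genuinely nonlinear and where the two explicit defect terms of \eqref{eq:ConvSplitEnLaw} are born. The key observation is that at each node $\bvh^m(\bx_i) \perp \bnh^{m-1}(\bx_i)$ with $|\bnh^{m-1}(\bx_i)| = 1$, so $|\bntilh^m(\bx_i)|^2 = 1 + \tau^2|\bvh^m(\bx_i)|^2 \ge 1$; hence Lemma \ref{lem:monotone_anchoring} applies and yields both $\eform{s_h^{m-1}}{s_h^{m-1}}{\bntilh^m}{\bntilh^m} \ge \eform{s_h^{m-1}}{s_h^{m-1}}{\bnh^m}{\bnh^m}$ and the analogous inequality for the coupling form, i.e.\ exactly the two bracketed quantities asserted to be nonnegative. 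The increments produced by the test functions are naturally expressed through $\bntilh^m$ rather than $\bnh^m$; reconciling them with the dissipation terms as written uses that the nodal projection onto the sphere is nonexpansive for points of norm $\ge 1$, so that passing from $\bntilh^m$ to $\bnh^m$ only decreases these nonnegative contributions and therefore preserves the stated $\le$ direction. Once all contributions are assembled and summed over $m = 1,\dots,\ell$, the energy differences telescope to $E^h(s_h^\ell,\bnh^\ell,\phih^\ell) - E^h(s_h^0,\bnh^0,\phih^0)$, every remaining term on the left is nonnegative, and the monotone decrease $E^h(s_h^\ell,\bnh^\ell,\phih^\ell) \le E^h(s_h^{\ell-1},\bnh^{\ell-1},\phih^{\ell-1})$ follows by applying the law over the single step $[t_{\ell-1}, t_\ell]$.
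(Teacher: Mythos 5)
Your proposal is correct and follows essentially the same route as the paper's proof: testing each equation with its discrete time increment, applying polarization-type identities to the symmetric forms \eqref{eq:identity-phi-dtau}--\eqref{eq:identity-eform-2} to produce the energy differences plus $\tau^2$ corrections, invoking the convex-splitting inequality \eqref{eq:convex_split_inequality} for the Ericksen double well, using Lemma \ref{lem:monotone_anchoring} with the observation $|\bntilh^m(\bx_i)|^2 = 1 + \tau^2 |\bvh^m(\bx_i)|^2 \ge 1$ to control the normalization step, and telescoping. The only cosmetic differences are that you scale the test functions by $\tau$ and make explicit the scalar identities behind the Cahn--Hilliard term and the nonexpansiveness remark for the nodal projection, which the paper handles implicitly through its notation $\dtau \bnh^m = (\bntilh^m - \bnh^{m-1})/\tau$.
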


	\begin{proof}
Setting $\bw = \bv_h^{m} = \dtau \bnh^m = \frac{\bntilh^m - \bnh^{m-1}}{\tau}$ in \eqref{eq:cherk-scheme-bn-discrete}, $z_h = \dtau \sh^m$ in \eqref{eq:cherk-scheme-s-discrete}, $\nuh= \muh^{m}$ in \eqref{eq:cherk-scheme-phi-discrete}, and $\psih = \dtau \phih^{m}$ in \eqref{eq:cherk-scheme-mu-discrete}, gives
	\begin{align}
	\rho \, \norm{\dtau \bnh^m}{L^2}^2 + \Werk \, \eform{s^{m-1}_h}{s^{m-1}_h}{\bntilh^{m}}{\dtau \bnh^m} + \Wwan \,\varepsilon\, \cform{\bntilh^{m}}{ \nabla \phih^{m-1}}{\dtau \bnh^m}{ \nabla \phih^{m-1}}{s_h^{m-1}}{s_h^{m-1}} &= \, 0,
	\label{eq:cherk-tested-energy-bn}
	\\
\norm{\dtau s_h^{m}}{L^2}^2 + \Werk \left[ 2 \,\kappa\, \aiprd{s_h^{m}}{\dtau \sh^m} + \eform{s_h^m}{\dtau \sh^m}{\bnh^{m}}{\bnh^{m}} \right] + \Wdw \delta_{s_h} \Edw^h ( s_h^{m}; \dtau \sh^m ) \qquad &
\nonumber
\\
 + \Wwas \, \varepsilon\, \iprd{(s_h^{m} - s^*) \nabla \phih^{m-1}}{\dtau \sh^m \nabla \phih^{m-1}} +  \Wwan \,\varepsilon \, \cform{\bnh^m}{\nabla \phih^{m-1}}{\bnh^m}{\nabla \phih^{m-1}}{\shave^m}{\dtau s_h^m} &= \, 0,
	\label{eq:cherk-tested-energy-s}
	\\
\iprd{\dtau \phih^{m}}{\muh^m} + \varepsilon \norm{\nabla \muh^{m}}{L^2}^2 &= \, 0,
	\label{eq:cherk-tested-energy-phi}
	\\
 \frac{\Wchdw}{4\varepsilon} \,\iprd{\left(\phih^{m}\right)^3 - \phih^{m-1}}{\dtau \phih^m} + \Wchgd \,\varepsilon \,\aiprd{\phih^{m}}{\dtau \phih^m} - \iprd{\muh^{m}}{\dtau \phih^m} \qquad &
\nonumber
\\
+ \Wwan \, \varepsilon \, \cform{\bn_h^{m}}{ \nabla \phih^{m}}{\bn_h^{m}}{ \nabla \dtau \phih^m}{s_h^m}{s^m_h} + \Wwas \, \varepsilon \, \iprd{(s^{m}_h - s^*) \nabla \phih^{m}}{(s^{m}_h - s^*) \nabla \dtau \phih^m} &= \, 0,
	\label{eq:cherk-tested-energy-mu}
	\end{align}

We note that since $\iprd{\cdot}{\cdot}$ and $\aiprd{\cdot}{\cdot}$ are bilinear forms and since $\cform{\cdot}{\cdot}{\cdot}{\cdot}{\cdot}{\cdot}$ and $\eform{\cdot}{\cdot}{\cdot}{\cdot}$ are multi-linear forms, we obtain the following identities:
	\begin{flalign}
\aiprd{\phih^m}{\dtau\phih^m}  =&\, \frac12\,\left[\,\dtau\norm{\nabla\phih^m}{L^2}^2  +\tau \norm{\nabla\dtau \phih^m}{L^2}^2 \,\right],
	\label{eq:identity-phi-dtau}
	\\
\iprd{\left(\phih^m\right)^3-\phih^{m-1}}{\dtau \phih^m} =&\, \frac14\, \dtau \norm{\left( \phih^m\right)^2-1}{L^2}^2
\nonumber
\\
&+\frac{\tau}4 \Bigl[\norm{\dtau( \phih^m)^2}{L^2}^2 +2\norm{\phih^m \dtau \phih^m}{L^2}^2 +2\norm{\dtau \phih^m}{L^2}^2 \, \Bigr]
	\label{eq:identity-nonlinear-first-order}
	\\
 \cform{\bn_h^{m}}{ \nabla \phih^{m}}{\bn_h^{m}}{ \nabla \dtau \phih^m}{s_h^m}{s^m_h} =&\, \frac{1}{2\tau} \Big(\cform{\bn_h^{m}}{ \nabla \phih^{m}}{\bn_h^{m}}{ \nabla \phih^m}{s_h^m}{s_h^m}
 \nonumber
 \\
 &- \cform{\bn_h^{m}}{ \nabla \phih^{m-1}}{\bn_h^{m}}{ \nabla \phih^{m-1}}{s_h^m}{s_h^m} \Big)
 \nonumber
 \\
 &+ \frac{\tau}{2} \cform{\bn_h^{m}}{ \nabla \dtau \phih^{m}}{\bn_h^{m}}{ \nabla \dtau \phih^m}{s_h^m}{s_h^m} ,
	\label{eq:identity-coupled-n-phi-s-1}
	\\
 \cform{\bntilh^{m}}{ \nabla \phih^{m-1}}{\dtau \bnh^m}{ \nabla \phih^{m-1}}{s_h^{m-1}}{s_h^{m-1}} =&\, \frac{1}{2\tau} \Big(\cform{\bntilh^{m}}{ \nabla \phih^{m-1}}{\bntilh^{m}}{ \nabla \phih^{m-1}}{s_h^{m-1}}{s_h^{m-1}}
  \nonumber
 \\
 &-  \cform{\bnh^{m-1}}{ \nabla \phih^{m-1}}{\bnh^{m-1}}{ \nabla  \phih^{m-1}}{s_h^{m-1}}{s_h^{m-1}}  \Big)
 \nonumber
 \\
 &+ \frac{\tau}{2}  \cform{\dtau \bnh^m}{ \nabla \phih^{m-1}}{\dtau \bnh^m}{ \nabla \phih^{m-1}}{s_h^{m-1}}{s_h^{m-1}} ,
	\label{eq:identity-coupled-n-phi-s-2}
\\
\cform{\bnh^m}{\nabla \phih^{m-1}}{\bnh^m}{\nabla \phih^{m-1}}{\shave^m}{\dtau s_h^m} =&\,\frac{1}{2\tau} \Big(\cform{\bnh^m}{\nabla \phih^{m-1}}{\bnh^m}{\nabla \phih^{m-1}}{s_h^{m}}{s_h^{m}}
  \nonumber
 \\
 &-  \cform{\bnh^m}{\nabla \phih^{m-1}}{\bnh^m}{\nabla \phih^{m-1}}{s_h^{m-1}}{s_h^{m-1}} \Big),
 \label{eq:identity-coupled-n-phi-s-3}
\\
	 \iprd{(\sh^m - s^*) \cdot \nabla \phih^{m}}{(\sh^m - s^*)  \cdot \nabla \dtau \phih^{m}} =& \, \frac{1}{2\tau} \norm{(\sh^m - s^*)  \cdot \nabla \phih^{m}}{L^2}^2 - \frac{1}{2\tau} \norm{(\sh^m - s^*)  \cdot \nabla \phih^{m-1}}{L^2}^2
	 \nonumber
	 \\
	 &+ \frac{\tau}{2}  \norm{(\sh^m - s^*)  \cdot \nabla \dtau \phih^{m}}{L^2}^2
	\label{eq:identity-coupled-s-phi-1}
	\\
\iprd{(\sh^m - s^*)  \cdot \nabla \phih^{m-1} }{\dtau \sh^m \cdot \nabla \phih^{m-1}}
=&\, \frac{1}{2\tau}  \norm{(\sh^m - s^*)  \cdot \nabla \phih^{m-1}}{L^2}^2 - \frac{1}{2\tau}  \norm{(\sh^{m-1} - s^*)  \cdot \nabla \phih^{m-1}}{L^2}^2
\nonumber
\\
&+ \frac{\tau}{2}  \norm{\dtau \sh^m \cdot \nabla \phih^{m}}{L^2}^2,
	\label{eq:identity-coupled-s-phi-2}
	\\
	 \eform{\sh^{m-1}}{\sh^{m-1}}{\bntilh^m}{\dtau \bnh^m} =&\, \frac{1}{2\tau}  \left(\eform{\sh^{m-1}}{\sh^{m-1}}{\bntilh^m}{\bntilh^m} - \eform{\sh^{m-1}}{\sh^{m-1}}{\bnh^{m-1}}{\bnh^{m-1}} \right)
	 \nonumber
	 \\
&+ \frac{\tau}{2} \eform{\sh^{m-1}}{\sh^{m-1}}{\dtau \bnh^m}{\dtau \bnh^m},
	 \label{eq:identity-eform-1}
	 \\
 \eform{\sh^{m}}{\dtau \sh^{m}}{\bnh^m}{\bnh^m} =&\, \frac{1}{2\tau}  \left(\eform{\sh^{m}}{\sh^{m}}{\bnh^m}{\bnh^m} - \eform{\sh^{m-1}}{\sh^{m-1}}{\bnh^m}{\bnh^{m}} \right)
 \nonumber
 \\
&+ \frac{\tau}{2} \eform{\dtau \sh^{m}}{\dtau \sh^{m}}{\bnh^m}{\bnh^m} .
	 \label{eq:identity-eform-2}
	\end{flalign}
Additionally, following the procedures supplied in \cite{Wise_SJNA2009, Shen_DCDS2010, Shen_SJSC2010, nochetto2017finite}, we have
\begin{equation}
  \int_{\Omega} \dwfunc(s_h^{k+1}) \, d\bx - \int_{\Omega} \dwfunc(s_h^{k}) \, d\bx \leq \delta_{s_h} \Edw^h ( s_h^{k+1}; s_h^{k+1} - s_h^k ),
  \label{eq:convex_split_inequality}
\end{equation}
for any $s_h^{k}$ and $s_h^{k+1}$ in $\Sph$, Therefore, combining \eqref{eq:cherk-tested-energy-bn} -- \eqref{eq:cherk-tested-energy-mu}, using the identities above, and applying the operator $\tau\sum_{m=1}^\ell$ results in \eqref{eq:ConvSplitEnLaw}.
	\end{proof}

\section{$\Gamma$-Convergence of the Fully Discrete Scheme}
\label{sec:gamma-convergence}

In this section, we show that the total discrete energy \eqref{eq:energy-total-discrete} converges to the total continuous energy \eqref{eq:energy-total} in the $\Gamma$-convergence sense; this is a slightly more general result than \cite[Thm 3.7]{nochetto2017finite} which only shows that global minimizers $\Gamma$-converge.  We require the use of the following proposition whose proof may be found in \cite{nochetto2017finite}.

\vspace{.1in}

\begin{prop}
\label{prop:regularization-s-u}
Let $\Gamma_s=\Gamma_\bu =\partial \Omega, (s,\bu) \in \Admerk(g,\br)$, and let $g$ satisfy \eqref{eq:g_pos}. Then, given $\delta > 0$, there exists a pair $(s_\delta, \bu_\delta) \in \Admerk(g,\br) \cap \left[W_\infty^1(\Omega)\right]^{d+1}$ such that
\begin{equation*}
    \norm{(s,\bu) - (s_\delta, \bu_\delta)}{H^1(\Omega)} \le \delta.
\end{equation*}
Moreover, define $\bn_\delta := \bu_\delta / s_\delta$ if $s_\delta \neq 0$, and any unit vector if $s_\delta = 0$.  Then, $\bn_\delta$ is Lipschitz on $\Omega \setminus \{ |s_\delta| \geq \xi \}$, for any $\xi > 0$, where the Lipschitz constant depends on $\delta$ and $\xi$.
\end{prop}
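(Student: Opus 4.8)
The plan is to construct $(s_\delta,\bu_\delta)$ by mollification followed by a projection that restores the structural condition, treating the Dirichlet data and the singular set $\Sing=\{s=0\}$ as the two sources of difficulty. First note that the \emph{moreover} clause is essentially automatic: once $(s_\delta,\bu_\delta)\in\Admerk(g,\br)\cap[W_\infty^1(\Omega)]^{d+1}$ is in hand, on the set $\{|s_\delta|\ge\xi\}$ the director $\bn_\delta=\bu_\delta/s_\delta$ is the quotient of a Lipschitz vector field by a Lipschitz scalar bounded below by $\xi$, hence Lipschitz with constant controlled by $\xi$ and by $\|(s_\delta,\bu_\delta)\|_{W_\infty^1}$ (which in turn depends on $\delta$). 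Thus the real content is $H^1$-density of the Lipschitz structural pairs in $\Admerk(g,\br)$.

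The structural condition $\bu=s\bn$, $|\bn|=1$ is equivalent to $(s,\bu)$ lying on the cone $\mathcal{C}=\{(\sigma,\bv)\in\R^{1+d}:|\bv|=|\sigma|\}$, which is a smooth $d$-manifold away from its vertex $(\sigma,\bv)=(0,0)$; the vertex is exactly the image of $\Sing$. The nearest-point projection onto $\mathcal{C}$ is $P(\sigma,\bv)=\tfrac{1}{2}(|\sigma|+|\bv|)\,\big(\mathrm{sgn}\,\sigma,\ \bv/|\bv|\big)$, which is smooth wherever it is bounded away from the vertex. Away from $\Sing$ this yields a clean construction: on $\{|s|\ge\xi\}$ (and, by \eqref{eq:g_pos}, on a collar of $\partial\Omega$ where $s\ge\delta_0/2$) we have $\bn\in H^1(\Omega)\cap L^\infty(\Omega)$, so I would mollify $(s,\bu)$ at scale $\epsilon$ to a pair that is smooth and hence in $W_\infty^1$ for each fixed $\epsilon$, compose with $P$ to return to $\mathcal{C}$, and compose the scalar component with a smooth truncation into a compact subinterval $[-1/2+\eta,1-\eta]$ (applied to the scalar only, with the unit direction kept fixed, so that $|\bu_\delta|=|s_\delta|$ is preserved). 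Near $\partial\Omega$ the exact Lipschitz data $(g,\br)$ are restored by a cut-off; this is harmless precisely because the collar lies away from the vertex, where $P$ and the truncation are smooth.

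The genuinely delicate region is a neighborhood of $\Sing$. There $\bn$ is honestly discontinuous (the point/line defects the model is designed to allow), the projection $P$ is non-Lipschitz at the vertex, and mollification of $\bu=s\bn$ can produce spurious cancellation, so $\bu_\epsilon/|\bu_\epsilon|$ may be discontinuous at interior zeros of $\bu_\epsilon$; one therefore \emph{cannot} reconstruct the director from the mollified field here. Instead, on $\{|s|<\xi\}$ I would discard the direction information, set $\bn_\delta$ equal to a fixed smooth (or merely Lipschitz) unit field, keep $s_\delta$ small, and define $\bu_\delta=s_\delta\bn_\delta$; this is Lipschitz with $|\bu_\delta|=|s_\delta|$ small. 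The two constructions are glued with a partition of unity supported across $\{|s|\sim\xi\}$.

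The error estimate then splits accordingly. On the good region the Lipschitz maps $P$ and the truncation, together with standard mollification bounds, give $\norm{(s,\bu)-(s_\delta,\bu_\delta)}{H^1(\Omega)}\to 0$ as $\epsilon,\eta\to 0$. On the neighborhood of $\Sing$ the quantities $|\bu|=|s|$ and $|\bu_\delta|=|s_\delta|$ are small, and since $(s,\bu)\in[H^1(\Omega)]^{d+1}$ the energy $\int|\nabla\bu|^2$ over a shrinking neighborhood of $\Sing$ tends to zero by absolute continuity of the integral. The main obstacle is exactly this gluing: the transition-layer gradients of $\bn_\delta$ across $\{|s|\sim\xi\}$ must be balanced against the smallness of $s_\delta$ (which damps $s_\delta\nabla\bn_\delta$), so that the reconstructed Dirichlet energy there is controlled and the total error stays below $\delta$. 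This forces a simultaneous (diagonal) choice of the mollification scale $\epsilon$, the singular-width $\xi$, and the truncation parameter $\eta$, and is where the incompatibility between linear mollification and the nonlinear unit-length constraint at defects must be confronted.
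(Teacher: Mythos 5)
First, for context: the paper does not prove Proposition \ref{prop:regularization-s-u} at all --- it is imported from \cite{nochetto2017finite} with the remark that its proof ``may be found'' there --- so your attempt can only be compared with the argument in that reference (which in turn follows Ambrosio and Lin). Your reading of the statement is sensible: the ``moreover'' clause is indeed elementary once the density assertion is available (a quotient of Lipschitz functions whose denominator is bounded below by $\xi$ in modulus is Lipschitz on that set; you also silently correct the paper's typo, since the relevant set is $\{|s_\delta|\ge\xi\}$ rather than its complement), and your overall architecture --- treat $\{|s|\ge\xi\}$ and the defect region separately, discard the director near $\Sing$ and damp it by the smallness of $s_\delta$, glue across a transition layer, and use that $\nabla s$ and $\nabla\bu$ vanish a.e.\ on $\Sing$ --- has the right general shape.

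Nevertheless there are genuine gaps, one admitted and one not. The admitted one: the transition-layer estimate (balancing cut-off gradients against the smallness of $s_\delta$, with a diagonal choice of $\epsilon$, $\xi$, $\eta$) \emph{is} the content of the proof, and you state it as an obstacle rather than carry it out. The unadmitted one is worse: the step ``mollify, then project onto the cone'' fails on the good set, not just near $\Sing$. The projection $P(\sigma,\bv)=\tfrac12(|\sigma|+|\bv|)\left(\mathrm{sgn}\,\sigma,\,\bv/|\bv|\right)$ is discontinuous across the entire hyperplane $\{\sigma=0\}$ (and on $\{\bv=0\}$), not only at the vertex, and mollification of an admissible pair can land exactly on this set while staying far from the vertex. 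Concretely, take $s=a\sin(2\pi x_1/\ell)$ and $\bn=\mathrm{sgn}\left(\sin(2\pi x_1/\ell)\right)\bn_0$, so that $\bu=s\bn=a\,|\sin(2\pi x_1/\ell)|\,\bn_0$: this pair is admissible with finite energy, but mollifying at scale $\epsilon\gtrsim\ell$ gives $s_\epsilon\approx 0$ while $\bu_\epsilon\approx(2a/\pi)\bn_0$, so $(s_\epsilon,\bu_\epsilon)$ sits on the discontinuity set of $P$; the projected scalar $\mathrm{sgn}(s_\epsilon)\bigl(|s_\epsilon|+|\bu_\epsilon|\bigr)/2$ is then a step function of size $\sim a/\pi$, which is not Lipschitz, not even in $H^1$, and nowhere near $(s,\bu)$. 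The defect set here is the zero set of $\sin$, yet the damage occupies its spatial $\epsilon$-neighborhood, which your level-set decomposition $\{|s|\ge\xi\}$ versus $\{|s|<\xi\}$ does not see; for a general $H^1$ pair, $\Sing$ can even be dense, so no choice of $\epsilon$ depending on $\xi$ repairs this. This hidden cancellation in $\bu=s\bn$ under linear smoothing is precisely why the known proofs build the approximation out of operations that preserve the structural condition exactly (truncations and compositions with Lipschitz maps applied to the pair itself) rather than mollification followed by projection. A smaller but real additional problem: restoring the Dirichlet data by a cut-off is a pointwise convex combination, and the cone is not convex, so that step destroys the structural condition you just repaired, and re-projecting reintroduces the same difficulty.
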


Furthermore, in order to prove the full $\Gamma$-convergence result in Theorem \ref{thm:Gamma_convergence_main}, we also need the following lemma.
\begin{lem}[Recovery Sequence for Ericksen]\label{lem:recov_seq_Ericksen}
Let $(s,\bu) \in \Admerk(g,\br)$ where $\bu = s \bn$ with $|\bn| = 1$ a.e.  Then there exists a sequence $(s_h,\buh) \in \Admerkh(g_h,\br_h)$ converging to $(s,\bu)$ in $H^1(\Omega)$, as well as $\bnh \in \Nh$ converging to $\bn$ in $L^2(\Omega \setminus \Sing)$, such that
\begin{equation*}
  \Eerk(s,\bn) = \lim_{h \to 0} \Eerk^h(s_h,\bnh).
\end{equation*}
\end{lem}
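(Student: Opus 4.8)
The plan is to establish the $\Gamma$-limsup (recovery) inequality for an explicitly constructed sequence and then combine it with the liminf inequality contained in the $\Gamma$-convergence \eqref{eqn:ericksen-gamma-convergence} to obtain the energy equality. The construction has two nested approximations tied together by a diagonal argument: first replace $(s,\bu)$ by its Lipschitz regularization $(s_\delta,\bu_\delta)$ from Proposition \ref{prop:regularization-s-u}, then take nodal Lagrange interpolants. Fix $\delta>0$ and set $s_h := I_h s_\delta \in \Sph$, let $\bnh \in \Nh$ be defined by the nodal values $\bnh(\bx_i) := \bn_\delta(\bx_i)$ (which are unit vectors, so $\bnh \in \Nh$), and put $\buh := I_h(s_h \bnh)$. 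Since $s_h(\bx_i)\,\bnh(\bx_i) = s_\delta(\bx_i)\,\bn_\delta(\bx_i) = \bu_\delta(\bx_i)$ at every node, in fact $\buh = I_h \bu_\delta$, and the triple $(s_h,\buh)$ lies in $\Admerkh(g_h,\br_h)$. Because $(s_\delta,\bu_\delta) \in [W_\infty^1(\Omega)]^{d+1}$, standard interpolation estimates yield $s_h \to s_\delta$ and $\buh \to \bu_\delta$ in $H^1(\Omega)$, hence $\norm{\nabla s_h}{L^2}^2 \to \norm{\nabla s_\delta}{L^2}^2$ and $\norm{\nabla \buh}{L^2}^2 \to \norm{\nabla \bu_\delta}{L^2}^2$.

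For the energy I would exploit the elementary nodal identity, valid because $|\bnh(\bx_i)| = 1$: writing $s_i := s_h(\bx_i)$, one has $\tfrac{s_i^2 + s_j^2}{2}|\bnh(\bx_i) - \bnh(\bx_j)|^2 = |\buh(\bx_i) - \buh(\bx_j)|^2 - (s_i - s_j)^2 + \tfrac12 (s_i - s_j)^2 |\bnh(\bx_i) - \bnh(\bx_j)|^2$. Multiplying by $k_{ij}$, summing, and using the identities $\tfrac12\sum_{i,j} k_{ij}(s_i - s_j)^2 = \norm{\nabla s_h}{L^2}^2$ and $\tfrac12\sum_{i,j}k_{ij}|\buh(\bx_i)-\buh(\bx_j)|^2 = \norm{\nabla \buh}{L^2}^2$, the discrete Ericksen energy \eqref{eq:energy-ericksen-discrete} becomes
\[
\Eerk^h(s_h,\bnh) = (\kappa - 1)\norm{\nabla s_h}{L^2}^2 + \norm{\nabla \buh}{L^2}^2 + R_h, \qquad R_h := \tfrac14 \sum_{i,j=1}^N k_{ij}\,(s_i - s_j)^2\,|\bnh(\bx_i) - \bnh(\bx_j)|^2 \ge 0,
\]
where $R_h \ge 0$ because $k_{ij} \ge 0$ for $i \ne j$ by weak acuteness. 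The first two terms converge to $(\kappa-1)\norm{\nabla s_\delta}{L^2}^2 + \norm{\nabla \bu_\delta}{L^2}^2 = \widetilde{\Eerk}(s_\delta,\bu_\delta) = \Eerk(s_\delta,\bn_\delta)$, so the recovery property for fixed $\delta$ reduces to showing $R_h \to 0$.

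Showing $R_h \to 0$ is the main obstacle, and I would argue it by splitting the edge sum according to the cutoff set $\{s_\delta \ge \xi\}$. On this set $\bn_\delta$ is Lipschitz (Proposition \ref{prop:regularization-s-u}), so each edge contributes $O(h^4)$; since $\sum_{i \ne j} k_{ij} = \sum_i \norm{\nabla \eta_i}{L^2}^2 = O(h^{-2})$, the far contribution is $O(h^2)$ and vanishes as $h \to 0$ for every fixed $\xi$. On the near-singular complement I bound $|\bnh(\bx_i) - \bnh(\bx_j)|^2 \le 4$ and use the global Lipschitz bound $(s_i - s_j)^2 \le C(\delta)h^2$, so that this contribution is controlled by $C(\delta)\,|\{s_\delta < \xi\}|$, uniformly in $h$. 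The delicate point is that $\bn_\delta$ is genuinely non-Lipschitz near $\Sing$ and the far-region Lipschitz constant blows up as $\xi \to 0$; I would resolve this exactly as in the proof of \eqref{eqn:ericksen-gamma-convergence} in \cite{nochetto2017finite}, sending $h \to 0$ first and then $\xi \to 0$ (using that the measure of the near-singular set vanishes), so that $\limsup_{h} R_h = 0$ for each fixed $\delta$.

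Finally, I would track $\bnh$ and diagonalize. On each fixed set $\{s_\delta \ge \xi\}$, Lipschitz interpolation gives $\bnh \to \bn_\delta$ in $L^2$, while $s_\delta \to s$ and $\bu_\delta \to \bu$ in $H^1$ force $\bn_\delta = \bu_\delta/s_\delta \to \bu/s = \bn$ (along a subsequence, a.e.\ and, since $|\bn_\delta| = 1$, in $L^2$ by dominated convergence) on $\{|s| \ge \xi\} \subset \Omega \setminus \Sing$. Exhausting $\Omega \setminus \Sing$ by the sets $\{|s| \ge \xi\}$ upgrades this to convergence in $L^2(\Omega \setminus \Sing)$. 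Since $(s_\delta,\bu_\delta) \to (s,\bu)$ in $H^1$ also gives $\Eerk(s_\delta,\bn_\delta) = \widetilde{\Eerk}(s_\delta,\bu_\delta) \to \widetilde{\Eerk}(s,\bu) = \Eerk(s,\bn)$ as $\delta \to 0$, a standard diagonal argument selects $\delta = \delta(h) \to 0$ as $h \to 0$, producing a single sequence $(s_h,\buh) \in \Admerkh(g_h,\br_h)$ with $\bnh \in \Nh$ that realizes all stated convergences and satisfies $\lim_{h \to 0}\Eerk^h(s_h,\bnh) = \Eerk(s,\bn)$.
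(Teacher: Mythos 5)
Your construction and overall architecture coincide with the paper's proof: regularize $(s,\bu)$ to $(s_\delta,\bu_\delta)$ via Proposition \ref{prop:regularization-s-u}, take Lagrange interpolants (your $\buh = I_h(s_h\bnh) = I_h\bu_\delta$ is exactly the paper's choice of $s_h = I_h s_{\delta}$, $\bu_h = I_h \bu_{\delta}$ with nodal director $\bu_h(\bx_i)/s_h(\bx_i)$), and diagonalize $\delta = \delta(h)$. The difference is that where the paper simply cites \cite[Lemma 3.3]{nochetto2017finite} for the consistency statement $\Eerk^h(I_h s_\delta, I_h \bn_\delta) \to \Eerk(s_\delta,\bn_\delta)$ for Lipschitz admissible pairs, you attempt to re-prove it via the splitting $\Eerk^h(s_h,\bnh) = (\kappa-1)\norm{\nabla s_h}{L^2}^2 + \norm{\nabla\buh}{L^2}^2 + R_h$. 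That nodal identity is correct (it is the discrete counterpart of \eqref{eq:energy-s-u}), the reduction of the whole problem to $R_h \to 0$ is sound, and your far-region estimate ($O(h^4)$ per edge against $\sum_{i\ne j}k_{ij} = O(h^{-2})$) is fine.

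The genuine gap is in your near-singular estimate for $R_h$. You bound that contribution by $C(\delta)\,|\{s_\delta < \xi\}|$ and then claim it is killed by "the measure of the near-singular set vanishes" as $\xi \to 0$. But $|\{|s_\delta|<\xi\}| \to |\{s_\delta = 0\}|$ as $\xi\to0$ (and with your one-sided cutoff, to $|\{s_\delta\le 0\}|$, which is even larger), and the zero set of $s_\delta$ — like the singular set $\Sing$ of $s$ itself — may well have \emph{positive} measure: the Ericksen energy remains finite when $s$ vanishes on large sets (that is precisely how the model relaxes defects), and Proposition \ref{prop:regularization-s-u} gives no control on $|\{s_\delta=0\}|$. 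So your bound leaves the non-vanishing constant $C(\delta)\,|\{s_\delta=0\}|$, and sending $h\to0$ then $\xi\to0$ does not close the argument. The repair is to not discard the gradient structure when you bound $|\bnh(\bx_i)-\bnh(\bx_j)|^2\le 4$: the near-singular part of $R_h$ is controlled by $\sum k_{ij}(s_i-s_j)^2$ over those edges, which is comparable to $\int_{N_\xi}|\nabla s_h|^2$ for a neighborhood $N_\xi$ of $\{|s_\delta|\le\xi\}$; since $\nabla I_h s_\delta \to \nabla s_\delta$ in $L^2$, this tends (as $h \to 0$, then $\xi\to0$) to $\int_{\{s_\delta=0\}}|\nabla s_\delta|^2$, which \emph{is} zero because the gradient of a Sobolev function vanishes a.e.\ on its level sets — not because the level set is small. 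This is the actual mechanism in \cite{nochetto2017finite}; your Lipschitz-constant-times-measure bound is not a faithful rendering of it and fails as stated. With that one step replaced, the rest of your argument (the $H^1$ convergence of interpolants of $W^1_\infty$ data, the convergence $\Eerk(s_\delta,\bn_\delta)\to\Eerk(s,\bn)$, the $L^2(\Omega\setminus\Sing)$ convergence of $\bnh$, and the diagonalization) goes through and reproduces the lemma.
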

\begin{proof}
First, note that we can assume $\Eerk (s, \bn) < \infty$ (otherwise, the result is trivial). Recall from \eqref{eq:energy-s-u} that $\Eerk(s,\bn) = \widetilde{\Eerk}(s,\bu)$ when $(s,\bu) \in \Admerk$.  By Proposition \ref{prop:regularization-s-u}, there exists $(s_\delta, \bu_\delta) \in \Admerk(g,\br) \cap \left[W_\infty^1(\Omega)\right]^{d+1}$, such that $\norm{(s,\bu) - (s_\delta, \bu_\delta)}{H^1(\Omega)} \to 0$, as $\delta \to 0$.  Ergo, with $k > 0$ being a given integer, one can choose $\delta_k > 0$ sufficiently small so that
\begin{equation*}
  \norm{(s,\bu) - (s_{\delta_k}, \bu_{\delta_k})}{H^1(\Omega)} < k^{-1}, \quad
  \left| \widetilde{\Eerk}(s_{\delta_k}, \bu_{\delta_k}) - \widetilde{\Eerk}(s,\bu) \right| < C_0 k^{-1},
\end{equation*}
where the constant $C_0 > 0$ depends on $\kappa$ and $\norm{(s,\bu)}{H^1(\Omega)}$; in fact, the last inequality follows from the first.

Next, introduce the Lagrange interpolants $s_{h} := I_{h} ( s_{\delta_k} )$, $\bu_{h} := I_{h} ( \bu_{\delta_k} )$ for some $h$ to be chosen; moreover, define
\begin{equation*}
  \bn_{h}(\bx_i) =
\begin{cases}
  \bu_{h}(\bx_i) / s_{h}(\bx_i), & \mbox{if } s_h(\bx_i) \neq 0 \\
  \text{any unit vector}, & \mbox{otherwise}.
\end{cases}
\end{equation*}
for each $\bx_i \in \Nodeh$.  So, $(s_{h}, \bn_{h}) \in \Admerkh(g_{h},\br_{h})$.  By \cite[Lemma 3.3]{nochetto2017finite}, it was shown that
\begin{equation*}
  \lim_{h \to 0} \Eerk^h (I_h (\hat{s}), I_h (\hat{\bn})) = \Eerk(\hat{s},\hat{\bn}),
\end{equation*}
for all $(\hat{s},\hat{\bu}) \in \Admerk(g,\br) \cap \left[W_\infty^1(\Omega)\right]^{d+1}$, where $\hat{\bn}$ is defined as in Proposition \ref{prop:regularization-s-u}.

Therefore, we can choose $h_k < \delta_k$ sufficiently small so that
\begin{equation*}
  \norm{(s_{\delta_k}, \bu_{\delta_k}) - (s_{h_k}, \bu_{h_k})}{H^1(\Omega)} < k^{-1}, \quad \text{and} \quad
  \left| \Eerk(s_{\delta_k}, \bn_{\delta_k}) - \Eerk^h (s_{h_k}, \bn_{h_k}) \right| < k^{-1}.
\end{equation*}
Combining the above, we obtain $\left| \Eerk^h (s_{h_k}, \bn_{h_k}) - \Eerk (s, \bn) \right| < C_1 k^{-1}$, for some constant $C_1$ that only depends on $\kappa$ and $\norm{(s,\bu)}{H^1(\Omega)}$.
Thus, there exists a sequence $(s_h,\buh) \in \Admerkh(g_h,\br_h)$ converging to $(s,\bu)$ in $H^1(\Omega)$, as well as $\bnh \in \Nh$ converging to $\bn$ in $L^2(\Omega \setminus \Sing)$, such that $\lim_{h \to 0} \Eerk^h ( s_h, \bn_h ) = \Eerk (s, \bn)$.
\end{proof}

We are now in position to prove the main convergence result.  The discrete energy $\Etot^h(s_h,\bnh,\phih)$ is defined on $\Wh := \Sph \times \Nh \times \Yh$, but convergence cannot be insured for a sequence $(s_h,\bnh,\phih) \in \Wh$, because $\bnh$ will not (in general) converge on the singular set $\Sing$.  However, we can guarantee convergence for $(s_h,\buh,\phih) \in \Xh := \Sph \times \Uh \times \Yh$, i.e. $\buh$ is well-behaved.  Thus, Theorem \ref{thm:Gamma_convergence_main} does not follow the standard definition of $\Gamma$-convergence \cite{DalMaso_book1993, Braides_book2002} but is similar; indeed, \emph{one level of indirection} is used in stating the convergence.

To this end, we define the continuous space to be $\X := L^2(\Omega) \times [L^2(\Omega)]^d \times L^2(\Omega)$, and note that $\Xh \subset \X$ \emph{and} $\Wh \subset \X$. Furthermore, we define $\Admall := \Admerk(g,\br) \times H^1(\Omega)$ and $\Admall_h := \Admerkh(g_h,\br_h) \times \Yh$.  Next, the continuous energy $\Etot : \X \to \R$ is defined as follows: $\Etot(s,\bn,\phi)$ by \eqref{eq:energy-total} if $(s,\bn,\phi) \in \Admall$, and set $\Etot(s,\bn,\phi) = \infty$ if $(s,\bn,\phi) \in \X \setminus \Admall$. Likewise, define the discrete energy $\Etot^h(s_h,\bnh,\phih)$ by \eqref{eq:energy-total-discrete} if $(s_h,\bnh,\phih)\in\Admall_h$, and set $\Etot^h(s,\bn,\phi) = \infty$ if $(s,\bn,\phi) \in \X \setminus \Admall_h$.

\medskip

\begin{thm}[$\Gamma$-convergence]\label{thm:Gamma_convergence_main}
Given $(s, \bn, \phi) \in \X$, where $|\bn|=1$ a.e., define the \emph{corresponding element} $(s, \bu, \phi) \in \X$, where $\bu := s \bn$.  In addition, given $(s_h, \bnh, \phih) \in \Wh$, define the \emph{corresponding element} $(s_h, \buh, \phih) \in \Xh$, where $\buh := I_h (s_h \bnh)$.  Let $\{ \Tauh \}$ be a sequence of weakly acute meshes and let $\gamma_0 > 0$ be some arbitrary fixed constant. Then the following properties hold for \emph{any} triple $(s, \bn, \phi)$ in $\X$, where $|\bn|=1$ a.e.~and $-1/2 + \gamma_0 \leq s \leq 1 - \gamma_0$ a.e.
 \begin{itemize}
  \item Lim-inf inequality. For every sequence $(s_h, \bnh, \phih) \in \Wh \subset \X$, such that the corresponding sequence $(s_h, \buh, \phih) \in \Xh \subset \X$ converges strongly to the corresponding triple $(s, \bu, \phi)$, we have
    \begin{align}\label{liminf}
      \Etot (s, \bn, \phi) \leq \liminf_{h \rightarrow 0} \Etot^h (s_h, \bnh, \phih);
    \end{align}
  \item Lim-sup inequality. There exists a sequence $(s_h, \bnh, \phih) \in \Wh \subset \X$ such that the corresponding sequence $(s_h, \buh, \phih) \in \Xh \subset \X$ converges strongly to the corresponding triple $(s, \bu, \phi)$, and
    \begin{align}
      \label{limsup}
      \Etot (s, \bn, \phi) \geq \limsup_{h\rightarrow 0} \Etot^h (s_h, \bnh, \phih).
    \end{align}
\end{itemize}
\end{thm}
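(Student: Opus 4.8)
The plan is to treat the two inequalities separately and, within each, to exploit the additive structure of $\Etot^h$ by establishing the required estimate term-by-term for the contributions in \eqref{eq:energy-total-discrete}. The single device that makes everything work is to rewrite the anchoring energies purely in terms of $\bu=s\bn$ (equivalently $\buh$) rather than $\bn$: since $|\bn|=1$, one has $\Ewan(s,\bn,\phi)=\frac{\varepsilon}{2}\int_\Omega \nabla\phi\cdot(|\bu|^2\bI-\bu\otimes\bu)\nabla\phi\,d\bx$, and because the lumped form \eqref{eq:discrete_inner_prod_coupling} only evaluates at mesh nodes, where $s_h\bnh=\buh$ exactly by \eqref{eq:struct_condition_discrete}, $\Ewan^h$ likewise becomes a lumped quadrature of $\frac{\varepsilon}{2}\int_\Omega \nabla\phih\cdot(|\buh|^2\bI-\buh\otimes\buh)\nabla\phih$. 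This removes the obstruction that $\bnh$ need not converge on $\Sing$ and reduces the coupling terms to positive-semidefinite quadratic forms in $\nabla\phih$ with coefficient matrices built from the (strongly convergent) fields $s_h,\buh$.

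For the \emph{lim-inf} inequality I would first dispose of the trivial case $\liminf_{h\to0}\Etot^h=\infty$; otherwise pass to a subsequence realizing the liminf along which $\Etot^h$ is bounded. Since every summand is nonnegative except $\Edw^h$ (which is bounded below), $\liminf$ of the sum dominates the sum of the individual $\liminf$s, so it suffices to bound each term. Coercivity (Lemma \ref{lem:coercivity}) converts the bound on $\Eerk^h$ into uniform $H^1$ bounds on $s_h$ and $\buh$, and the bound on $\Echgd^h$ into a uniform $H^1$ bound on $\phih$; hence, along a further subsequence, $s_h\rightharpoonup s$, $\buh\rightharpoonup\bu$, $\phih\rightharpoonup\phi$ weakly in $H^1$, with strong $L^p$ convergence for $p<6$ (by Rellich--Kondrachov in $d\le3$) and pointwise a.e.\ convergence. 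The Ericksen term is then handled by the lim-inf half of the cited result \eqref{eqn:ericksen-gamma-convergence}; the two double-well terms follow from Fatou's lemma, using a.e.\ convergence together with the hypothesis $-1/2+\gamma_0\le s\le 1-\gamma_0$ to keep $f(s_h)$ away from its singularities; and $\Echgd^h$ follows from weak lower semicontinuity of the Dirichlet energy. For $\Ewas^h$ and the reformulated $\Ewan^h$ I would introduce the weighted fields $|s_h-s^*|\nabla\phih$ and $A_h^{1/2}\nabla\phih$ with $A_h:=|\buh|^2\bI-\buh\otimes\buh$, show that they converge \emph{weakly} in $L^2$ to their continuous counterparts (by pairing against a dense class of bounded test functions, for which the coefficient times test function converges strongly via dominated convergence using the $L^p$ convergence of $s_h,\buh$), and conclude by weak lower semicontinuity of the $L^2$ norm.

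For the \emph{lim-sup} (recovery) inequality I would construct a single sequence serving all terms. Lemma \ref{lem:recov_seq_Ericksen} supplies $(s_h,\buh)\in\Admerkh(g_h,\br_h)$ with $s_h\to s$, $\buh\to\bu$ in $H^1(\Omega)$ and $\bnh\to\bn$ in $L^2(\Omega\setminus\Sing)$, realizing $\Eerk^h\to\Eerk$; crucially that construction interpolates a Lipschitz regularization staying in a compact subinterval of $(-1/2,1)$, so $\Edw^h(s_h)\to\Edw(s)$ by dominated convergence. For the phase field I would take $\phih:=I_h\phi_\delta$ for a smooth $\phi_\delta\to\phi$ in $H^1$, giving $\phih\to\phi$ in $H^1$ (hence in $L^4$ for $d\le3$), so $\Echdw^h\to\Echdw$ and $\Echgd^h\to\Echgd$. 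For the coupling terms the strong convergence of $\buh,s_h$ and $\nabla\phih$ lets the products pass to the limit, while the mass-lumping quadrature defect vanishes by the standard consistency estimate for the vertex rule applied to the piecewise-smooth interpolated data; a diagonal selection in $(\delta,h)$ then produces one recovery sequence realizing $\Etot^h\to\Etot$.

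I expect the main obstacle to be the lim-inf estimate for the coupling terms, where $\nabla\phih$ is only weakly $L^2$-convergent: one must combine the $\bu$-reformulation (to bypass $\bnh$ on $\Sing$ and to linearize away the mass lumping through the nodal identity $s_h\bnh=\buh$) with weak lower semicontinuity of positive-semidefinite quadratic forms in $\nabla\phih$ having strongly convergent coefficients, and simultaneously verify that the lumped-quadrature defect does not destroy the inequality. Controlling that defect with only an $L^2$ bound on $\nabla\phih$ is the most delicate point of the argument.
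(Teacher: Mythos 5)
Your proposal is correct, and its overall architecture matches the paper's: the trivial-case reduction, coercivity via Lemma \ref{lem:coercivity} to get weak $H^1$ compactness, citing the Ericksen lim-inf result and Lemma \ref{lem:recov_seq_Ericksen} for the recovery sequence, Fatou and weak lower semicontinuity for the Cahn--Hilliard pieces, and—crucially—the same key device of rewriting both anchoring energies in terms of $\bu$ and $\buh$ through the nodal identity $\buh = I_h(s_h\bnh)$, which is exactly the paper's identity \eqref{eq:energy-an-with-u_h}. Where you genuinely diverge is the lim-inf for the anchoring terms. The paper splits off the coefficient errors, e.g. $\int_\Omega|\nabla\phih|^2\left[(s_h-s^*)^2-(s-s^*)^2\right]$ and $\int_\Omega|\nabla\phih|^2\left[I_h\{|\buh|^2\}-|\bu|^2\right]$, and since these defects pair against $|\nabla\phih|^2$, which is only bounded in $L^1$, it invokes Egorov's theorem to get uniform convergence on sets $\Adelta$ with $|\Omega\setminus\Adelta|\le\delta$, uses weak lower semicontinuity of the frozen-coefficient form on $\Adelta$, and lets $\delta\to0$ by dominated convergence. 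You instead establish weak $L^2$ convergence of the weighted fields $(s_h-s^*)\nabla\phih$ and $A_h^{1/2}\nabla\phih$ (weak-times-strong pairing against bounded test functions, plus the uniform $L^2$ bound coming from the energy) and conclude by weak lower semicontinuity of the $L^2$ norm; this is a valid alternative that avoids Egorov entirely, at the cost of introducing matrix square roots, whereas the paper's argument is more elementary and self-contained. Moreover, the ``delicate point'' you flag at the end—controlling the lumping defect with only an $L^2$ bound on $\nabla\phih$—actually dissolves inside your own framework if you apply your argument not to $A_h=|\buh|^2\bI-\buh\otimes\buh$ but directly to the lumped coefficient matrix $\tilde{A}_h := I_h\{|\buh|^2\}\bI - I_h\{\buh\otimes\buh\}$: this matrix is positive semidefinite (on each element it is a barycentric convex combination of its PSD nodal values), uniformly bounded in $L^\infty$ (nodal values of $|\buh|$ are at most $1$ by the structural condition \eqref{eq:struct_condition_discrete}), and converges a.e.\ along a subsequence to $|\bu|^2\bI-\bu\otimes\bu$ by the same $O(h)$ interpolation estimates the paper proves together with a.e.\ convergence of $\buh$; then $\tilde{A}_h^{1/2}\nabla\phih\rightharpoonup\left(|\bu|^2\bI-\bu\otimes\bu\right)^{1/2}\nabla\phi$ weakly in $L^2$ by exactly your dominated-convergence pairing, and the defect never needs to be multiplied against $|\nabla\phih|^2$—that multiplication is what forces the paper into Egorov. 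Your lim-sup construction differs only cosmetically (Lagrange interpolation of a mollification plus a diagonal argument in $(\delta,h)$, versus the paper's elliptic projection of $\phi$); both yield $\phih\to\phi$ in $H^1(\Omega)$, which is all that is used downstream.
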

\begin{proof}
The proof is split into two parts.

(\underline{Part 1: Lim-inf inequality})

Let $(s_h, \bnh, \phih) \in \Wh$ be any sequence such that its corresponding sequence $(s_h, \buh, \phih) \in \Xh$ converges strongly to $(s, \bu, \phi) \in \X$.  Ergo, by hypothesis, we have
\begin{align*}
\begin{split}
s_h \rightarrow s& \text{ in } L^2(\Omega), \quad \buh \rightarrow \bu \text{ in } L^2(\Omega), \quad \phih \rightarrow \phi \text{ in } L^2(\Omega),
\\
s_h \rightarrow s& \text{ a.e. in } \Omega, \quad \buh \rightarrow \bu \text{ a.e. in } \Omega, \quad \phih \rightarrow \phi \text{ a.e. in } \Omega.
\end{split}
\end{align*}

Without loss of generality, we can assume that $\Etot (s, \bn, \phi) < \infty$; note: this implies that $(s,\bn) \in \Admerk(g,\br)$.
Moreover, we can assume there exists a constant $\Lambda > 0$ such that
\begin{align}
\liminf_{h\rightarrow 0} \Etot^h(s_h, \bnh, \phih) = \liminf_{h\rightarrow 0} \Big(&\Werk \Eerk^h(s_h,\bnh) + \Wdw \Edw^h(s_h) + \Wchdw \Echdw^h(\phih) + \Wchgd \Echgd^h(\phih)
\nonumber
\\
&+ \Wwan \Ewan^h(s_h,\bnh,\phih) + \Wwas \Ewas^h(s_h,\phih) \Big) \le \Lambda;
\label{eq:liminf-assumption}
\end{align}
otherwise, the inequality \eqref{liminf} is trivial.  Assumption \eqref{eq:liminf-assumption} also implies that $(s_h,\bn_h) \in \Admerkh(g_h,\br_h)$ for $h$ sufficiently small.
Combining \eqref{eq:liminf-assumption} with Lemma \ref{lem:coercivity} (coercivity) gives the following weakly convergent subsequences (not relabeled):
\begin{equation*}
   s_h \rightharpoonup s \text{ in } H^1(\Omega), \quad \buh \rightharpoonup \bu \text{ in } H^1(\Omega), \quad \phih \rightharpoonup \phi \text{ in } H^1(\Omega).
\end{equation*}

Note: if $\Etot (s, \bn, \phi) = \infty$, then either $(s,\bn) \notin \Admerk(g,\br)$ or $\phi \notin H^1(\Omega)$. In the later case, clearly $\liminf_{h\rightarrow 0} \Etot^h(s_h, \bnh, \phih) = \infty$, which contradicts \eqref{eq:liminf-assumption}.  For the former, either  $s \notin H^1(\Omega)$ or $\bu \notin [H^1(\Omega)]^d$.  Again, this implies $\liminf_{h\rightarrow 0} \Etot^h(s_h, \bnh, \phih) = \infty$, which contradicts \eqref{eq:liminf-assumption}.  Therefore, if $\Etot (s, \bn, \phi) = \infty$, then the inequality \eqref{liminf} is trivial.

Using Fatou's Lemma, one can show that $\Edw(s) \leq \liminf_{h\rightarrow 0} \Edw^h(s_h)$.  In \cite{nochetto2017finite}, the following technical result was proved:
$\Eerk(s,\bn) \leq \liminf_{h\rightarrow 0} \Eerk^h(s_h,\bnh)$;
so we do not repeat the argument here.  We now consider the remaining terms. By weak lower semi-continuity, we have
\begin{align*}
\Echgd(\phi) = \frac{\varepsilon}{2}\int_\Omega |\nabla \phi |^2 \le \liminf_{h\rightarrow 0} \frac{\varepsilon}{2}\int_\Omega |\nabla \phih |^2 = \liminf_{h\rightarrow 0} \Echgd^h(\phih).
\end{align*}
Additionally, using the compact Sobolev embedding  $H^1(\Omega) \hookrightarrow L^4(\Omega)$, for $d = 2, 3$, there exists a subsequence $\{\phih\}$ (not relabeled) such that $\phih \rightarrow \phi$ in $L^4(\Omega)$. The lim-inf inequality relating to $\Echdw(\phi)$ then follows from Fatou's Lemma:
\begin{align*}
\Echdw(\phi) = \frac{1}{4\varepsilon} \int_\Omega (\phi^2 -1)^2 \le \liminf_{h\rightarrow 0} \frac{1}{4\varepsilon} \int_\Omega (\phih^2 -1)^2 = \liminf_{h\rightarrow 0} \Echdw^h(\phih).
\end{align*}

For the anchoring energy $\Ewas^h(s_h,\phih)$, we split the integral into two parts by adding and subtracting appropriate terms as follows:
\begin{align*}
\Ewas^h(s_h,\phih) &= \frac{\varepsilon}{2} \int_\Omega |\nabla \phih|^2 (s_h - s^*)^2
\\
&=  \frac{\varepsilon}{2} \int_\Omega |\nabla \phih|^2 (s - s^*)^2 +  \frac{\varepsilon}{2} \int_\Omega |\nabla \phih|^2 \left[(s_h - s^*)^2 - (s - s^*)^2 \right].
\end{align*}
By Egorov's Theorem, given $\delta> 0$, there exists a subset $\Adelta \subset \Omega$ such that $(s_h - s^*)^2 \rightarrow (s - s^*)^2$ uniformly on $\Adelta$ and $\left| \Omega \setminus \Adelta \right| \le \delta$. Hence,
\begin{align*}
\lim_{h\rightarrow0} \left| \int_{\Adelta} |\nabla \phih|^2 \left[(s_h - s^*)^2 - (s - s^*)^2 \right] \right| &\le \lim_{h\rightarrow0} \norm{(s_h - s^*)^2 - (s - s^*)^2}{L^\infty(\Adelta)} \int_{\Adelta} |\nabla \phih |^2
\\
&\le \lim_{h\rightarrow0} \norm{(s_h - s^*)^2 - (s - s^*)^2}{L^\infty(\Adelta)} \int_\Omega |\nabla \phih |^2
\\
&\le C\Lambda \lim_{h\rightarrow0} \norm{(s_h - s^*)^2 - (s - s^*)^2}{L^\infty(\Adelta)}
\\
&=0.
\end{align*}
Thus,
\begin{align*}
 \liminf_{h\rightarrow0} \int_{\Omega}  |\nabla \phih|^2  (s_h - s^*)^2 &\ge \liminf_{h\rightarrow0} \int_{\Adelta} |\nabla \phih|^2 (s_h - s^*)^2
 \\
 &= \liminf_{h\rightarrow0} \int_{\Adelta} |\nabla \phih|^2 (s - s^*)^2 + \liminf_{h\rightarrow0} \int_{\Adelta} |\nabla \phih|^2 \left[(s_h - s^*)^2 - (s - s^*)^2 \right]
\\
&\ge \liminf_{h\rightarrow0} \int_{\Adelta} |\nabla \phih|^2 (s - s^*)^2
\\
&\ge \int_{\Adelta} |\nabla \phi|^2 (s - s^*)^2
\end{align*}
for all $\delta > 0$, where we have used weak lower semi-continuity \cite{evans1988partial}. Using Lebesgue's dominated convergence theorem and allowing $\delta \rightarrow 0$ gives the desired result.

To show the lim-inf inequality for the weak anchoring energy $\Ewan$, we begin by noting that, by using the same notation defined in Section \ref{sec:contin_energy_models} and the auxiliary variable $\bu := s\bn$, the weak anchoring energy $\Ewan$ can be rewritten as:
\begin{align}
\Ewan(s,\bn,\phi) = \Ewan(\bu,\phi)= \frac{\varepsilon}{2} \int_\Omega |\bu|^2 |\nabla \phi|^2 - (\bu \cdot \nabla \phi)^2.
\label{eq:energy-an-with-u}
\end{align}
Furthermore, we consider the discrete weak anchoring energy in the form of \eqref{eq:discrete_inner_prod_coupling_alt} and note the following equivalences:
\begin{align}
\Ewan^h(s_h,\bnh,\phih)&:= \frac{\varepsilon}{2}\sum\limits_{T_j \subset \Tauh} \int\limits_{T_j} I_h \left\{ s_h^2 \, \bnh \cdot [ (\nabla \phih \cdot \nabla \phih) \mathbf{I} - (\nabla \phih \otimes \nabla \phih) ]\bnh \right\}
\nonumber
\\
&= \frac{\varepsilon}{2} \sum\limits_{T_j \subset \Tauh} \int\limits_{T_j}  I_h \left\{ |\buh|^2 |\nabla \phih|^2\right\} - I_h \left\{(\nabla \phih \cdot \buh)^2 \right\} =: \Ewan^h(\buh,\phih).
\label{eq:energy-an-with-u_h}
\end{align}

By interpolation theory, we have
\begin{align*}
 \norm{I_h \left\{ |\buh|^2 \right\} - |\buh|^2}{L^2(\Omega)} \le C h \norm{\nabla \buh |\buh|}{L^2(\Omega)} \le C h \norm{\nabla \buh }{L^2(\Omega)} \le C \Lambda^{1/2} h.
\end{align*}
Similarly,
\begin{align*}
\norm{I_h \left\{ \buh \otimes \buh \right\} - \buh \otimes \buh}{L^2} &\le C \Lambda^{1/2} h.
\end{align*}
Therefore, since $\buh \to \bu$ in $L^2(\Omega)$, we have the following convergence results,
\begin{align*}
\left| I_h \left\{ |\buh|^2 \right\} - |\bu|^2 \right| \rightarrow 0, \text{ in } L^2(\Omega), &\quad \left| I_h \left\{ |\buh|^2 \right\} - |\bu|^2 \right| \rightarrow 0, ~ a.e. \text{ in } \Omega,
\\
\left| I_h \left\{ \buh \otimes \buh \right\} - \bu \otimes \bu \right| \rightarrow 0, \text{ in } L^2(\Omega), &\quad \left| I_h \left\{ \buh \otimes \buh \right\} - \bu \otimes \bu \right| \rightarrow 0, ~ a.e. \text{ in } \Omega.
\end{align*}

Due to the fact that $\nabla \phih$ is constant on each element, the discrete energy $\Ewan^h$ can be written as follows:
\begin{align*}
\Ewan^h(\buh,\phih) &= \frac{\varepsilon}{2} \int_\Omega  |\nabla \phih|^2 I_h \left\{|\buh|^2 \right\} - \nabla \phih \cdot \left(I_h \left\{ \buh \otimes \buh\right\} \right) \nabla \phih
\\
&= \frac{\varepsilon}{2} \int_\Omega |\nabla \phih|^2 |\bu|^2 - (\nabla \phih \cdot \bu)^2 + \frac{\varepsilon}{2} \int_\Omega |\nabla \phih|^2 \left[I_h \left\{|\buh|^2\right\} - |\bu|^2 \right]
\\
&\quad -  \frac{\varepsilon}{2} \int_\Omega  \nabla \phih \cdot \left[I_h \left\{ \buh \otimes \buh \right\} - \bu \otimes \bu\right] \nabla \phih.
\end{align*}
By Egorov's Theorem, given $\delta> 0$, there exists a subset $\Adelta \subset \Omega$ such that $I_h \{|\buh|^2 \} \rightarrow |\bu|^2$ uniformly on $\Adelta$ and $\left| \Omega \setminus \Adelta \right| \le \delta$. Hence,
\begin{align*}
\lim_{h\rightarrow0} \left| \int_{\Adelta} |\nabla \phih|^2 \left[ I_h \left\{ |\buh|^2 \right\}- |\bu|^2 \right] \right| &\le \lim_{h\rightarrow0} \norm{ I_h \left\{ |\buh|^2 \right\} - |\bu|^2}{L^\infty(\Adelta)} \int_{\Adelta} |\nabla \phih |^2
\\
&\le \lim_{h\rightarrow0} \norm{ I_h \left\{ |\buh|^2 \right\} - |\bu|^2}{L^\infty(\Adelta)} \int_\Omega |\nabla \phih |^2
\\
&\le C\Lambda \lim_{h\rightarrow0} \norm{ I_h \left\{ |\buh|^2 \right\} - |\bu|^2}{L^\infty(\Adelta)}
\\
&=0.
\end{align*}
Similarly, there exists a subset $\tilde{\Adelta} \subset \Omega$ such that $I_h \left\{ \buh \otimes \buh\right\} \rightarrow \bu \otimes \bu$ uniformly on $\tilde{\Adelta}$ and $| \Omega \setminus \tilde{\Adelta} | \le \delta$. Hence
\begin{align*}
\lim_{h\rightarrow0} \left| \int_{\tilde{\Adelta}} \nabla \phih \cdot \left[ \bu \otimes \bu -  I_h \left\{ \buh \otimes \buh \right\} \right]\nabla \phih \right| &\le \lim_{h\rightarrow0} \norm{\bu \otimes \bu -  I_h \left\{  \buh \otimes \buh\right\}}{L^\infty(\tilde{\Adelta})} \int_{\tilde{\Adelta}} |\nabla \phih |^2
\\
&\le \lim_{h\rightarrow0} \norm{\bu \otimes \bu -  I_h \left\{ \buh \otimes \buh\right\}}{L^\infty(\tilde{\Adelta})} \int_\Omega |\nabla \phih |^2
\\
&\le C\Lambda \lim_{h\rightarrow0} \norm{\bu \otimes \bu -  I_h \left\{ \buh \otimes \buh\right\}}{L^\infty(\tilde{\Adelta})}
\\
&=0.
\end{align*}
Let $B_\delta := \Adelta \cap \tilde{\Adelta}$. Then $|\Omega \setminus B_\delta| =  |(\Omega \setminus \Adelta) \cup (\Omega \setminus \tilde{\Adelta})| \le 2\delta$. Hence,
\begin{align*}
\liminf_{h\rightarrow0} \int_{\Omega} I_h \left\{ |\buh|^2 |\nabla \phih|^2 - (\nabla \phih \cdot \buh)^2 \right\}&\ge \liminf_{h\rightarrow0} \int_{B_\delta}I_h \left\{|\buh|^2 |\nabla \phih|^2 - (\nabla \phih \cdot \buh)^2 \right\}
\\
=&\, \liminf_{h\rightarrow0} \int_{B_\delta} |\nabla \phih|^2 |\bu|^2 - (\nabla \phih \cdot \bu)^2
\\
&+ \liminf_{h\rightarrow0} \int_{B_\delta} |\nabla \phih|^2 \left[I_h \left\{|\buh|^2\right\} - |\bu|^2 \right]
\\
&-\liminf_{h\rightarrow0} \int_{B_\delta}  \nabla \phih \cdot \left[ I_h \left\{\buh \otimes \buh \right\} - \bu \otimes \bu\right] \nabla \phih
\\
\ge&\, \liminf_{h\rightarrow0} \int_{B_\delta} |\nabla \phih|^2 |\bu|^2 - (\nabla \phih \cdot \bu)^2
\\
\ge& \int_{B_\delta} |\nabla \phi|^2 |\bu|^2 - (\nabla \phi \cdot \bu)^2,
\end{align*}
for all $\delta > 0$, where we have used weak lower semi-continuity \cite{evans1988partial}. Using Lebesgue's dominated convergence theorem and allowing $\delta \rightarrow 0$ gives the desired result.

(\underline{Part 2: Lim-sup inequality})

For the lim-sup inequality, we will construct a sequence that verifies the inequality \eqref{limsup}.  Indeed, we will actually show equality with a limit.

Invoking Lemma \ref{lem:recov_seq_Ericksen}, there exists sequences $(s_h,\buh) \in \Admerkh(g_h,\br_h)$ and $\bnh \in \Nh$ such that
\begin{equation*}
  \norm{(s_h,\buh) - (s,\bu)}{H^1(\Omega)} \to 0, \quad \norm{\bnh - \bn}{L^2(\Omega \setminus \Sing)} \to 0,
\end{equation*}
and $\Eerk(s,\bn) = \lim_{h \to 0} \Eerk^h(s_h,\bnh)$.  For the Ericksen double-well, $\Edw(s)$, since $-1/2 + \gamma_0 \leq s \leq 1 - \gamma_0$, $|\dwfunc (s(\bx))| \leq M$ for a.e.~$\bx \in \Omega$ for some positive constant $M$ (recall Section \ref{sec:Erk_energy}).  Thus, by Lebesgue's dominated convergence theorem, we have $\lim_{h \to 0} \Edw^h ( s_h ) = \Edw (s)$.  We shall use the sequence $(s_h,\buh)$ below to prove convergence of the weak anchoring terms.

For the phase variable $\phi \in H^1(\Omega)$, we let $\phih$ be the elliptic projection of $\phi$, i.e. $\phih$ solves
\begin{equation*}
  (\nabla \phih, \nabla \psih) = (\nabla \phi, \nabla \psih), ~~ \text{for all } \psih \in \Yh,
\end{equation*}
which implies that $\norm{\phih - \phi}{H^1(\Omega)} \to 0$.  Considering
\begin{align*}
\Ech(\phi) - \Ech(\phih) = \int_\Omega \frac{\Wchdw}{4\varepsilon} \left[(\phi^2 -1)^2 - (\phih^2 -1)^2 \right] + \frac{\Wchgd \, \varepsilon}{2} \left[ |\nabla \phi|^2 -  |\nabla \phih|^2\right],
\end{align*}
we see that $\Ech(\phih) \to \Ech(\phi)$, where we used the Sobolev embedding $H^1(\Omega) \hookrightarrow L^4(\Omega)$, for $d = 2, 3$,

Next, since $(s_h - s^*)^2 \rightarrow  (s - s^*)^2$ a.e.~in $\Omega$, and $s$ is bounded a.e.~in $\Omega$, then $|\nabla \phi|^2 (s_h - s^*)^2 \rightarrow |\nabla \phi|^2 (s - s^*)^2$ a.e.~in $\Omega$. So, by Lebesgue's Dominated Convergence theorem, we have
\begin{align*}
\lim_{h \rightarrow 0} \left| \Ewas(s,\phi) - \Ewas(s_h,\phih) \right| &= \lim_{h \rightarrow 0} \frac{\varepsilon}{2}\int_\Omega \left||\nabla \phi |^2 - |\nabla \phih|^2  \right| (s_h - s^*)^2 + \frac{\varepsilon}{2} \int_\Omega |\nabla \phi |^2 \left| (s-s^*)^2 - (s_h - s^*)^2 \right|
\\
&\le \lim_{h \rightarrow 0} \frac{\varepsilon}{2}\int_\Omega \left||\nabla \phi |^2 - |\nabla \phih|^2  \right| (s_h - s^*)^2
\\
&\quad+ \lim_{h \rightarrow 0} \frac{\varepsilon}{2} \int_\Omega |\nabla \phi |^2 \left| (s-s^*)^2 - (s_h - s^*)^2 \right|
\\
&\le \lim_{h \rightarrow 0}  \frac{C \varepsilon}{2} \int_\Omega \left| |\nabla \phi |^2 - |\nabla \phih|^2  \right| + 0 = 0.
\end{align*}
Similarly, we find that
\begin{align*}
\lim_{h \rightarrow 0} \left| \Ewan(\bu,\phi) - \Ewan^h(\buh,\phih) \right| \le&\, \lim_{h \rightarrow 0} \frac{\varepsilon}{2} \int_\Omega \left| | \nabla \phi |^2 - |\nabla \phih|^2  \right| |\bu|^2
\\
&+ \lim_{h \rightarrow 0} \frac{\varepsilon}{2} \int_\Omega |\nabla \phih|^2 \left| |\bu|^2 - I_h\{|\buh|^2\}  \right|
\\
&+ \lim_{h \rightarrow 0} \frac{\varepsilon}{2} \int_\Omega \left| \nabla \phi  + \nabla \phih \right| |\bu \otimes \bu| \left| \nabla \phi  - \nabla \phih \right|
\\
&+ \lim_{h \rightarrow 0} \frac{\varepsilon}{2} \int_\Omega |\nabla \phih| \left| \bu \otimes \bu - I_h\{\buh \otimes \buh\} \right| | \nabla \phih | = 0,
\end{align*}
where we used the earlier results: $\norm{ I_h \{ |\buh|^2 \} - |\buh|^2 }{L^2(\Omega)} = O(h)$, $\norm{ I_h \{ \buh \otimes \buh \} - \buh \otimes \buh }{L^2(\Omega)} = O(h)$.
\end{proof}

We now obtain the following corollary about convergence of global minimizers \cite{Braides_book2014, DalMaso_book1993}.
\begin{cor}[convergence of global discrete minimizers]\label{cor:convergence_global_mins}
Let $\{ \Tauh \}$ be a sequence of weakly acute meshes.
If $(s_h,\bnh,\phih) \in \Admerkh(g_h,\brh)$ is a sequence of global minimizers of $\Etot^h (s_h, \bnh, \phih)$ in \eqref{eq:energy-total-discrete}, then
every cluster point is a global minimizer of the continuous energy $\Etot(s , \bn, \phi)$ in \eqref{eq:energy-total}.
\end{cor}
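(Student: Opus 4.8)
The plan is to invoke the standard consequence of $\Gamma$-convergence---convergence of minimizers---adapted to the ``one level of indirection'' used in Theorem~\ref{thm:Gamma_convergence_main}. The argument has three stages: (i) pass to the subsequence realizing a given cluster point and secure a uniform energy bound; (ii) bound the continuous energy of the cluster point from below by $\liminf_{h\to 0}\Etot^h$ via the lim-inf inequality; and (iii) bound $\limsup_{h\to 0}\Etot^h$ from above by $\inf\Etot$ using minimality together with a recovery sequence. Chaining these inequalities forces the cluster point to be a global minimizer.

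First I would fix any continuous competitor $(s_\star,\bn_\star,\phi_\star)\in\Admall$ satisfying $-1/2+\gamma_0\le s_\star\le 1-\gamma_0$ (such configurations exist given the boundary data in \eqref{eq:admis_set_BCs}). By the lim-sup inequality of Theorem~\ref{thm:Gamma_convergence_main} there is an associated recovery sequence whose discrete energies converge and hence are bounded by some $\Lambda<\infty$. Since $(s_h,\bnh,\phih)$ are global minimizers, $\Etot^h(s_h,\bnh,\phih)\le\Lambda$ for all $h$. Applying the coercivity estimate of Lemma~\ref{lem:coercivity} to the Ericksen part, and using that the Cahn--Hilliard part controls $\norm{\nabla\phih}{L^2}^2$ and, through the double-well density, $\norm{\phih}{L^4}^4$, I obtain uniform $H^1(\Omega)$ bounds on $s_h$, $\buh=I_h(s_h\bnh)$, and $\phih$. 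Along the subsequence realizing the cluster point, the corresponding elements $(s_h,\buh,\phih)$ converge strongly in $\X$ and (after a further subsequence) a.e., to $(s,\bu,\phi)$; exactly as in the lim-inf part of Theorem~\ref{thm:Gamma_convergence_main} and in \cite{nochetto2017finite}, the discrete structural condition \eqref{eq:struct_condition_discrete} passes to the limit, so $\bu=s\bn$ with $|\bn|=1$ a.e.\ and $(s,\bn,\phi)\in\Admall$.

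Applying the lim-inf inequality \eqref{liminf} along this subsequence gives
\begin{equation*}
  \Etot(s,\bn,\phi)\le\liminf_{h\to 0}\Etot^h(s_h,\bnh,\phih).
\end{equation*}
For the matching upper bound, let $(s',\bn',\phi')\in\Admall$ be an arbitrary continuous competitor and fix $\eta>0$. Because $\Edw$ blows up as $s\to\{-1/2,1\}$, a finite-energy $s'$ can be truncated to some $s'_\eta$ satisfying $-1/2+\gamma_0\le s'_\eta\le 1-\gamma_0$ (with $\gamma_0=\gamma_0(\eta)$), admissible and with $\Etot(s'_\eta,\bn',\phi')\le\Etot(s',\bn',\phi')+\eta$. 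The lim-sup inequality supplies a recovery sequence $(s_h',\bnh',\phih')$ with $\limsup_{h\to 0}\Etot^h(s_h',\bnh',\phih')\le\Etot(s'_\eta,\bn',\phi')$, and minimality of $(s_h,\bnh,\phih)$ yields $\Etot^h(s_h,\bnh,\phih)\le\Etot^h(s_h',\bnh',\phih')$. Hence
\begin{equation*}
  \limsup_{h\to 0}\Etot^h(s_h,\bnh,\phih)\le\Etot(s',\bn',\phi')+\eta.
\end{equation*}
Letting $\eta\to 0$ and combining with the lim-inf bound gives $\Etot(s,\bn,\phi)\le\Etot(s',\bn',\phi')$ for every competitor, i.e.\ the cluster point is a global minimizer of $\Etot$.

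The main obstacle I anticipate is reconciling the $\gamma_0$-restriction in Theorem~\ref{thm:Gamma_convergence_main} with the need to test against \emph{arbitrary} competitors: the recovery (lim-sup) inequality is only asserted for $s$ bounded strictly inside $(-1/2,1)$, so the truncation step $s'\mapsto s'_\eta$---preserving the structural condition and the Dirichlet data of \eqref{eqn:discrete_spaces_BC} while controlling the energy increment---is the one place demanding genuine care rather than a citation. A secondary technical point is verifying that the cluster point satisfies $\bu=s\bn$ with $|\bn|=1$ a.e., which hinges on the a.e.\ convergence produced in the compactness step together with the limit passage of the discrete structural condition \eqref{eq:struct_condition_discrete}.
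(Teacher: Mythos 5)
Your overall strategy is the paper's: a uniform energy bound plus compactness to identify the cluster point, the lim-inf inequality of Theorem \ref{thm:Gamma_convergence_main} applied to that cluster point, and the lim-sup inequality combined with discrete minimality to cap the energy. (Your explicit test against \emph{arbitrary} competitors is actually slightly more complete than the paper's written chain, which only invokes a recovery sequence for the cluster point itself and leaves the comparison with competitors implicit.) However, there is one genuine gap, and it sits on the lim-inf side, not on the lim-sup side you flagged. Theorem \ref{thm:Gamma_convergence_main} asserts \emph{both} inequalities only for triples satisfying $-1/2+\gamma_0 \leq s \leq 1-\gamma_0$ a.e. Your compactness step delivers only the structural condition in the limit, i.e. $-1/2 < s < 1$ a.e.; nothing in your argument prevents the cluster point's $s$ from coming arbitrarily close to $-1/2$ or $1$, so you are not entitled to cite \eqref{liminf} for it as stated.

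The paper closes exactly this hole before doing anything else: it introduces the truncation $\Theta(f) := \max\{-1/2+\gamma_0, \min\{1-\gamma_0, f\}\}$ and observes that, because the boundary datum $g$ (recall \eqref{eq:g_pos}) and the constant $s^*$ are bounded away from $-1/2$ and $1$, truncation does not increase \emph{any} component of the energy, continuous or discrete. Since the $(s_h,\bnh,\phih)$ are global minimizers, replacing $s_h$ by $\Theta(s_h)$ yields global minimizers again, so without loss of generality $-1/2+\gamma_0 \leq s_h \leq 1-\gamma_0$ uniformly in $h$; the cluster point then inherits this bound and Theorem \ref{thm:Gamma_convergence_main} legitimately applies to it. The same observation repairs, and in fact simplifies, the step you identified as your main obstacle: the truncation of a competitor $(s',\bn',\phi')$ is \emph{exactly} energy-decreasing, $\Etot(\Theta(s'),\bn',\phi') \leq \Etot(s',\bn',\phi')$, while preserving the admissible class and the Dirichlet data, so no $\eta$-approximation argument is needed at all. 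With the $\Theta$-truncation inserted at the start (applied to the discrete minimizers, not merely to competitors), your proof goes through and is otherwise the paper's argument.
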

\begin{proof}
First note that, because of the form of the energy (both continuous and discrete), we can always truncate $s$ and $s_h$ with the function
\begin{equation*}
  \Theta(f) := \max \{ -1/2 + \gamma_0, \min \{ 1 - \gamma_0, f \} \},
\end{equation*}
for some fixed constant $\gamma_0 > 0$ sufficiently small.  Since the boundary condition $g$ (for $s$) is bounded away from $-1/2$ and $1$ (recall \eqref{eq:struct_condition} and \eqref{eq:g_pos}), one can show that
\begin{equation*}
\begin{split}
  \Eerk(\Theta(s),\bn) &\leq \Eerk(s,\bn), \quad \Edw(\Theta(s)) \leq \Edw(s), \\
  \Ewan(\Theta(s),\bn,\phi) &\leq \Ewan(s,\bn,\phi), \quad \Ewas(\Theta(s),\phi) \leq \Ewas(s,\phi),
\end{split}
\end{equation*}
where we use the fact that $(\Theta (s) - s^*)^2 \leq (s - s^*)^2$ provided $s^*$ is bounded away from $-1/2$ and $1$.  The same holds for the discrete energies as well.  Thus, without loss of generality, we assume the discrete minimizers obey $-1/2 + \gamma_0 \leq s_h \leq 1 - \gamma_0$ for $\gamma_0$ sufficiently small.

Next, we take $\Etot^h (s_h, \bnh, \phih) \leq \Lambda$ for all $h > 0$, where $0 < \Lambda < \infty$ is a fixed constant.  Using \cite[Lem 3.6]{nochetto2017finite} we obtain convergent subsequences $\{\phih\}, \{s_h\}, \{\buh\}$ (not relabeled) such that
\begin{align*}
\begin{split}
s_h \rightharpoonup s& \text{ in } H^1(\Omega), \quad \buh \rightharpoonup \bu \text{ in } H^1(\Omega), \quad \phih \rightharpoonup \phi \text{ in } H^1(\Omega),
\\
s_h \rightarrow s& \text{ in } L^2(\Omega), \quad \buh \rightarrow \bu \text{ in } L^2(\Omega), \quad \phih \rightarrow \phi \text{ in } L^2(\Omega),
\\
s_h \rightarrow s& \text{ a.e. in } \Omega, \quad \buh \rightarrow \bu \text{ a.e. in } \Omega, \quad \phih \rightarrow \phi \text{ a.e. in } \Omega.
\end{split}
\end{align*}
Moreover, \cite[Lem 3.6]{nochetto2017finite} implies there is a subsequence $\{ \bnh \}$ (not relabeled), and $\bn \in L^2(\Omega)$ with $|\bn|=1$ a.e., such that $\Nh \ni \bnh \rightarrow \bn$ in $L^2(\Omega \setminus \Sing)$, $\bnh \rightarrow \bn$ a.e.~in $\Omega \setminus \Sing$, and $\bu = s \bn$ a.e.~in $\Omega$.  Thus, $(s,\bu) \in \Admerk(g,\br)$.  So the subsequence $(s_h, \buh, \phih)$ of minimizers converges to a limit in $\X$.

Therefore, $(s,\bn,\phi)$ and the corresponding $(s,\bu,\phi)$ satisfies the conditions of Theorem \ref{thm:Gamma_convergence_main}, so we obtain that $\Etot (s, \bn, \phi) \leq \liminf_{h \rightarrow 0} \Etot^h (s_h, \bnh, \phih)$.  Moreover, there exists a sequence $\{ (\tilde{s}_h, \tilde{\bn}_h, \tilde{\phi}_h) \}$, and corresponding sequence $\{ (\tilde{s}_h, \tilde{\bu}_h, \tilde{\phi}_h) \}$ such that $(\tilde{s}_h, \tilde{\bu}_h, \tilde{\phi}_h) \to (s,\bu,\phi)$ in $\X$, and
\begin{equation*}
  \Etot (s, \bn, \phi) \leq \liminf_{h \rightarrow 0} \Etot^h (s_h, \bnh, \phih) \leq \limsup_{h\rightarrow 0} \Etot^h (\tilde{s}_h, \tilde{\bn}_h, \tilde{\phi}_h) \leq \Etot (s, \bn, \phi).
\end{equation*}
Hence, $\Etot (s, \bn, \phi) = \lim_{h \rightarrow 0} \Etot^h (s_h, \bnh, \phih)$, i.e. the limit of discrete global minimizers is a global minimizer.
\end{proof}
Note: this convergence result does not yield a rate of convergence, though first order is expected in most situations (see \cite{Nochetto_JCP2017} for an example).

\section{Numerical Experiments}
\label{sec:numerical-experiments}

In the experiments to follow, we use a square domain $\Omega = (0,1)^2\subset \mathbb{R}^2$ and take ${\mathcal T}_h$ to be a regular triangulation of $\Omega$ consisting of right isosceles triangles. (We note that the analysis presented in the previous sections holds for both $d=2$ and $d=3$.) To solve the linear systems in \eqref{eq:cherk-scheme-bn-discrete}--\eqref{eq:cherk-scheme-s-discrete}, we used MATLAB's ``backslash'' command and used a standard Newton's method algorithm to solve the system \eqref{eq:cherk-scheme-phi-discrete}--\eqref{eq:cherk-scheme-mu-discrete} with a tolerance of $10^{-15}$ or a residual tolerance of $10^{-7}$, whichever is satisfied first. To solve the linear system within Newton's method, we again used MATLAB's ``backslash'' command. In each experiment, the double well potential related to the orientation parameter is defined as $f(s) = f_c(s) - f_e(s) = 63.0s^2 - (-16.0s^4 + 21.33333333333s^3 + 57.0s^2)$ with $s^* = 0.750025$. All computations are completed using the FELICITY MATLAB/C++ Toolbox \cite{FELICITY_REF}.

\subsection{Movement of a Liquid Crystal Droplet}
\label{sec:move-LC-droplet}

The first numerical experiment demonstrates the movement of a liquid crystal droplet. The initial conditions are as follows:
\begin{align*}
s_h^0 &= s^*,
\\
\bnh^0 &= \frac{(x,y) - (0.26,0.25)}{|(x,y) - (0.26,0.25)|},
\\
\phih^0 &= I_h\left\{-\tanh\left(\frac{(x-0.25)^2 / 0.02 + (y-0.25)^2 / 0.02 - 1}{2\varepsilon}\right)\right\}.
\end{align*}
The following Dirichlet boundary conditions on $\partial \Omega$ are imposed for $s$ and $\bn$:
\begin{align*}
s = s^*, \quad \bnh &= \frac{(x,y) - (0.85,0.85)}{|(x,y) - (0.85,0.85)|}.
\end{align*}
The relevant parameters are $\kappa = 1, \rho = 1, \Werk =  1, \Wdw = 100, \Wchdw = 1, \Wchgd = 1 + \Wwan + \Wwas = 41, \Wwas = 20, \Wwan = 20$. The space step size is taken to be $h = \sqrt{2}/64$ and the time step size is taken to be $\tau = 0.002$ with a final stopping time of $T=20.0$. The interfacial width parameter is taken to be $\varepsilon = 3h / \sqrt{2}$. Figure \ref{fig:droplet-moving} shows the evolution of the droplet over time. The top two rows display the evolution of the scalar degree of orientation parameter $s$. The bottom two rows show the evolution of the phase field parameter $\phi$ and the director field $\bn$.  This example shows that the droplet position can be manipulated by choosing appropriate boundary conditions.

\begin{figure}
\subfloat{\includegraphics[width = 2in]{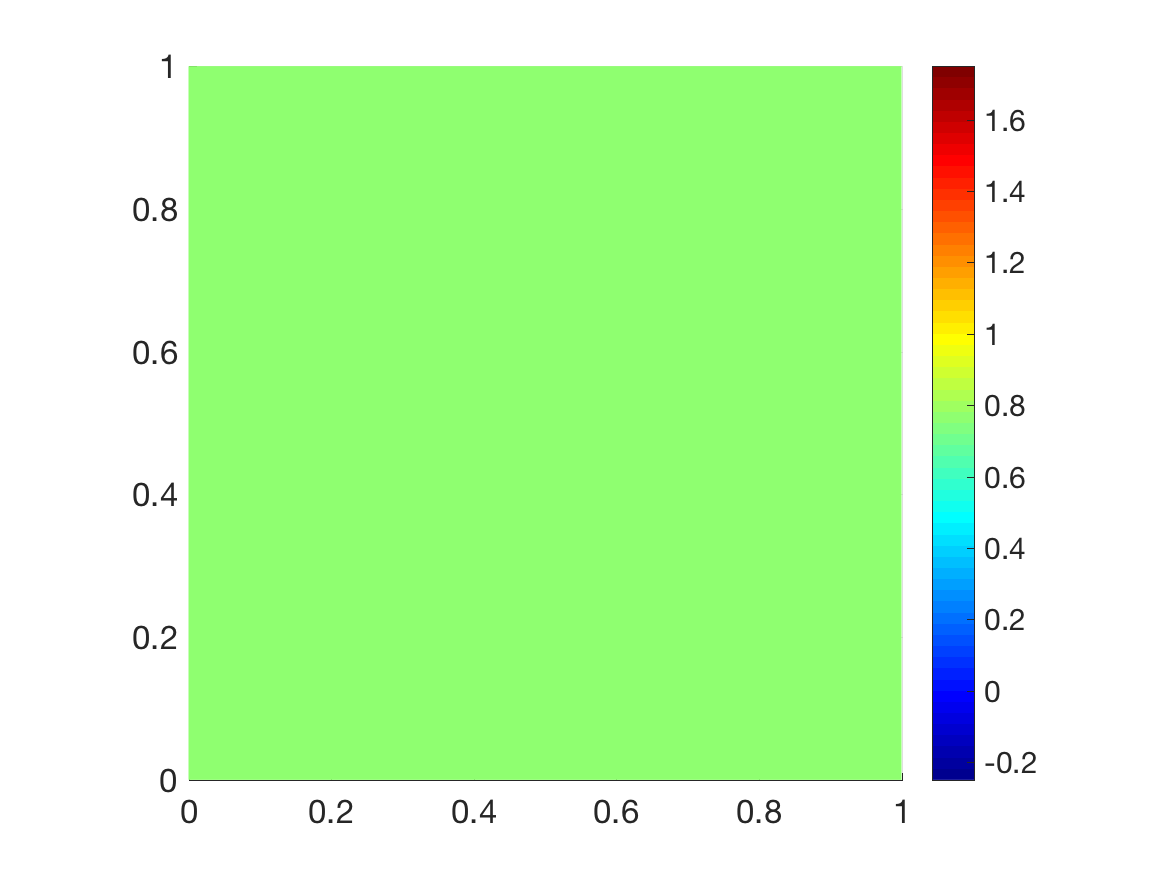}} \hspace{-2em}
\subfloat{\includegraphics[width = 2in]{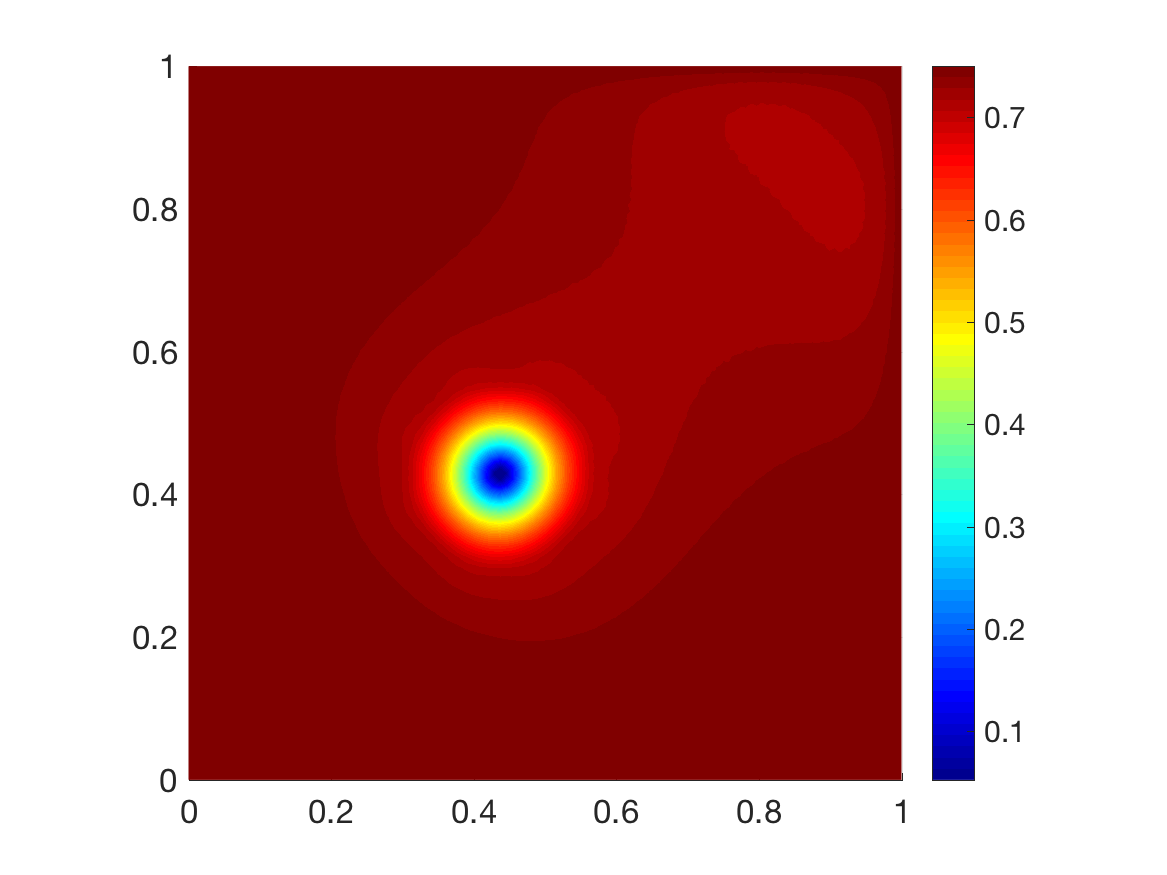}} \hspace{-2em}
\subfloat{\includegraphics[width = 2in]{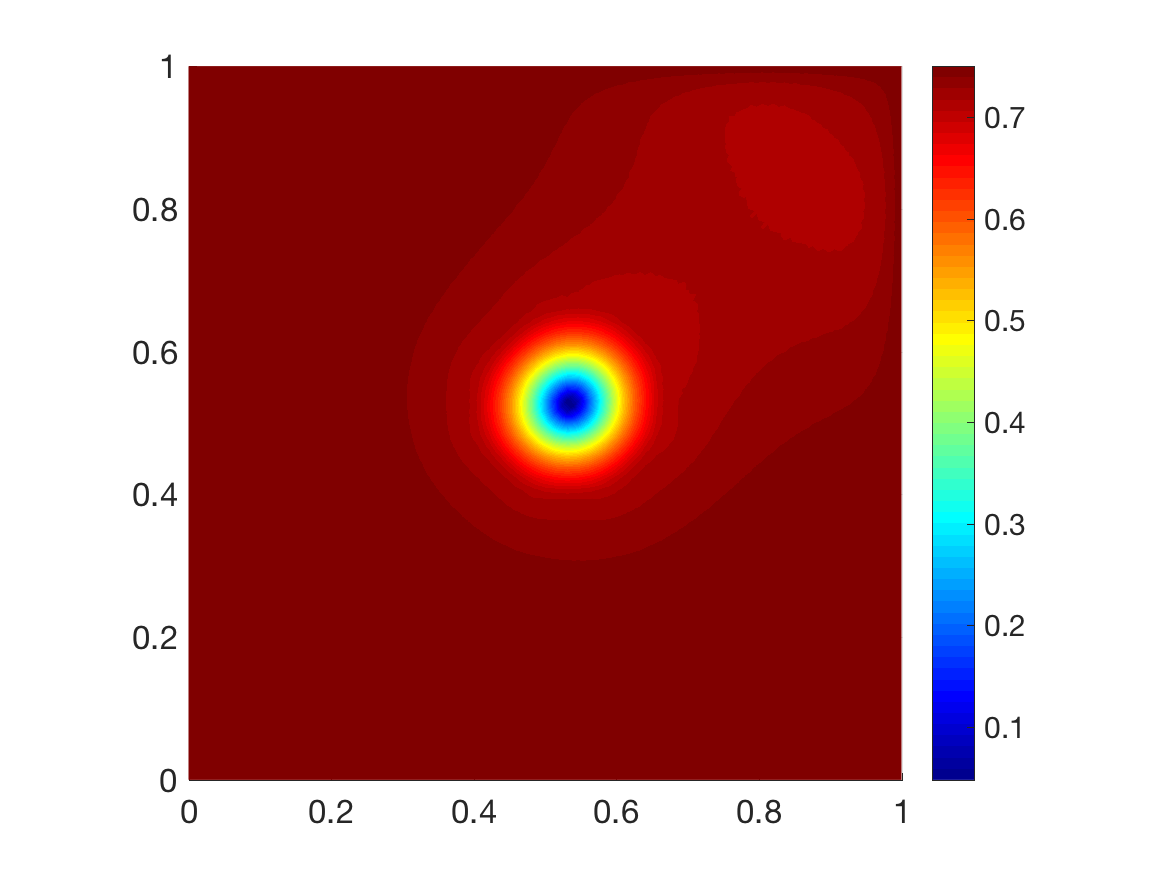}} \\[-4ex]
\subfloat{\includegraphics[width = 2in]{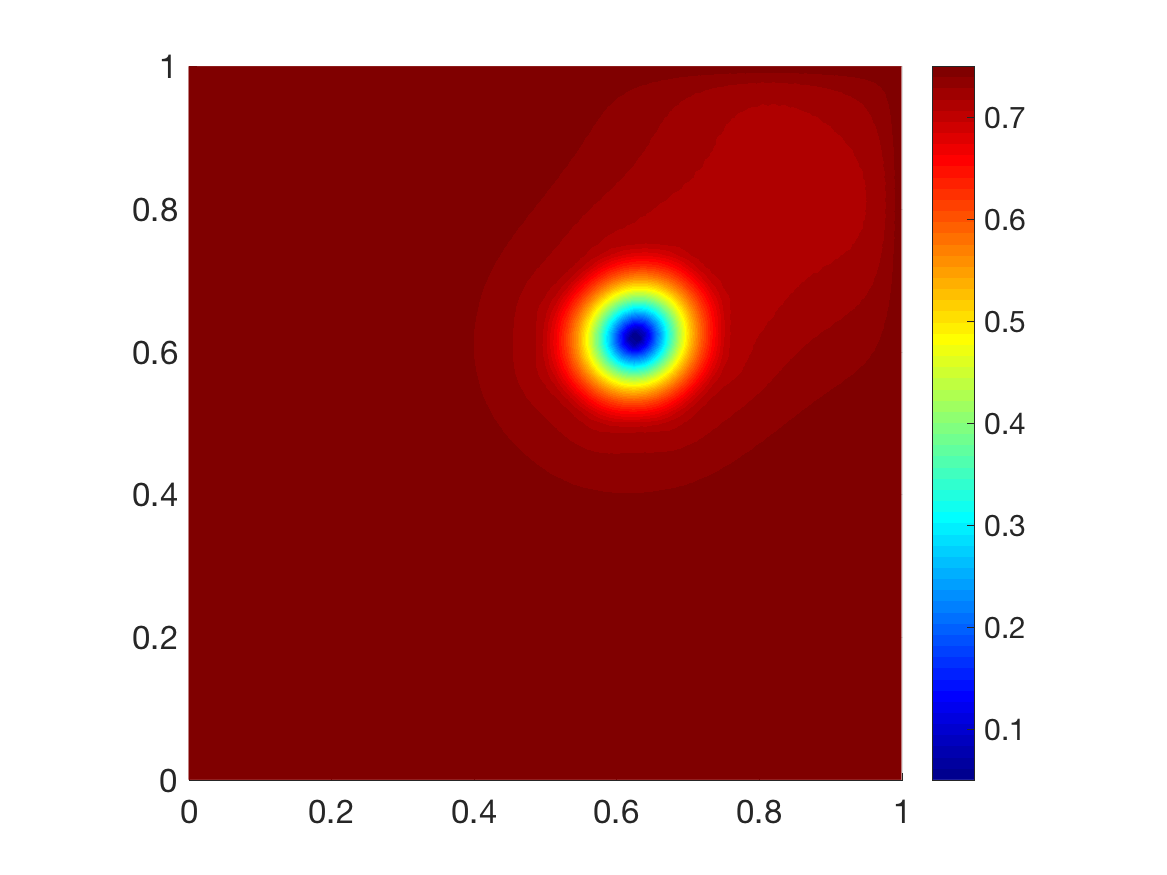}} \hspace{-2em}
\subfloat{\includegraphics[width = 2in]{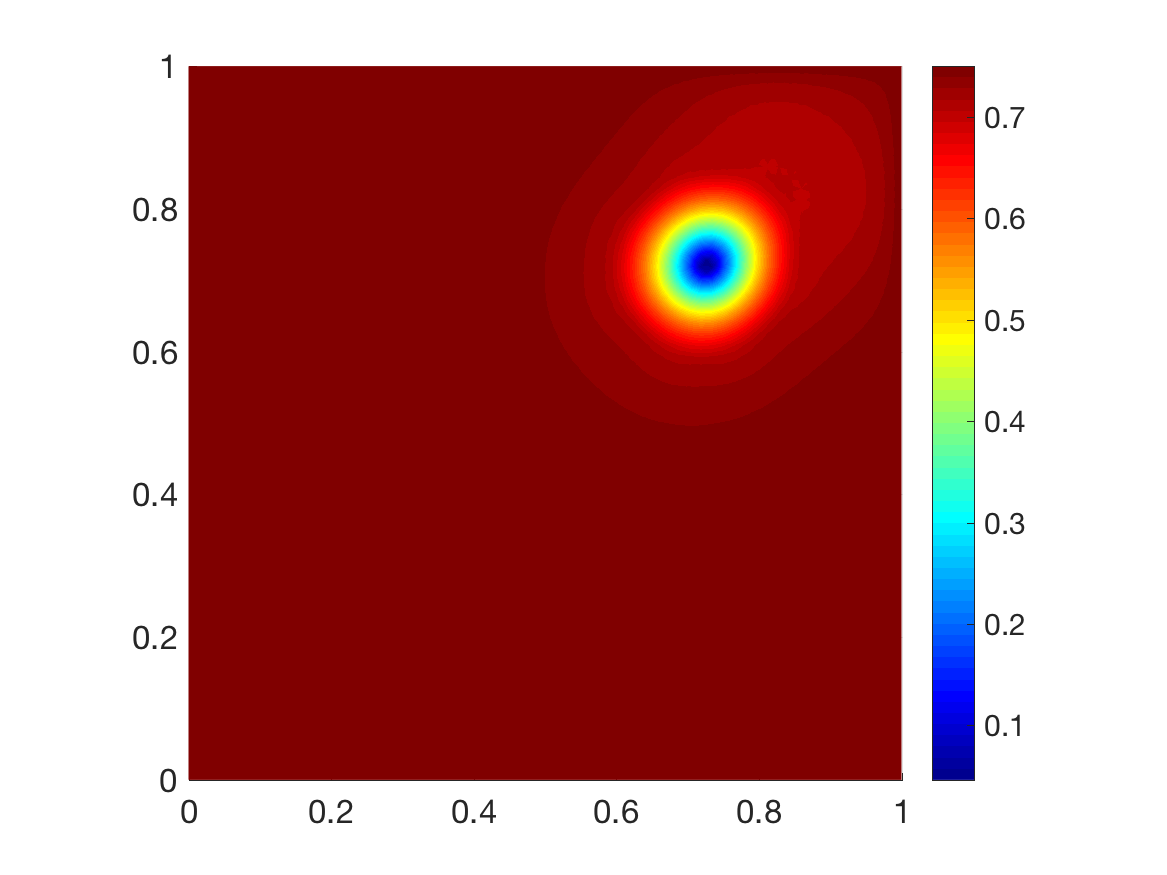}} \hspace{-2em}
\subfloat{\includegraphics[width = 2in]{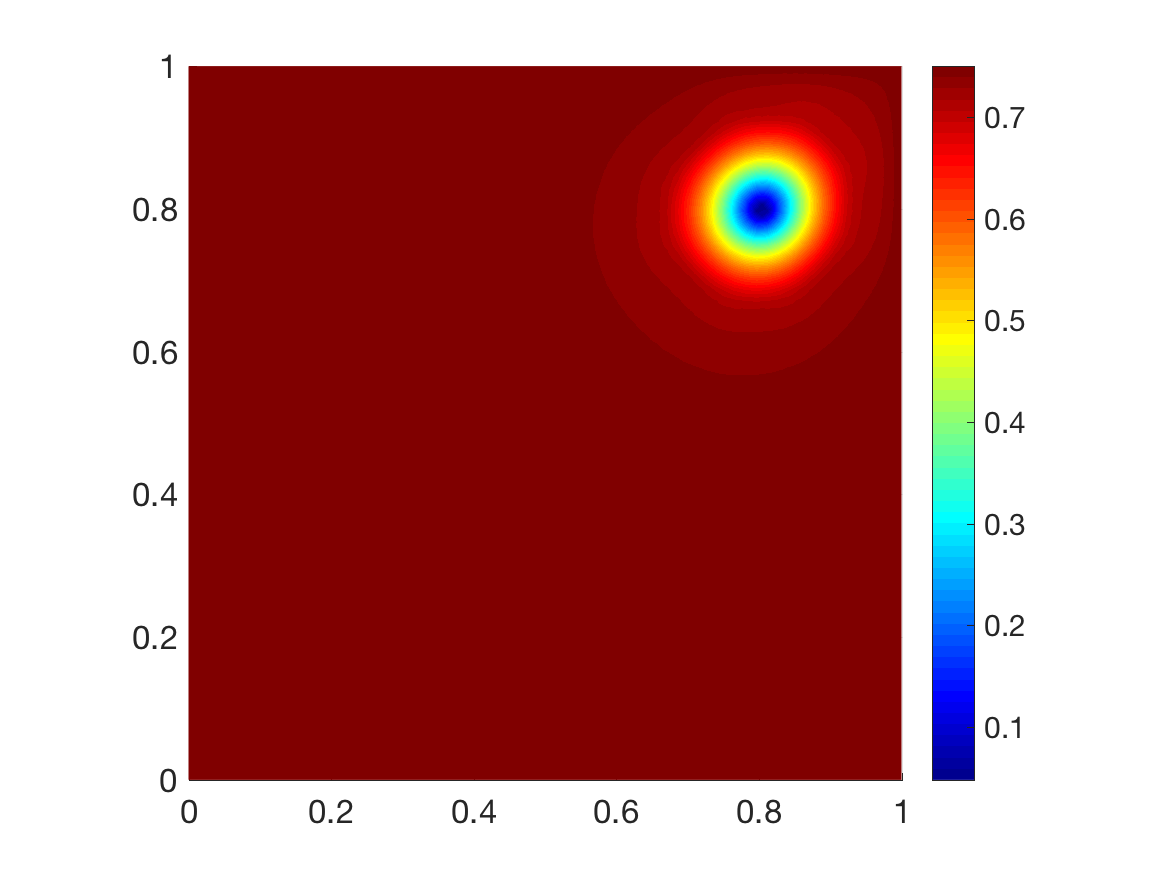}}\\[-4ex]
\subfloat{\includegraphics[width = 2in]{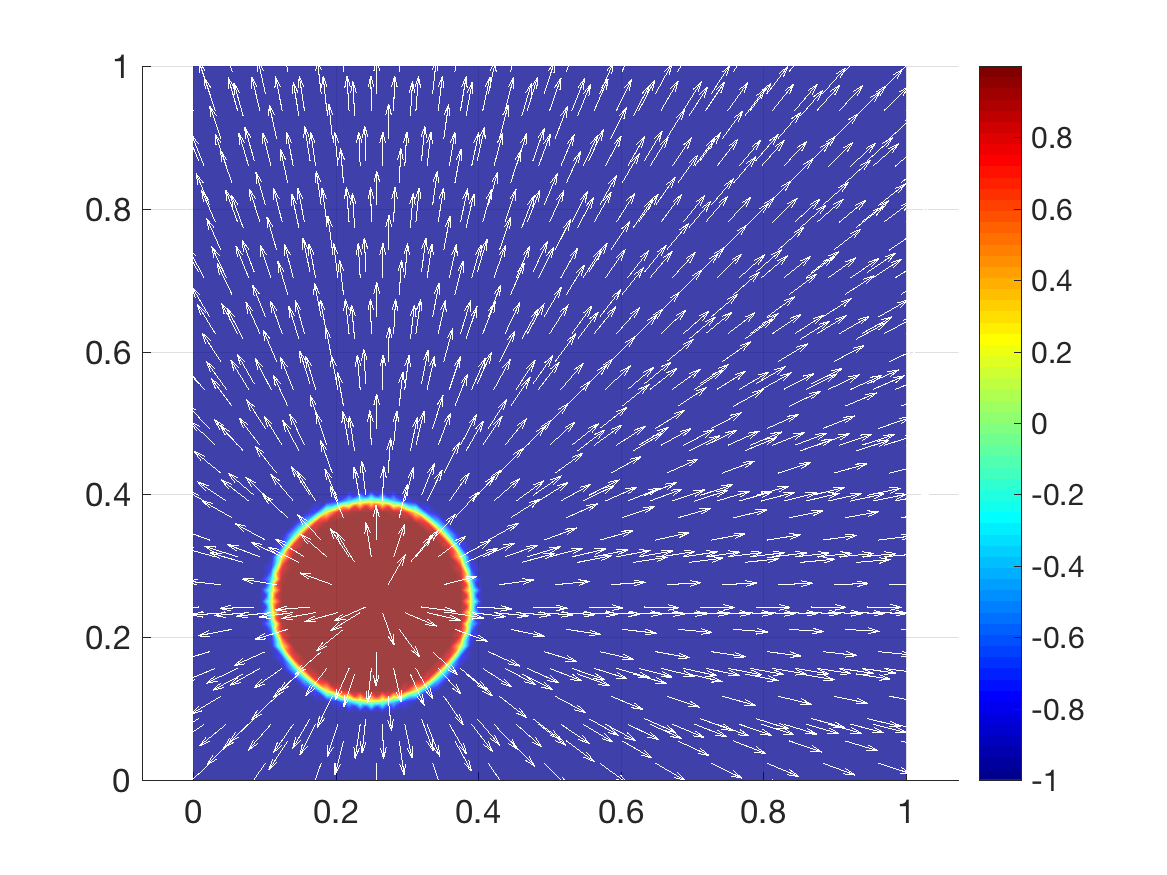}} \hspace{-2em}
\subfloat{\includegraphics[width = 2in]{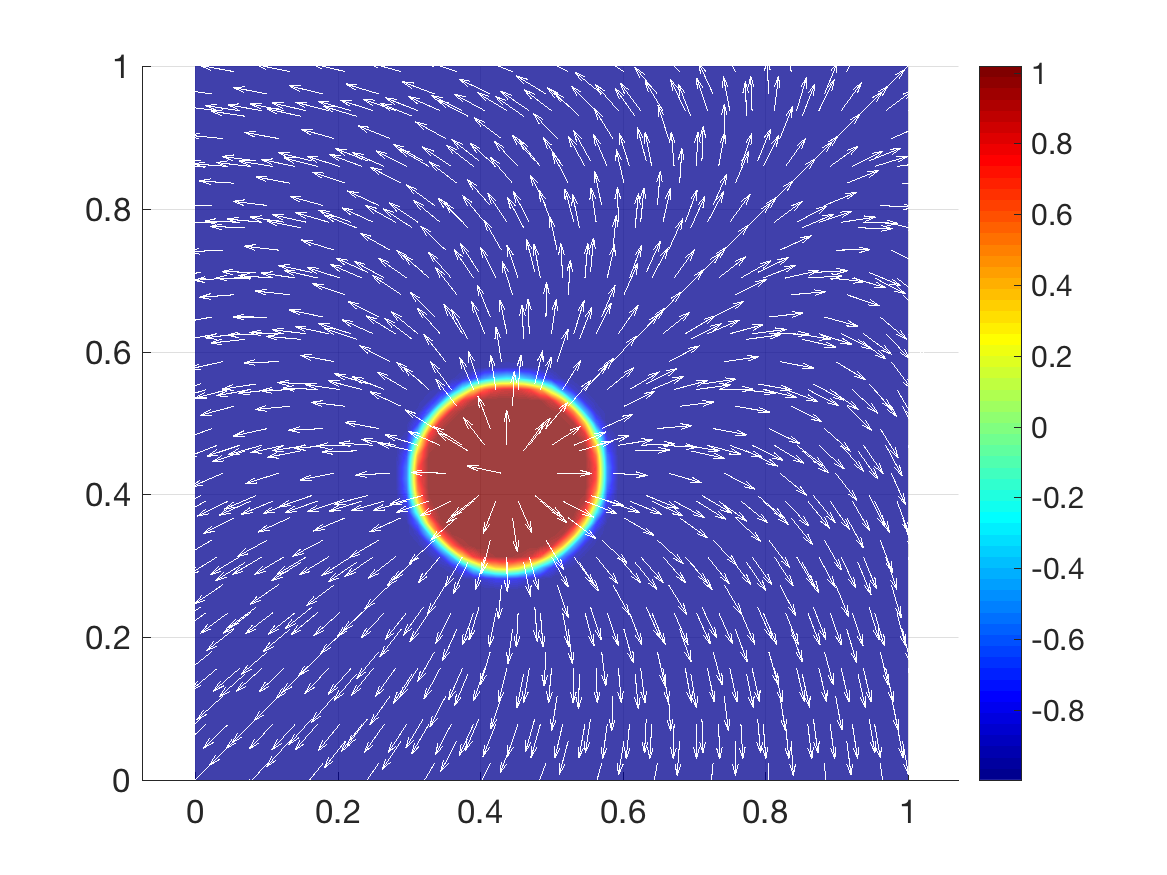}} \hspace{-2em}
\subfloat{\includegraphics[width = 2in]{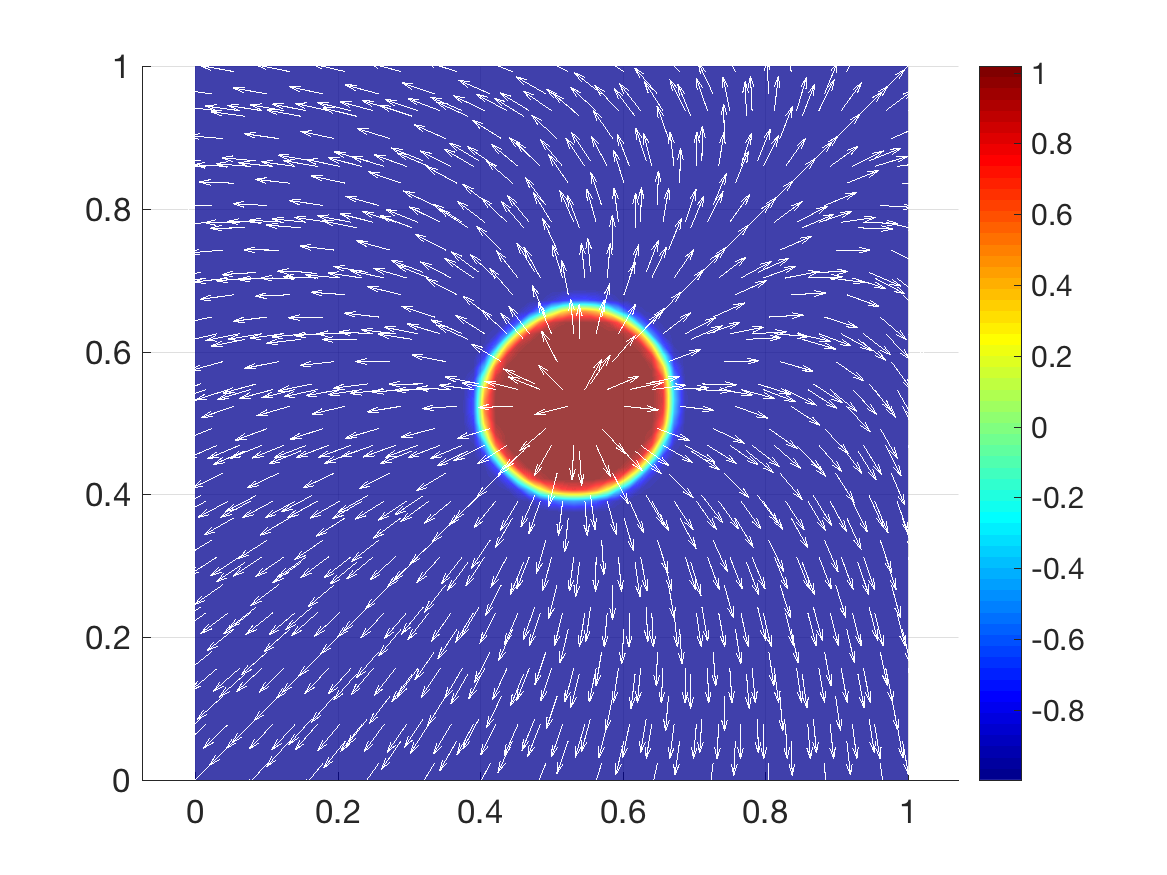}} \\[-4ex]
\subfloat{\includegraphics[width = 2in]{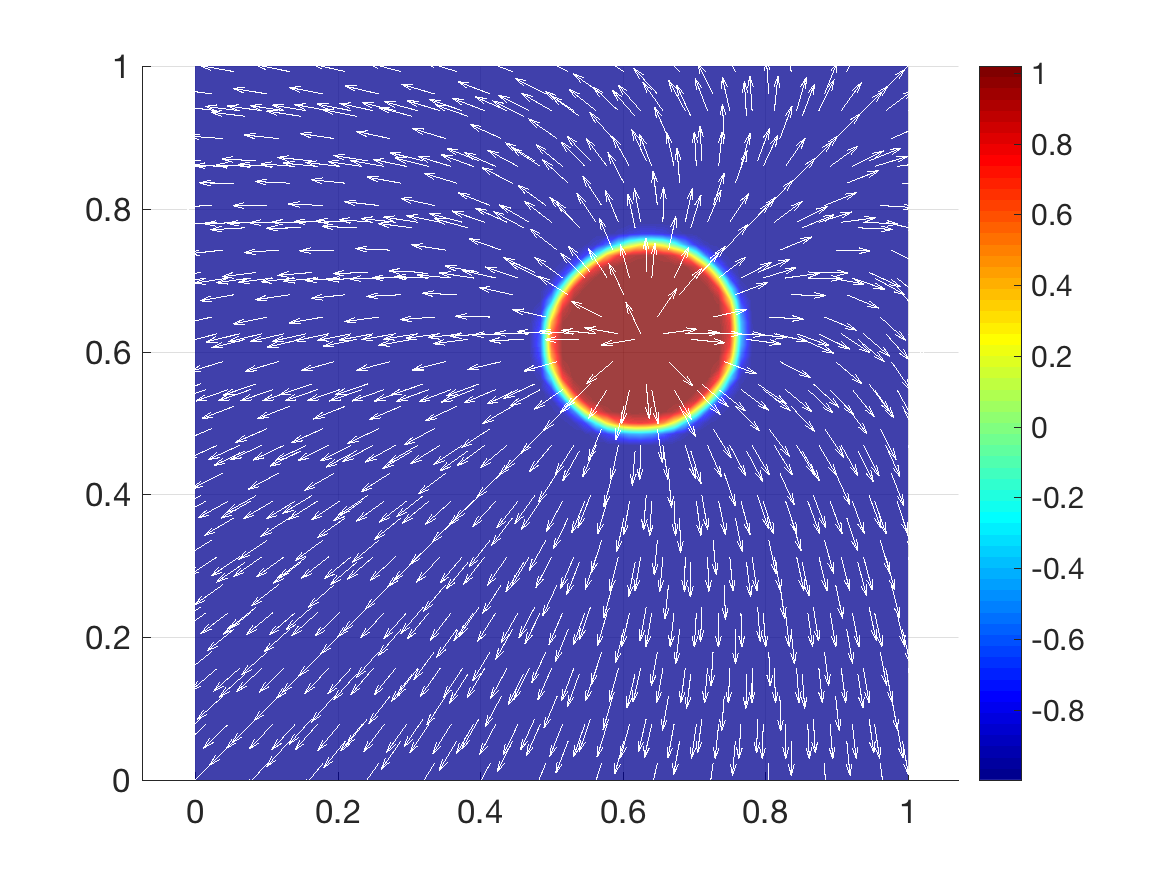}} \hspace{-2em}
\subfloat{\includegraphics[width = 2in]{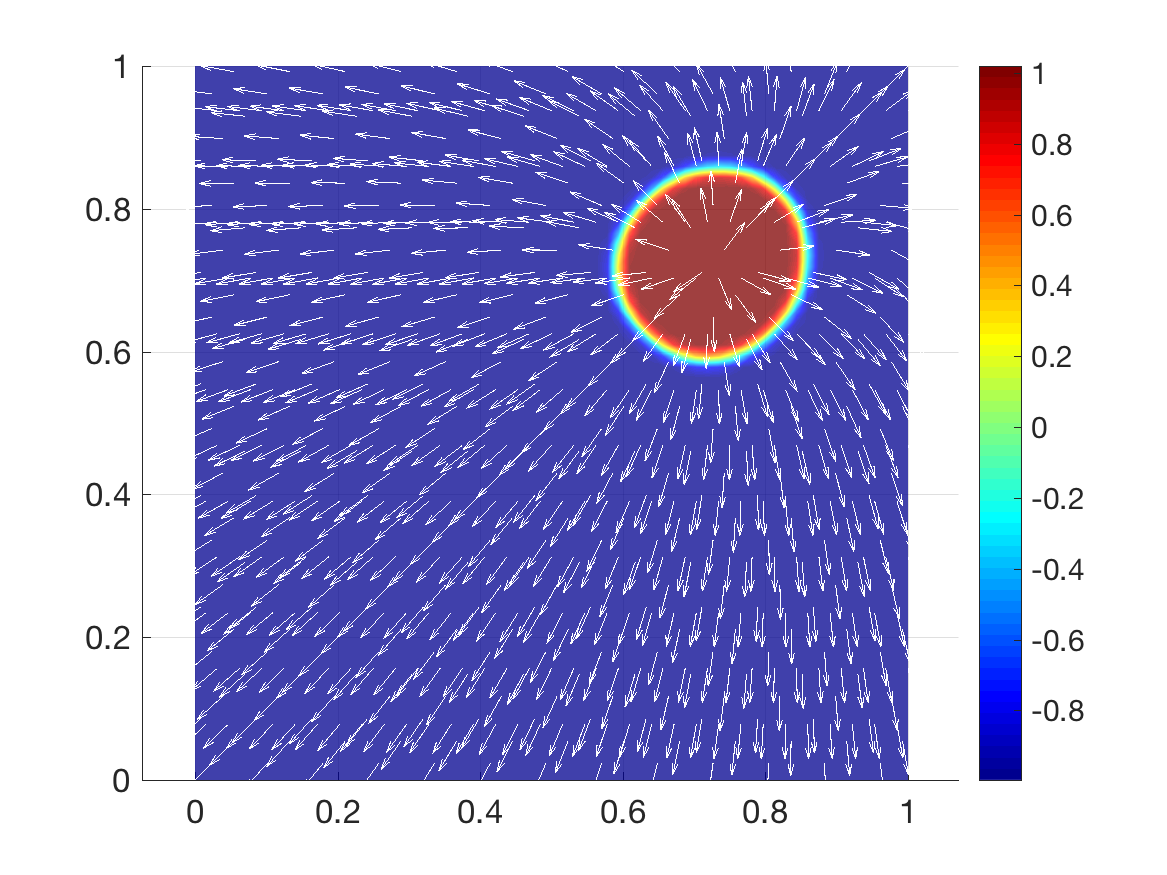}} \hspace{-2em}
\subfloat{\includegraphics[width = 2in]{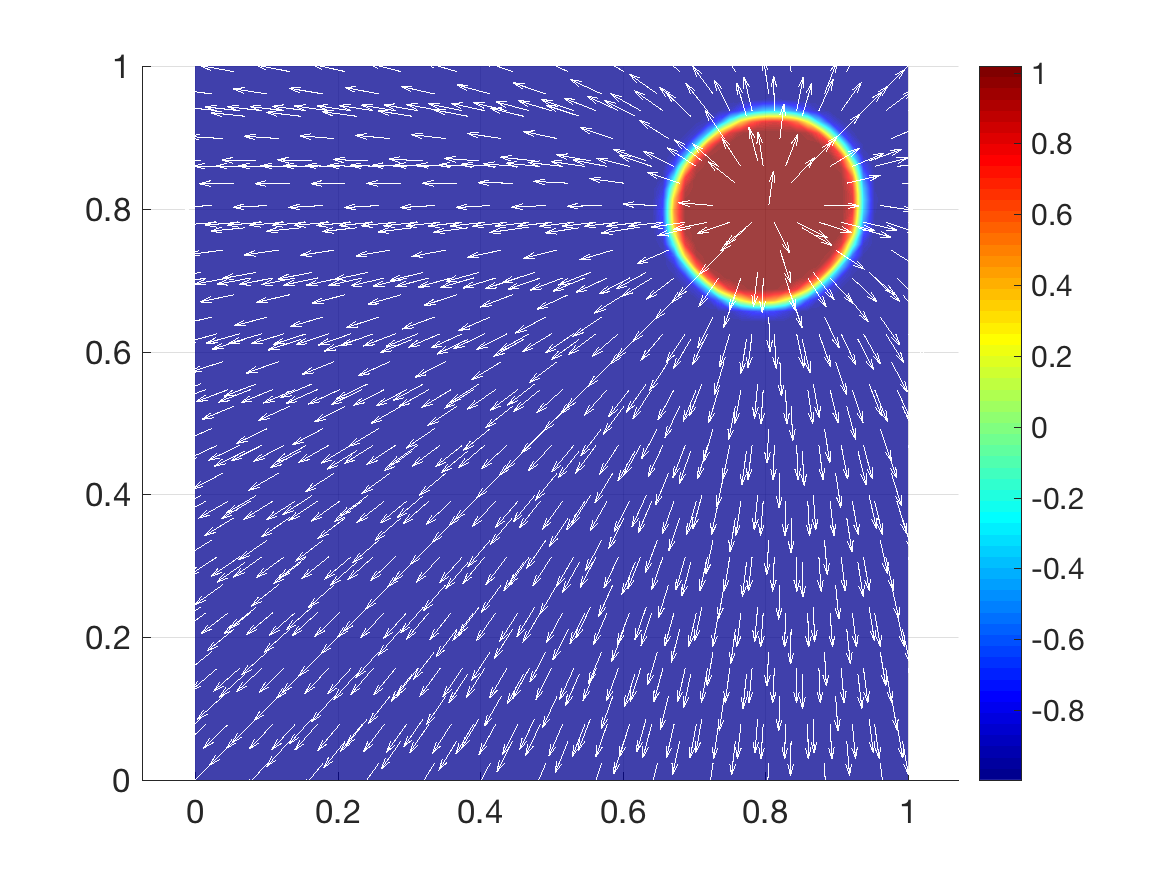}}
\caption{Droplet moving, $\Omega = [0,1]\times [0,1]$, $h = \sqrt{2} / 64$, $\tau = 0.002$ (Section \ref{sec:move-LC-droplet}). The times displayed are $t=0, t=4, t=8$ (top from left to right) and $t=12, t=16, t=20$ (bottom from left to right).}
\label{fig:droplet-moving}
\end{figure}

Figure \ref{fig:droplet-moving-energy} displays the energy decreasing property of the scheme for this experiment. We point out that the energy decreases dramatically at the beginning of the simulation due to the droplet adjusting to its equilibrium shape but then levels off.

\begin{figure}
\subfloat{\includegraphics[width = 3in]{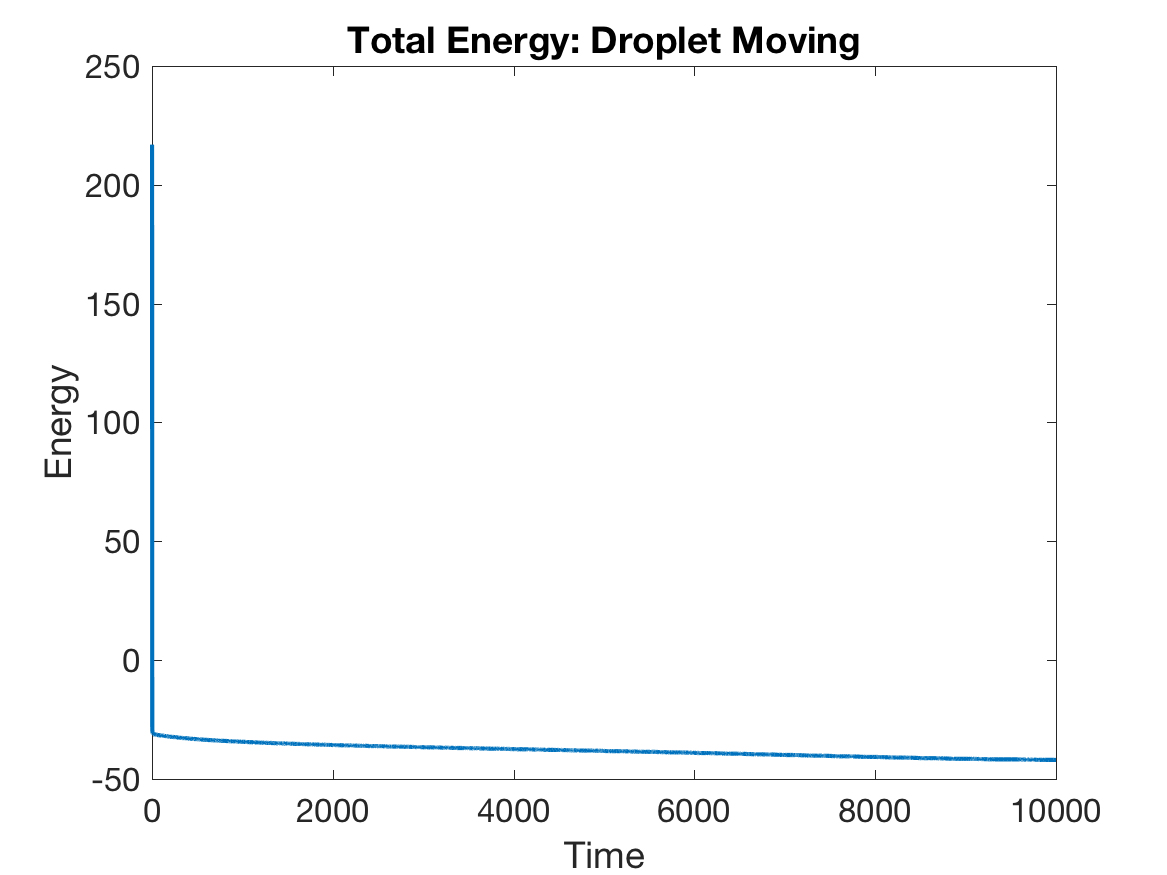}}
\subfloat{\includegraphics[width = 3in]{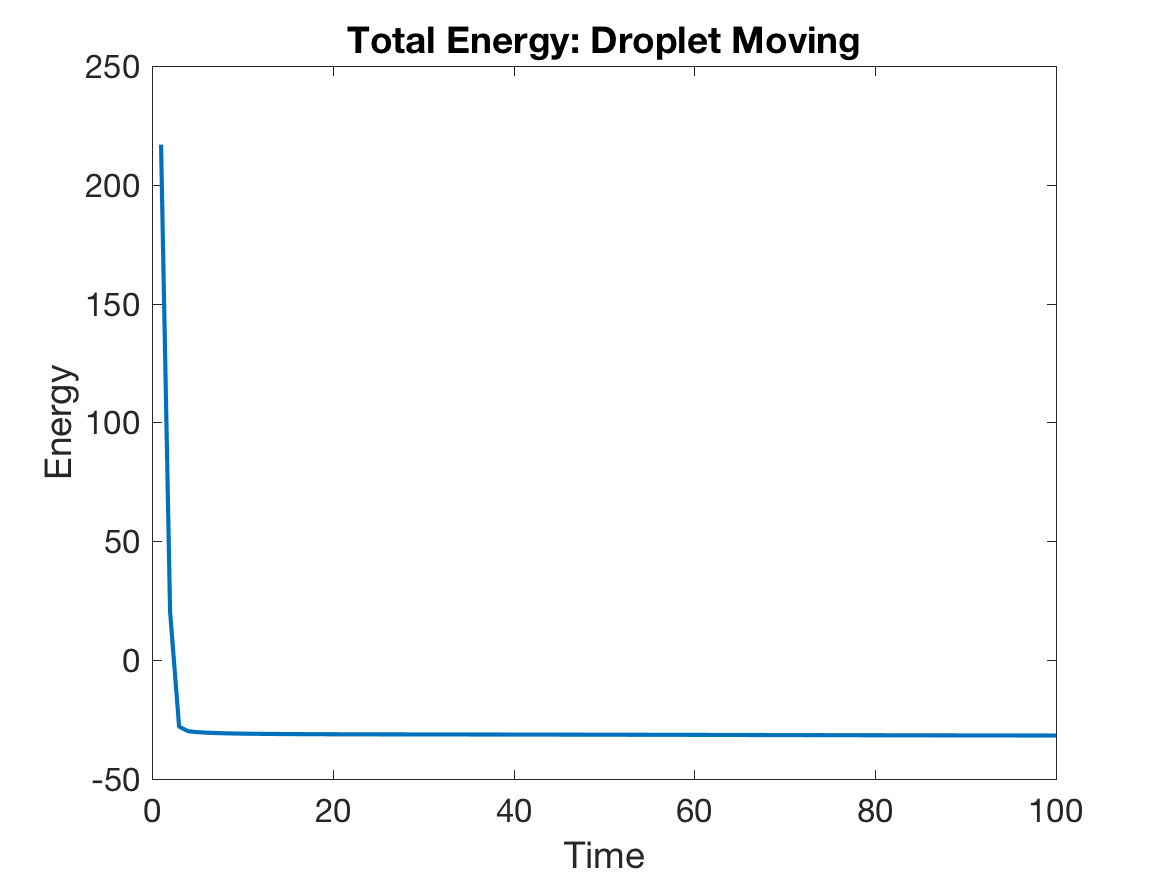}}
\caption{Total energy as a function of time for a moving droplet (Section \ref{sec:move-LC-droplet}).}
\label{fig:droplet-moving-energy}
\end{figure}

\newpage
\subsection{Cornering Effect of a Liquid Crystal Droplet}
\label{sec:cornering-LC-droplet}

The second numerical experiment demonstrates the ``cornering'' effect of a liquid crystal droplet. The initial conditions are as follows:
\begin{align*}
s_h^0 &= s^*,
\\
\bnh^0 &= (1,0),
\\
\phih^0 &= I_h\left\{-\tanh\left(\frac{(x-0.5)^2/0.02 + (y-0.5)^2/0.02 - 1}{2\varepsilon}\right)\right\}.
\end{align*}
The following Dirichlet boundary conditions on $\partial \Omega$ are imposed for $s$ and $\bn$:
\begin{align*}
s = s^*, \quad \bnh &= (1,0).
\end{align*}
The relevant parameters are $\kappa = 1, \rho = 1, \Werk =  1, \Wdw = 100, \Wchdw = 1, \Wchgd = 1 + \Wwan + \Wwas = 41, \Wwas = 20, \Wwan = 20$. The space step size is taken to be $h = \sqrt{2}/64$ and the time step size is taken to be $\tau = 0.002$ with a final stopping time of $T=2.0$. The interfacial width parameter is taken to be $\varepsilon = 3h/\sqrt{2}$. Figure \ref{fig:droplet-cornering} shows the evolution of the droplet over time. The top two rows display the evolution of the scalar degree of orientation parameter $s$. The bottom two rows show the evolution of the phase field parameter $\phi$ and the director field $\bn$.  The droplet takes on a ``lens'' shape with corners at the top and bottom.  Note that the cornering is not sharp due to having finite surface tension, as well as a finite interfacial width parameter $\varepsilon$.

\begin{figure}
\subfloat{\includegraphics[width = 2in]{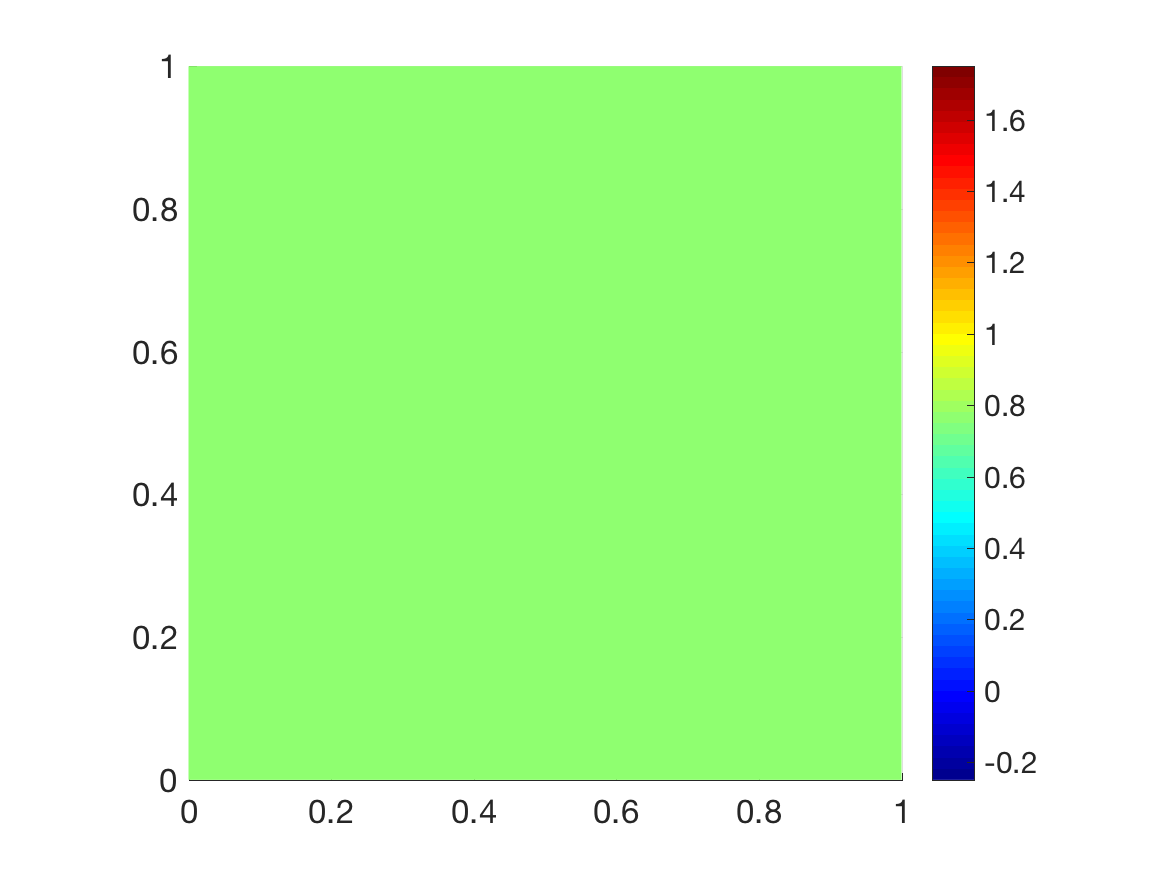}} \hspace{-2em}
\subfloat{\includegraphics[width = 2in]{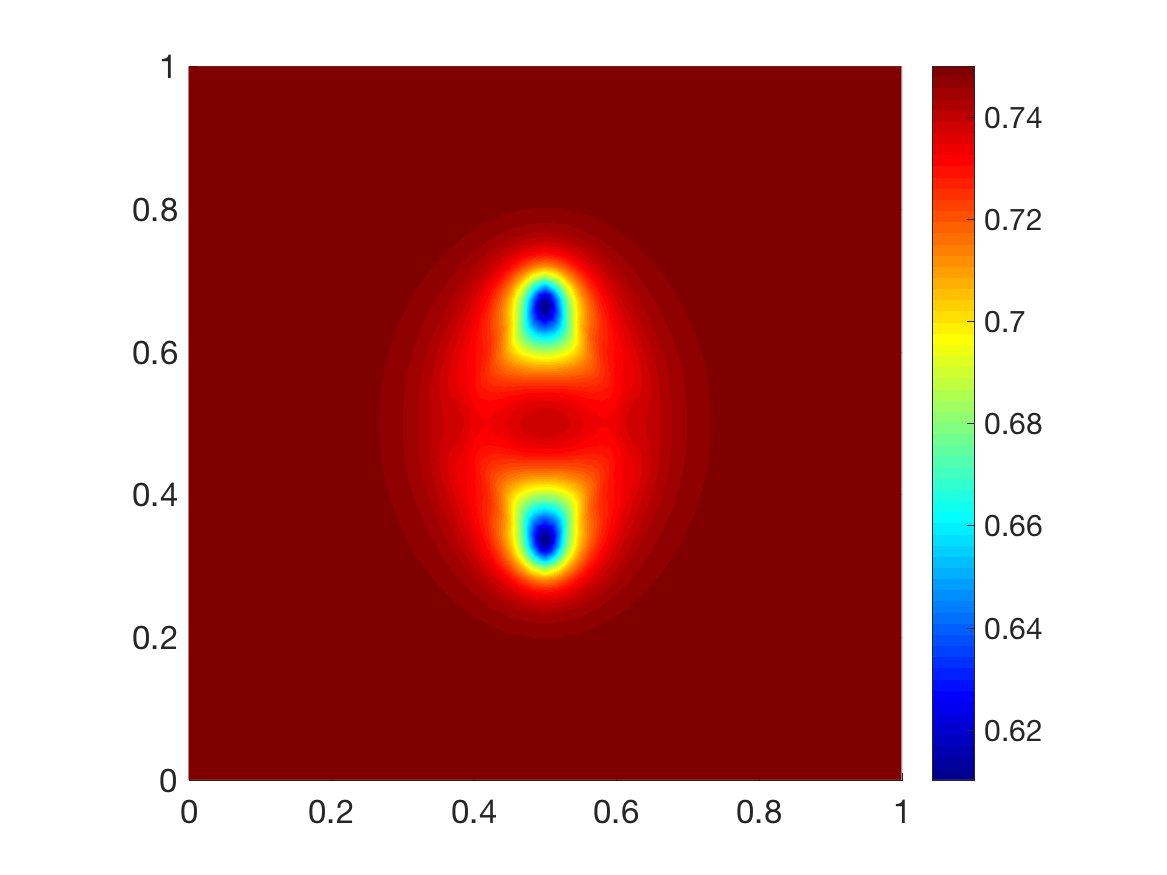}} \hspace{-2em}
\subfloat{\includegraphics[width = 2in]{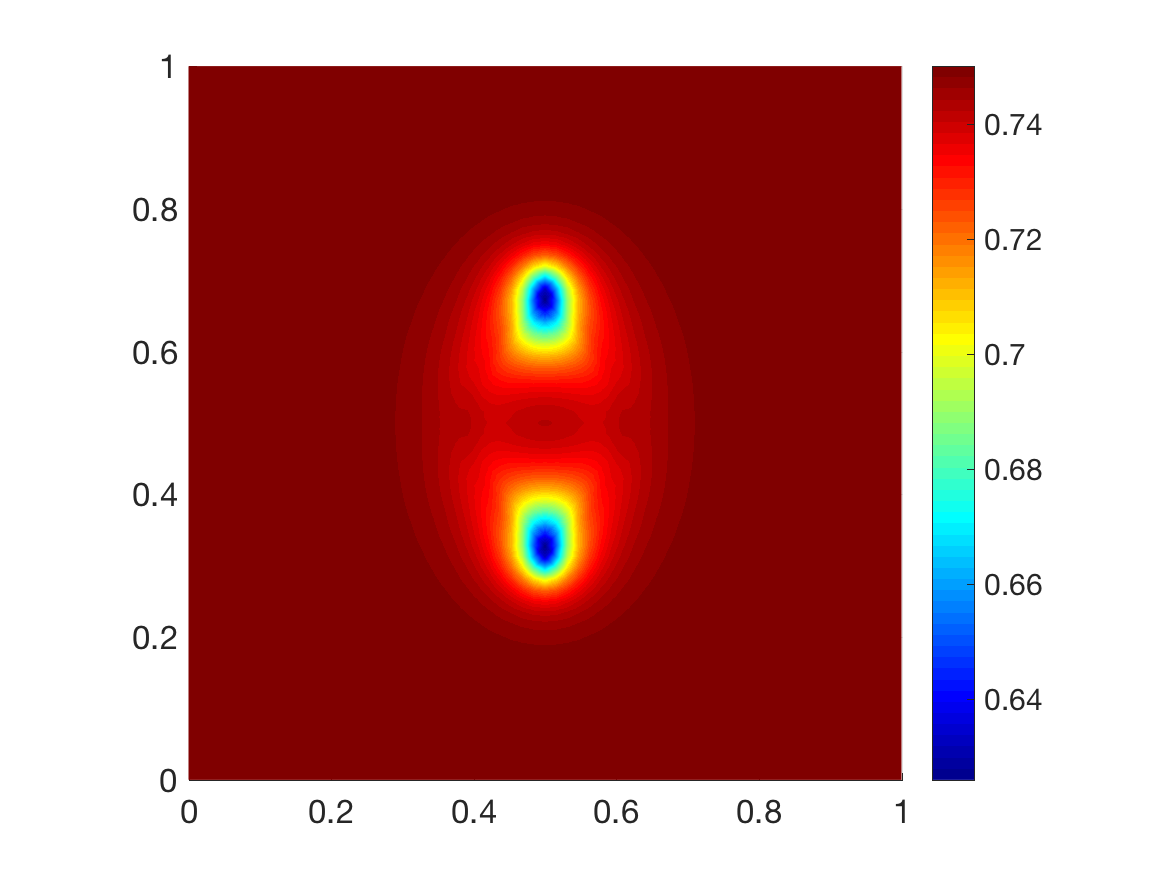}} \\[-4ex]
\subfloat{\includegraphics[width = 2in]{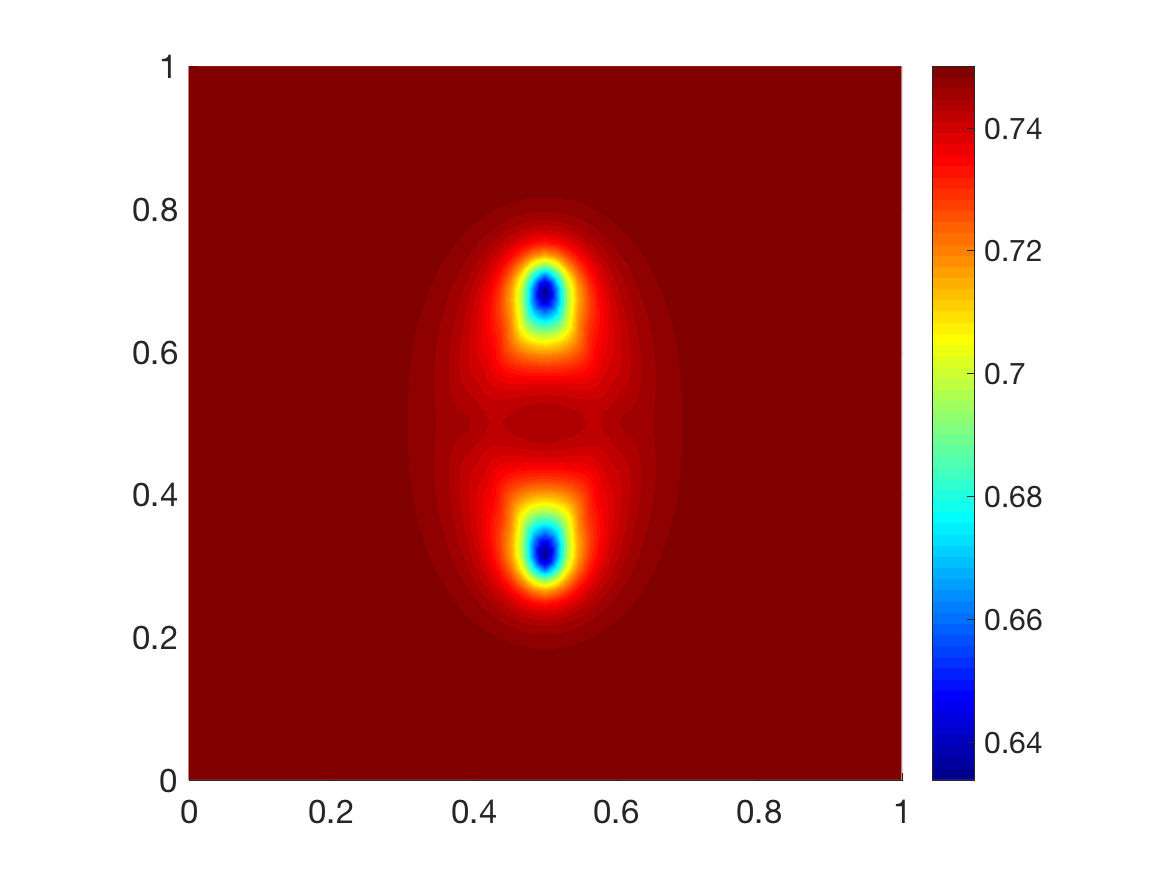}} \hspace{-2em}
\subfloat{\includegraphics[width = 2in]{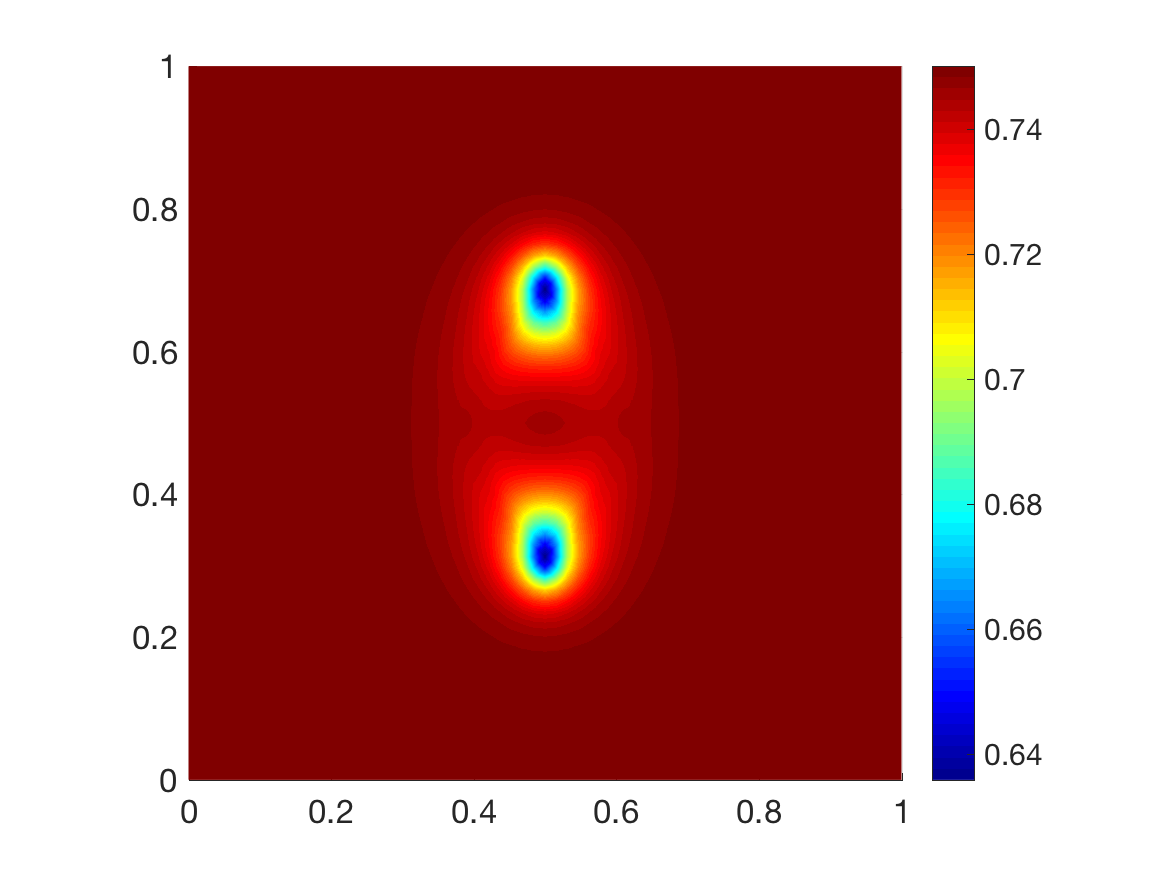}}\hspace{-2em}
\subfloat{\includegraphics[width = 2in]{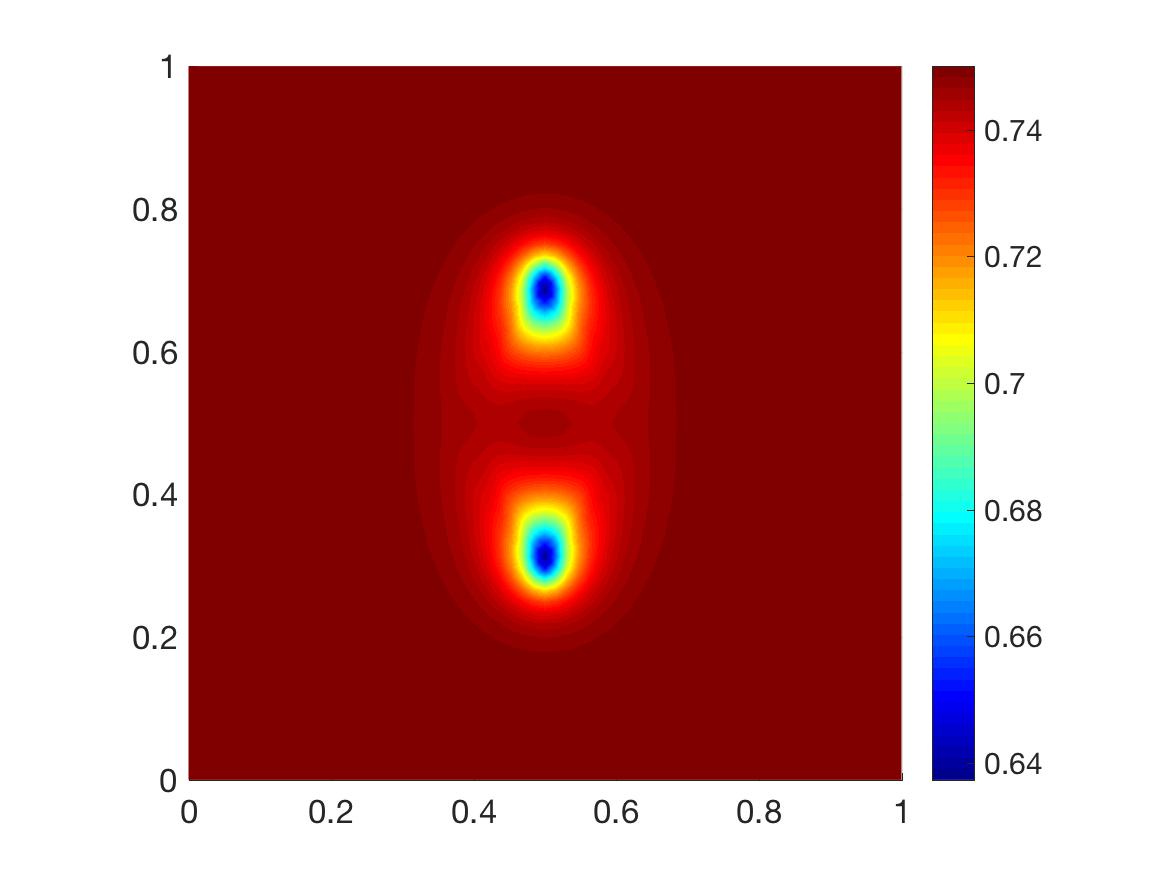}}\\[-4ex]
\subfloat{\includegraphics[width = 2in]{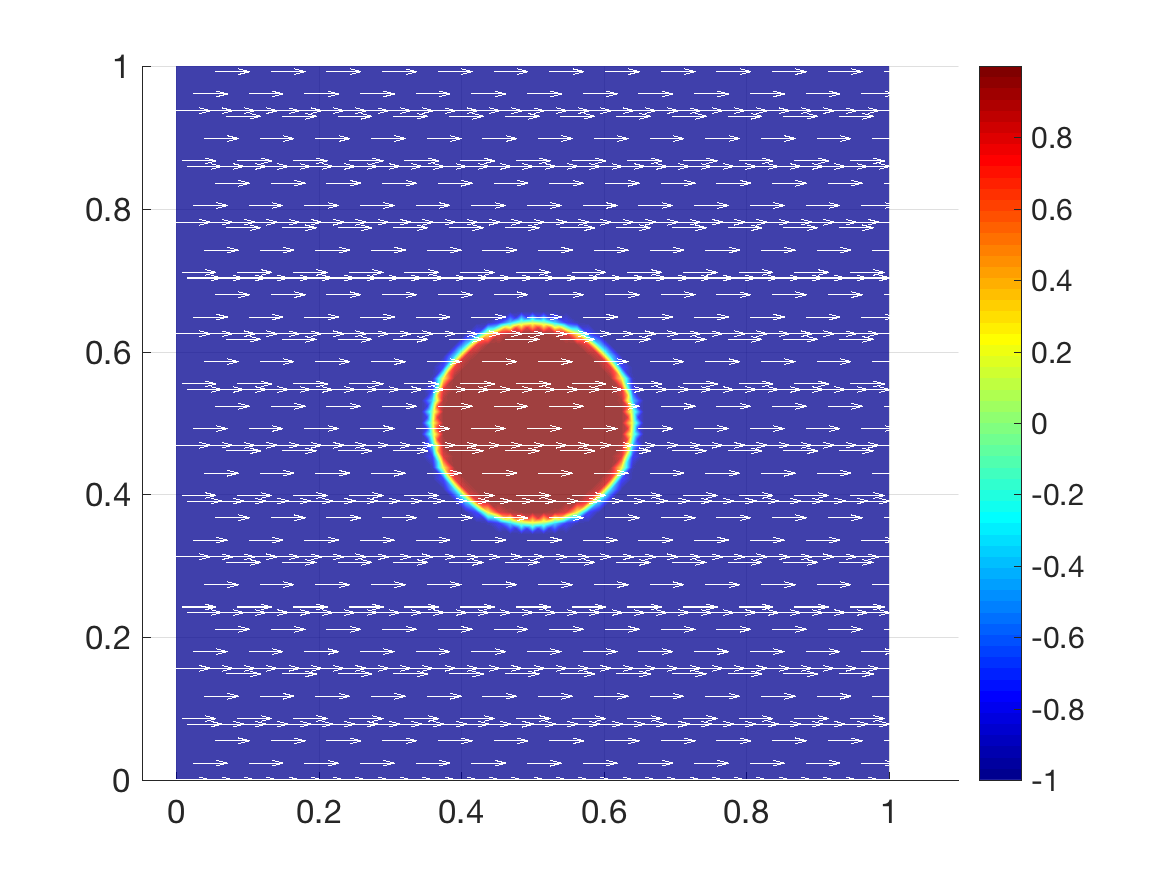}} \hspace{-2em}
\subfloat{\includegraphics[width = 2in]{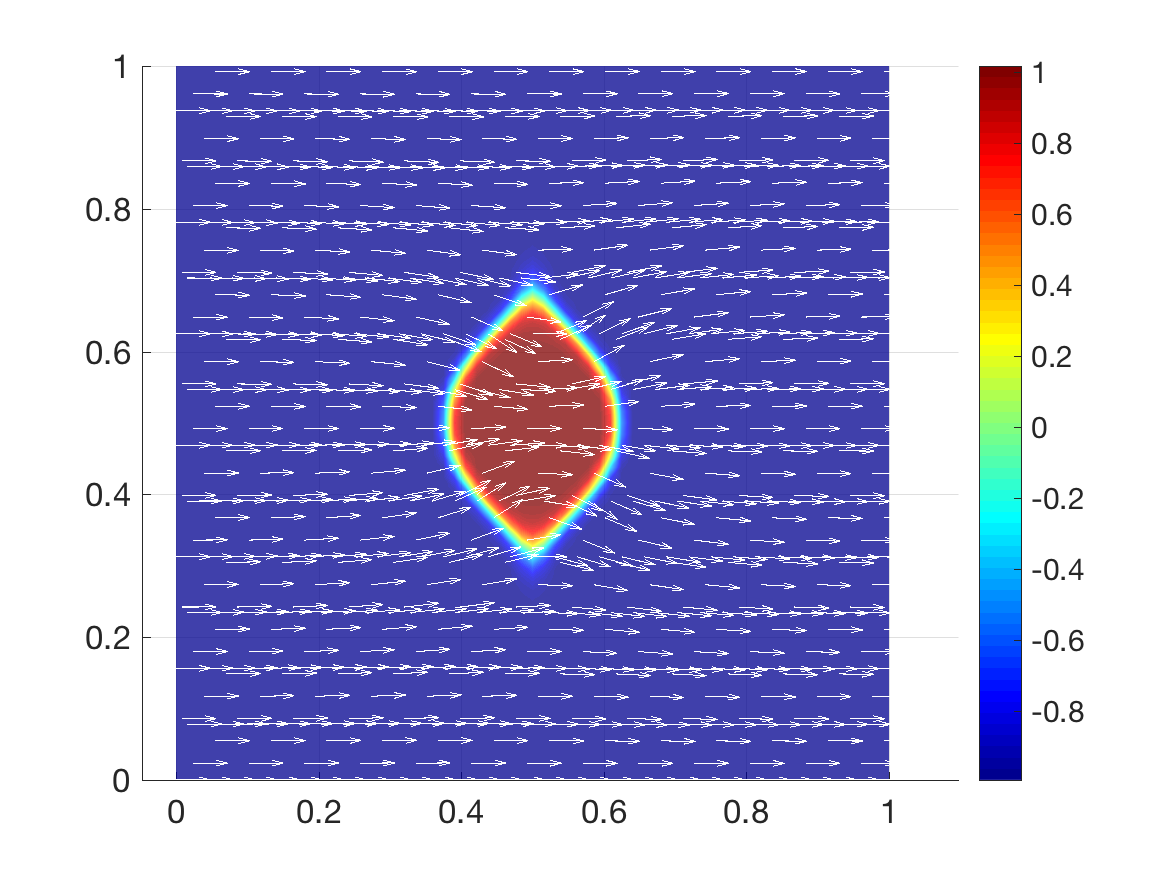}} \hspace{-2em}
\subfloat{\includegraphics[width = 2in]{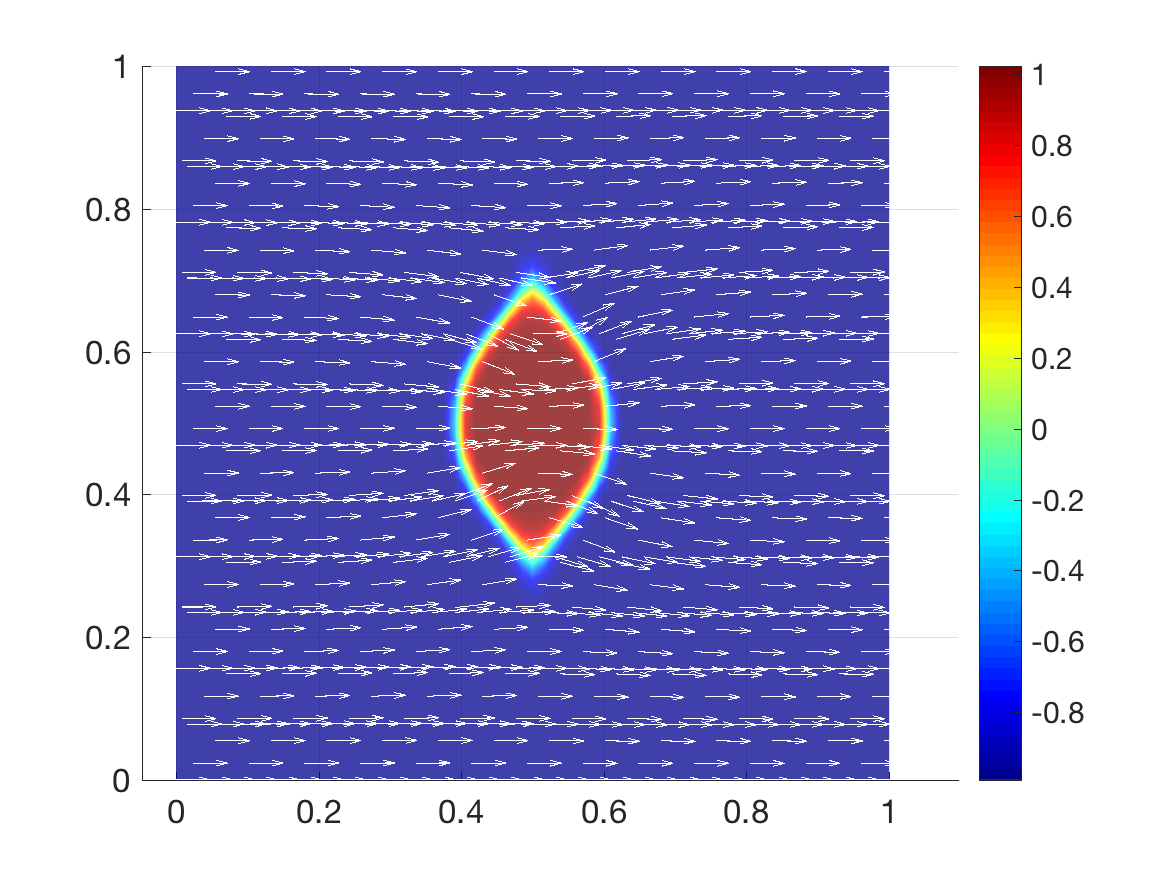}} \\[-4ex]
\subfloat{\includegraphics[width = 2in]{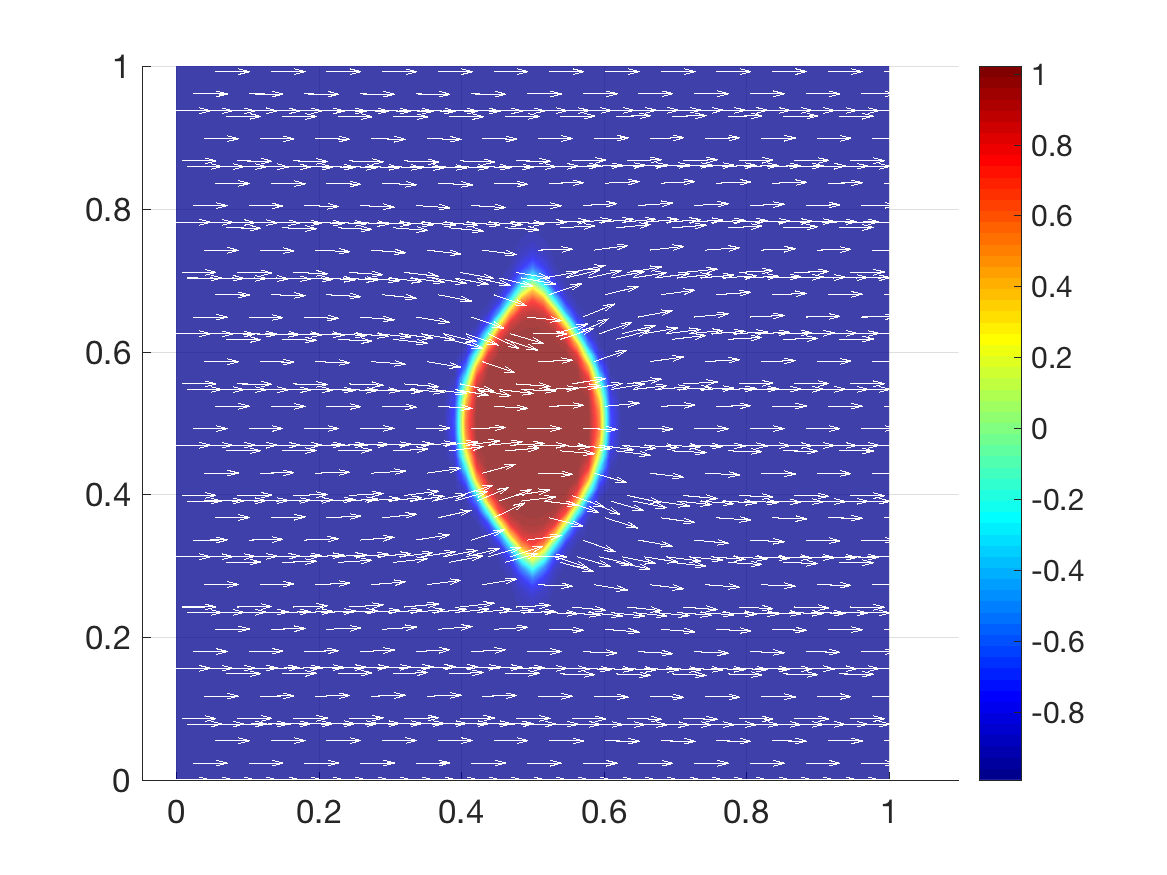}} \hspace{-2em}
\subfloat{\includegraphics[width = 2in]{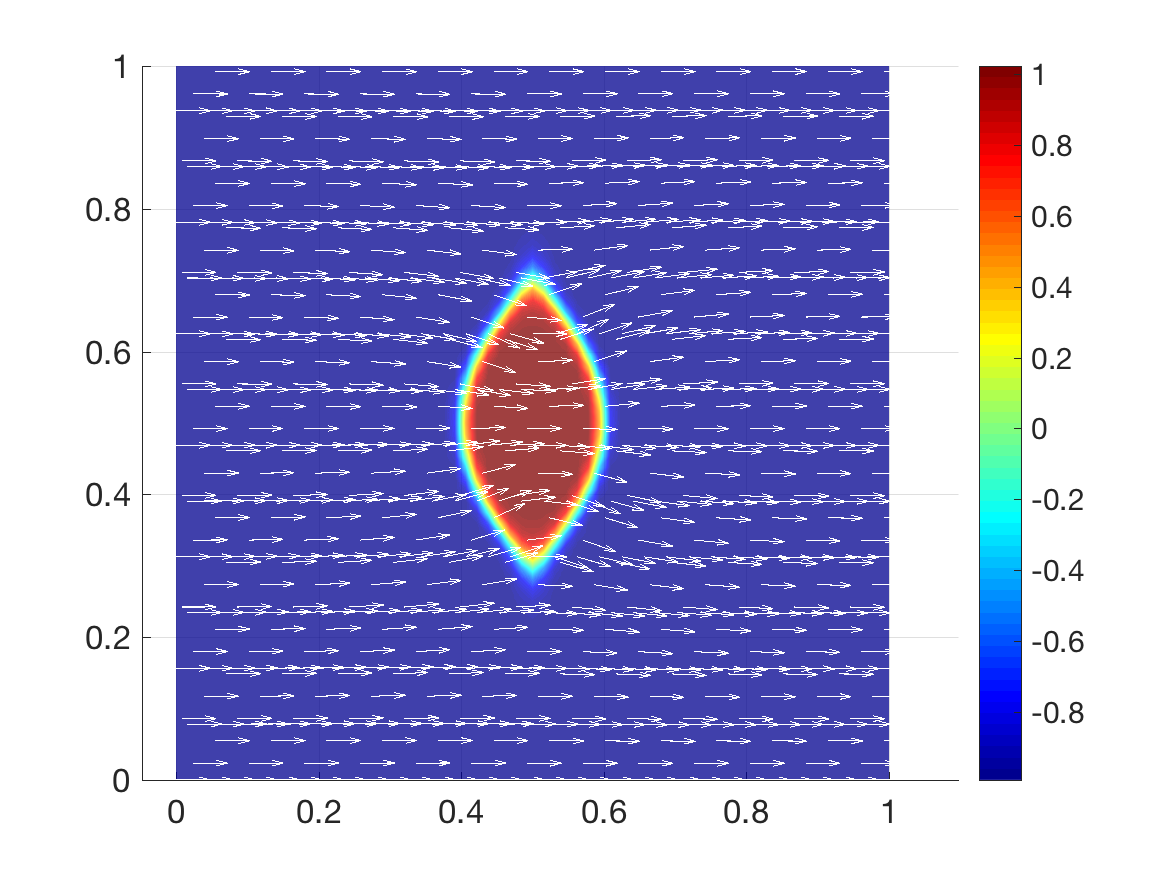}}\hspace{-2em}
\subfloat{\includegraphics[width = 2in]{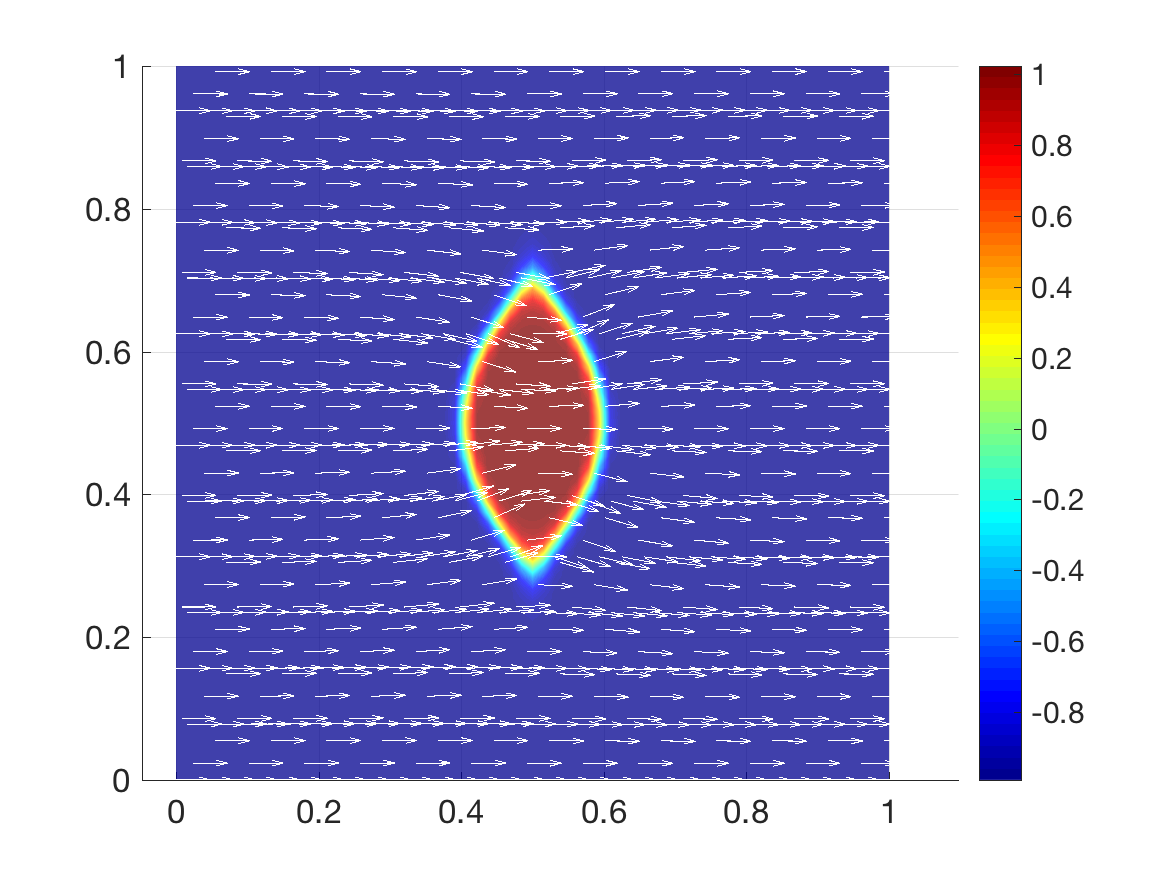}}
\caption{Droplet cornering, $\Omega = [0,1]\times[0,1]$, $h = \sqrt{2} / 64$, $\tau = 0.002$ (Section \ref{sec:cornering-LC-droplet}). The times displayed are $t=0, t=0.04, t=0.08$ (top from left to right) and $t=0.12, t=0.16, t=0.2$ (bottom from left to right).}
\label{fig:droplet-cornering}
\end{figure}

Figure \ref{fig:droplet-cornering-energy} displays the energy decreasing property of the scheme for this experiment.

\begin{figure}
\subfloat{\includegraphics[width = 3in]{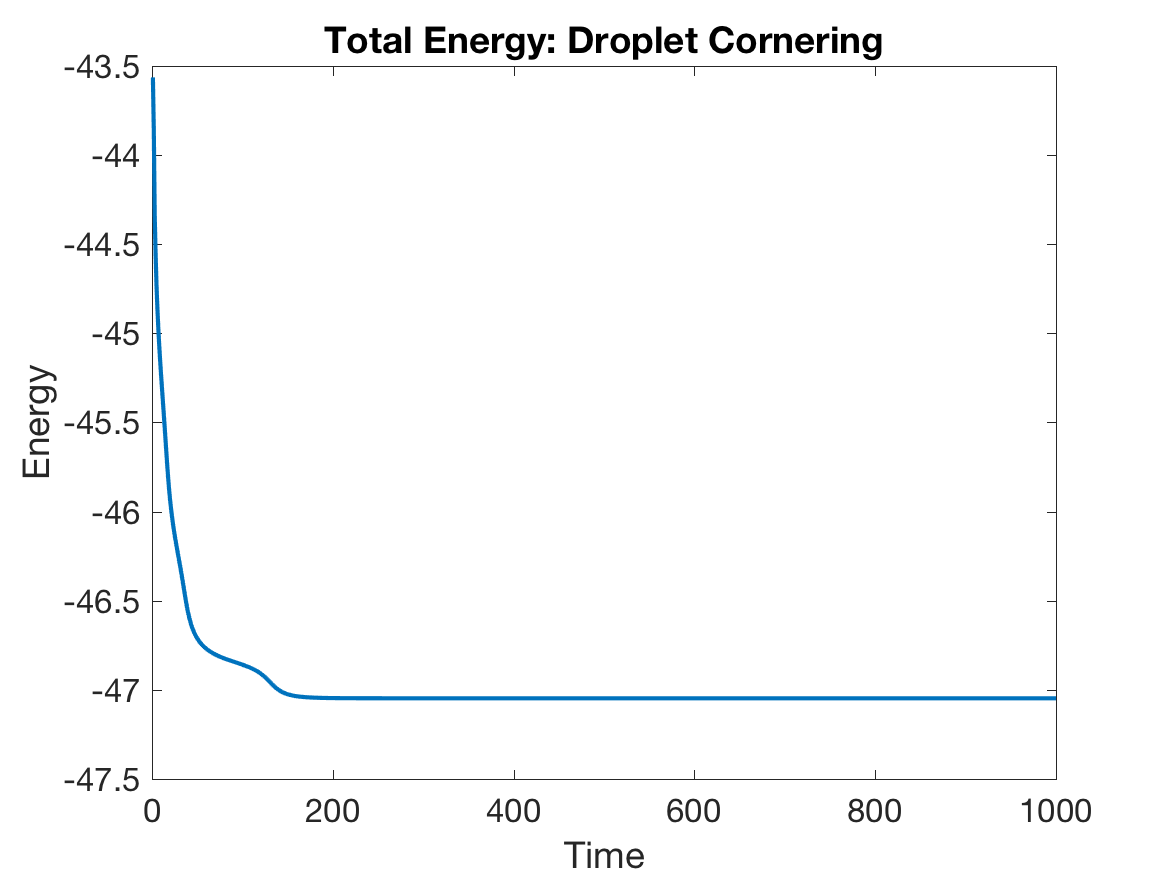}}
\subfloat{\includegraphics[width = 3in]{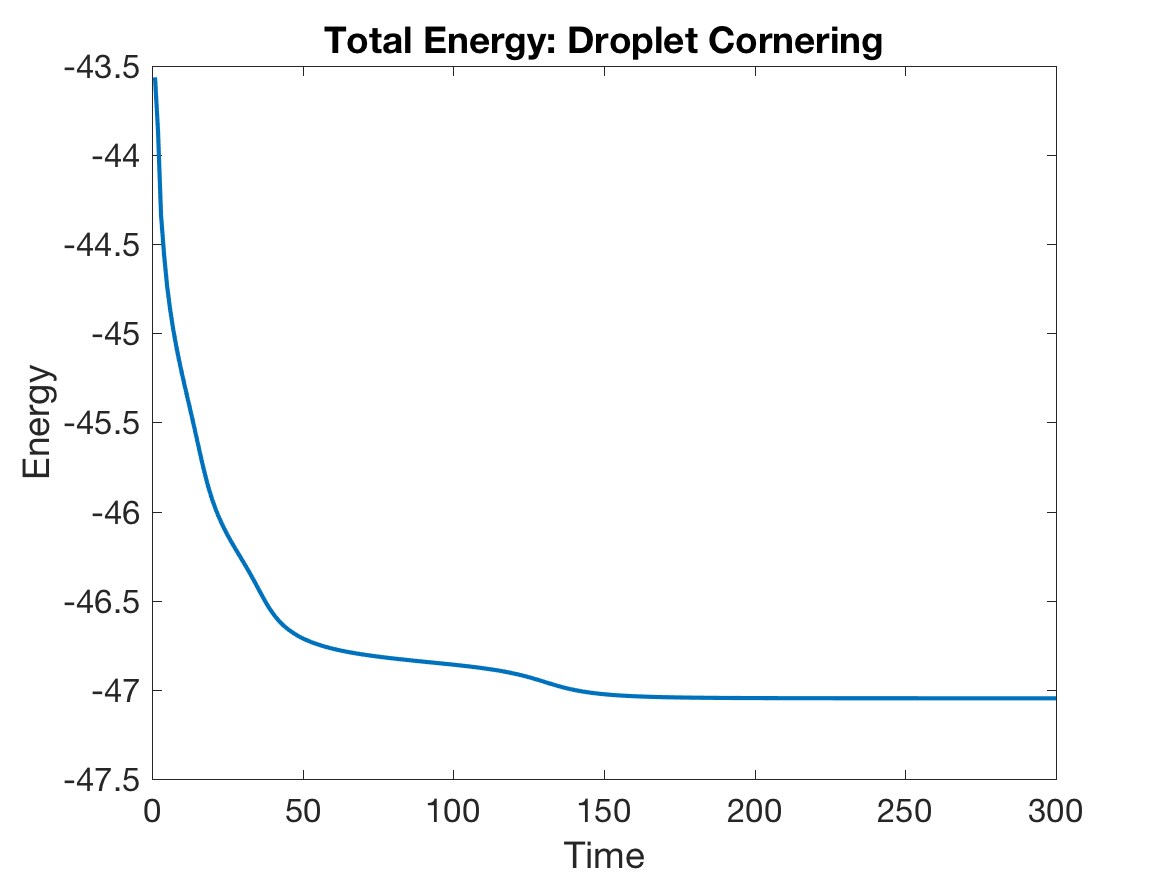}}
\caption{Total energy as a function of time for a droplet undergoing the cornering effect (Section \ref{sec:cornering-LC-droplet}).}
\label{fig:droplet-cornering-energy}
\end{figure}

\newpage

\subsection{Two Liquid Crystal Droplets Colliding}
\label{sec:collide-LC-droplet}

The third numerical experiment demonstrates two liquid crystal droplets colliding. The initial conditions are as follows:
\begin{align*}
s_h^0 &= s^*,
\\
\bnh^0 &=
\begin{cases}
 \frac{(x,y) - (0.3,0.5)}{|(x,y) - (0.3,0.5)|}, \quad x \le 0.5,
\\
\\
 \frac{-\left((x,y) - (0.7,0.5)\right)}{|(x,y) - (0.7,0.5)|}, \quad x > 0.5,
\end{cases}
\\
\phih^0 &=
\begin{cases}
I_h\left\{-\tanh\left(\frac{(x-0.3)^2/0.02 + (y-0.5)^2/0.02 - 1}{2\varepsilon}\right)\right\}, \quad x \le 0.5,
\\
I_h\left\{-\tanh\left(\frac{(x-0.7)^2/0.02 + (y-0.5)^2/0.02 - 1}{2\varepsilon}\right)\right\}, \quad x > 0.5,.
\end{cases}
\end{align*}
The following Dirichlet boundary conditions on $\partial \Omega$ are imposed for $s$ and $\bn$:
\begin{align*}
s = s^*, \quad \bnh &= (1,0).
\end{align*}
The relevant parameters are $\kappa = 1, \rho = 1, \Werk =  1, \Wdw = 100, \Wchdw = 1, \Wchgd = 1 + \Wwan + \Wwas = 21, \Wwas = 10, \Wwan = 10$. The space step size is taken to be $h = \sqrt{2}/64$ and the time step size is taken to be $\tau = 0.002$ with a final stopping time of $T=2.0$. The interfacial width parameter is taken to be $\varepsilon = 3h/\sqrt{2}$. Figure \ref{fig:droplet-collide} shows the evolution of the droplet over time. The top two rows display the evolution of the scalar degree of orientation parameter $s$. The bottom two rows show the evolution of the phase field parameter $\phi$ and the director field $\bn$.  Due to the boundary conditions for $\bn$, the defects inside the droplets are driven to annihilate; this is what forces the droplets to collide.  At equilibrium, no defects remain and the droplet takes on a lens shape.

\begin{figure}
\subfloat{\includegraphics[width = 1.75in]{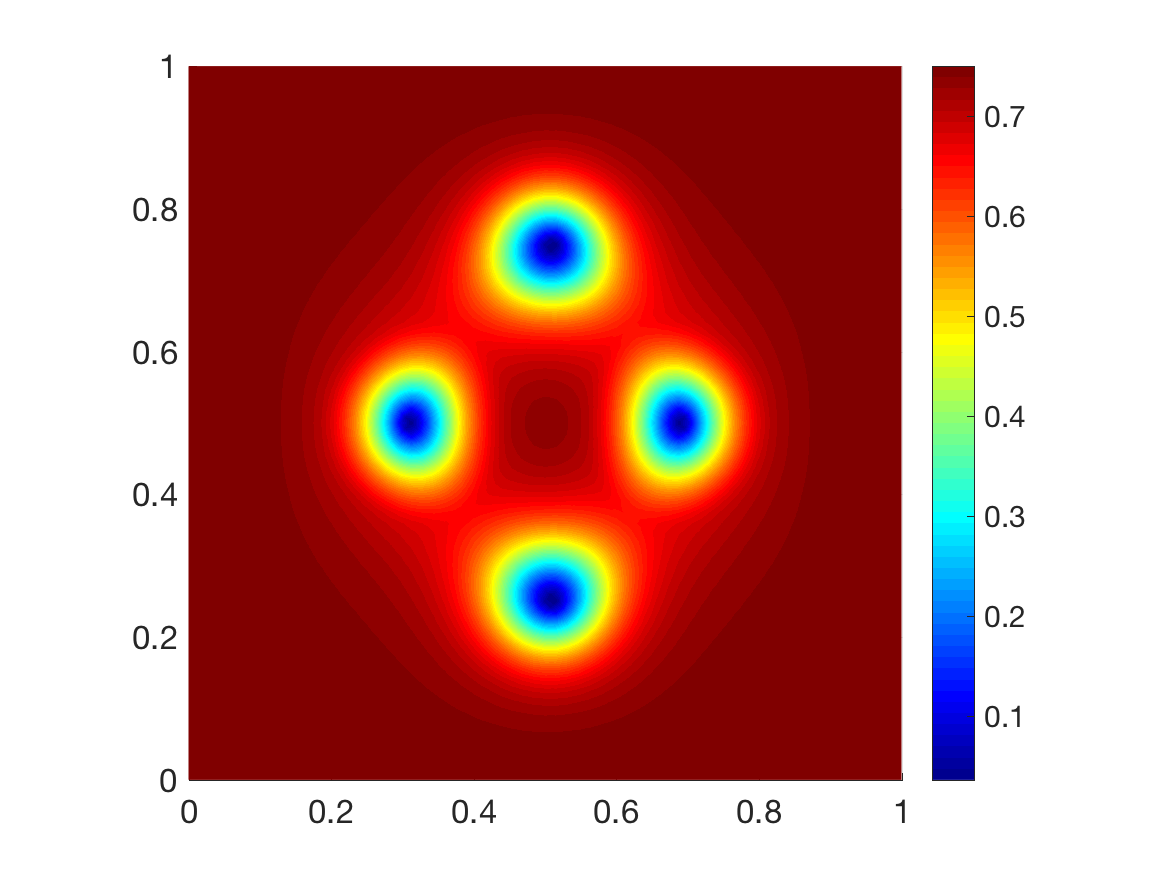}} \hspace{-2em}
\subfloat{\includegraphics[width = 1.75in]{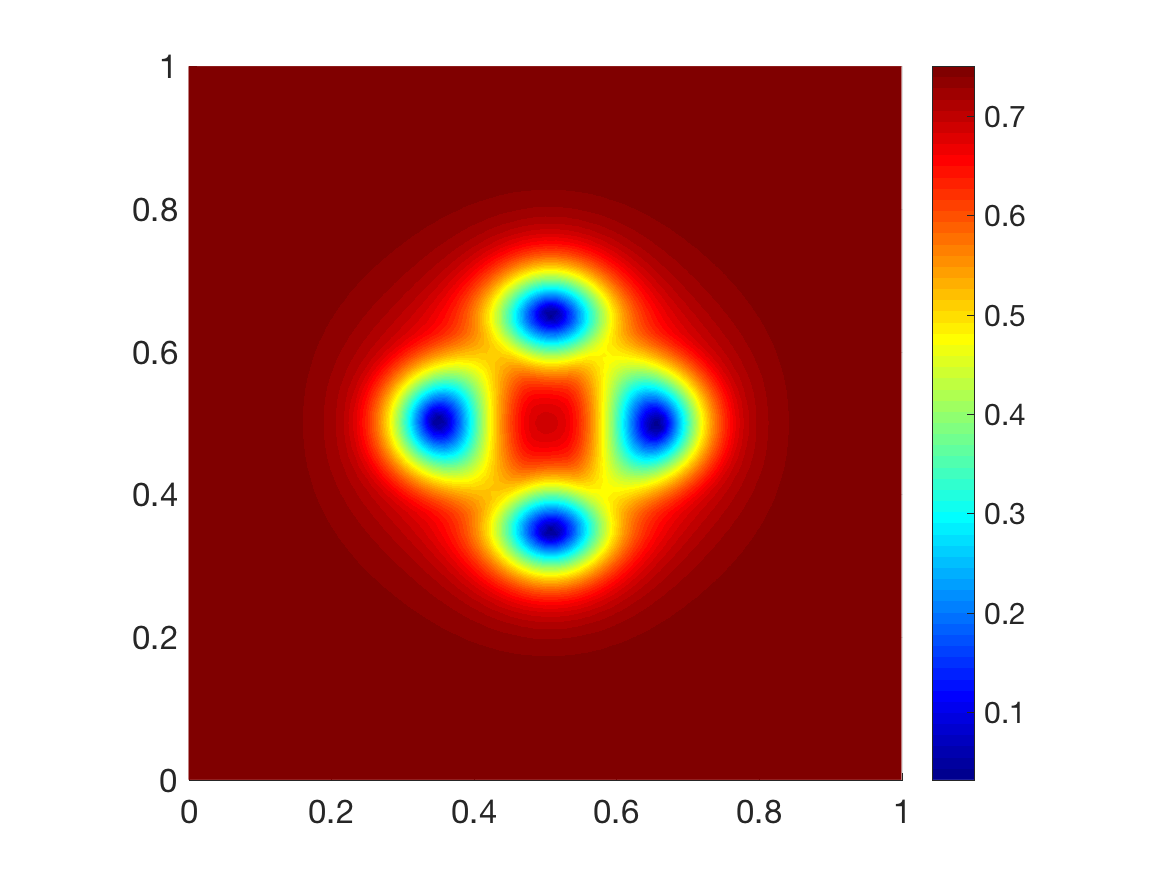}} \hspace{-2em}
\subfloat{\includegraphics[width = 1.75in]{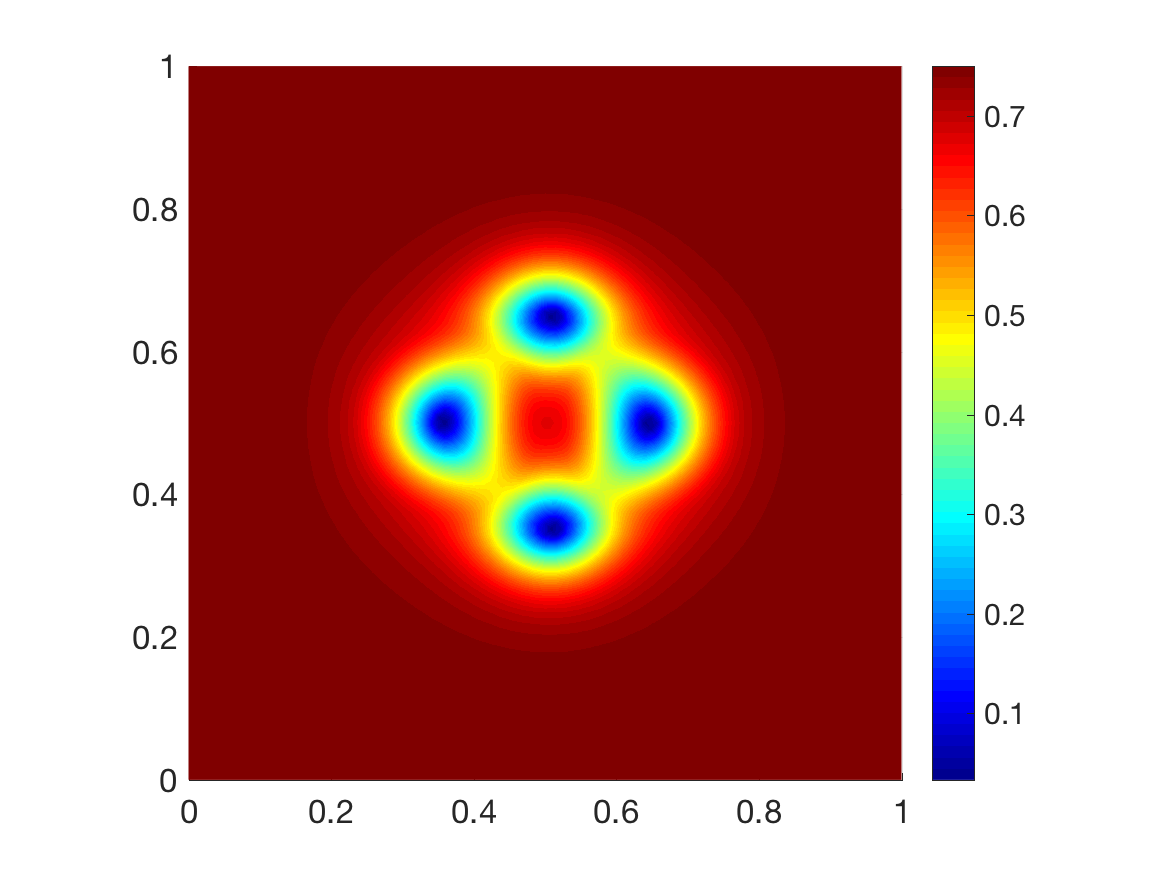}} \hspace{-2em}
\subfloat{\includegraphics[width = 1.75in]{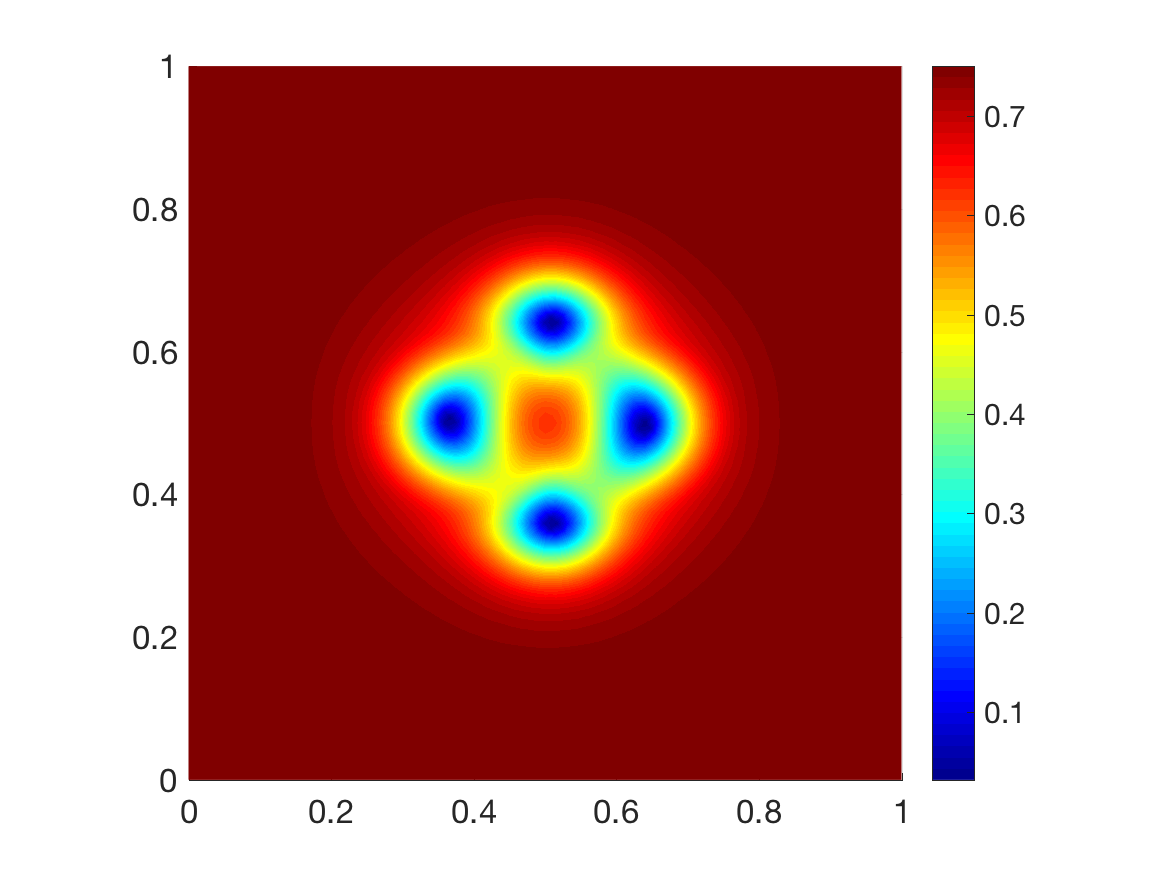}} \\[-4ex]
\subfloat{\includegraphics[width = 1.75in]{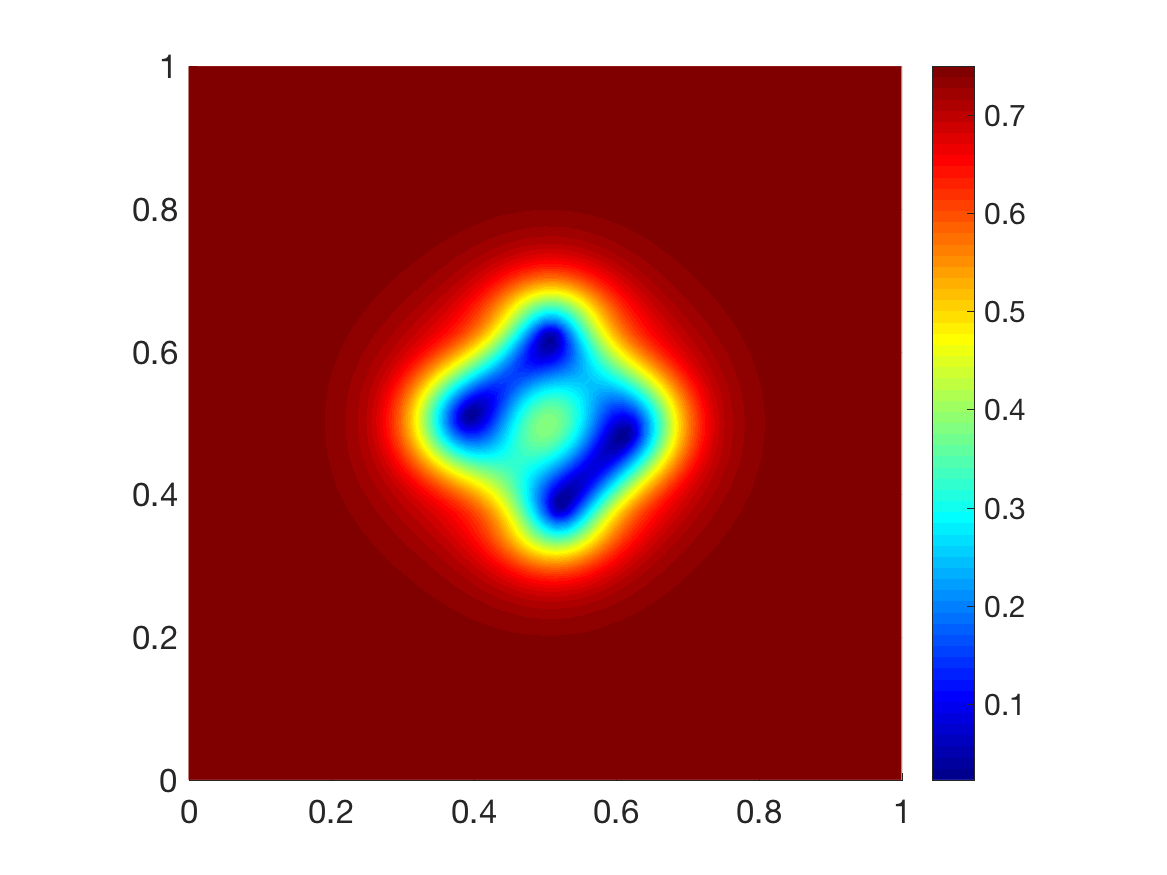}}\hspace{-2em}
\subfloat{\includegraphics[width = 1.75in]{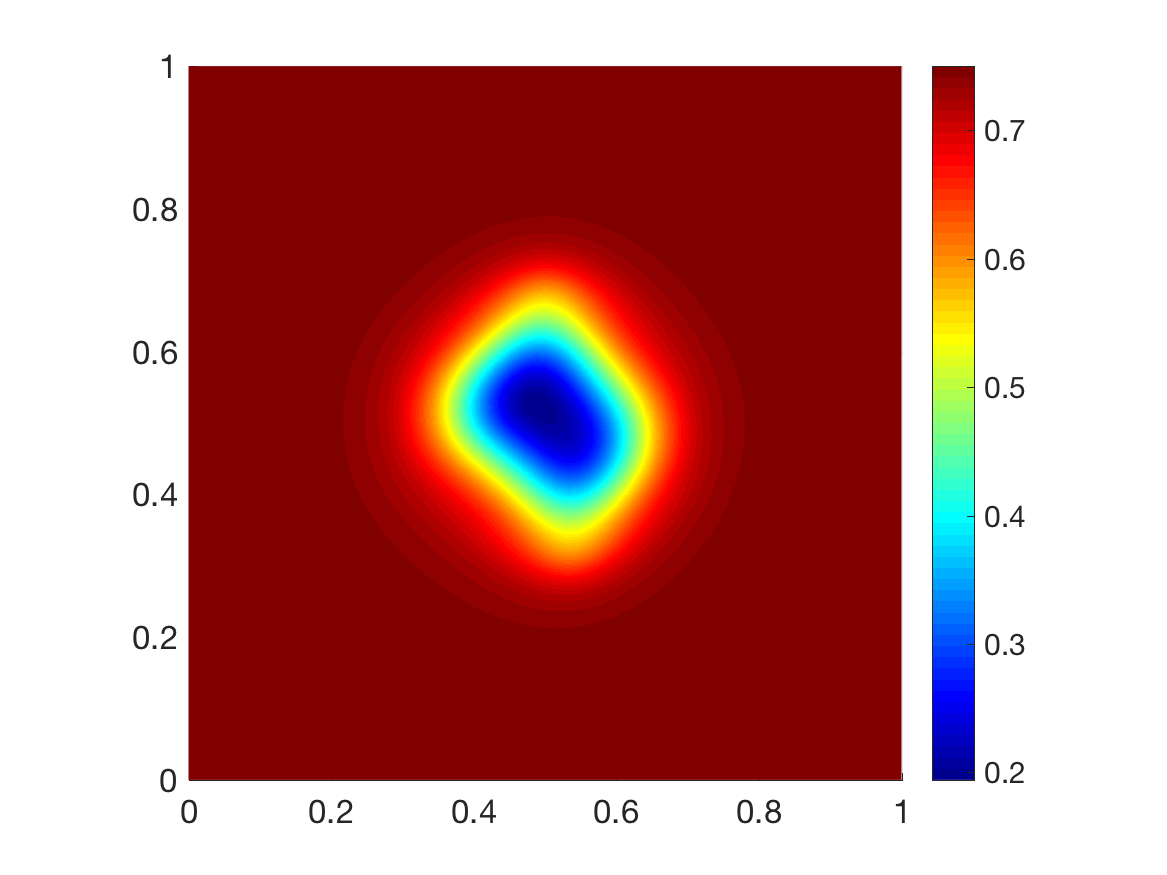}} \hspace{-2em}
\subfloat{\includegraphics[width = 1.75in]{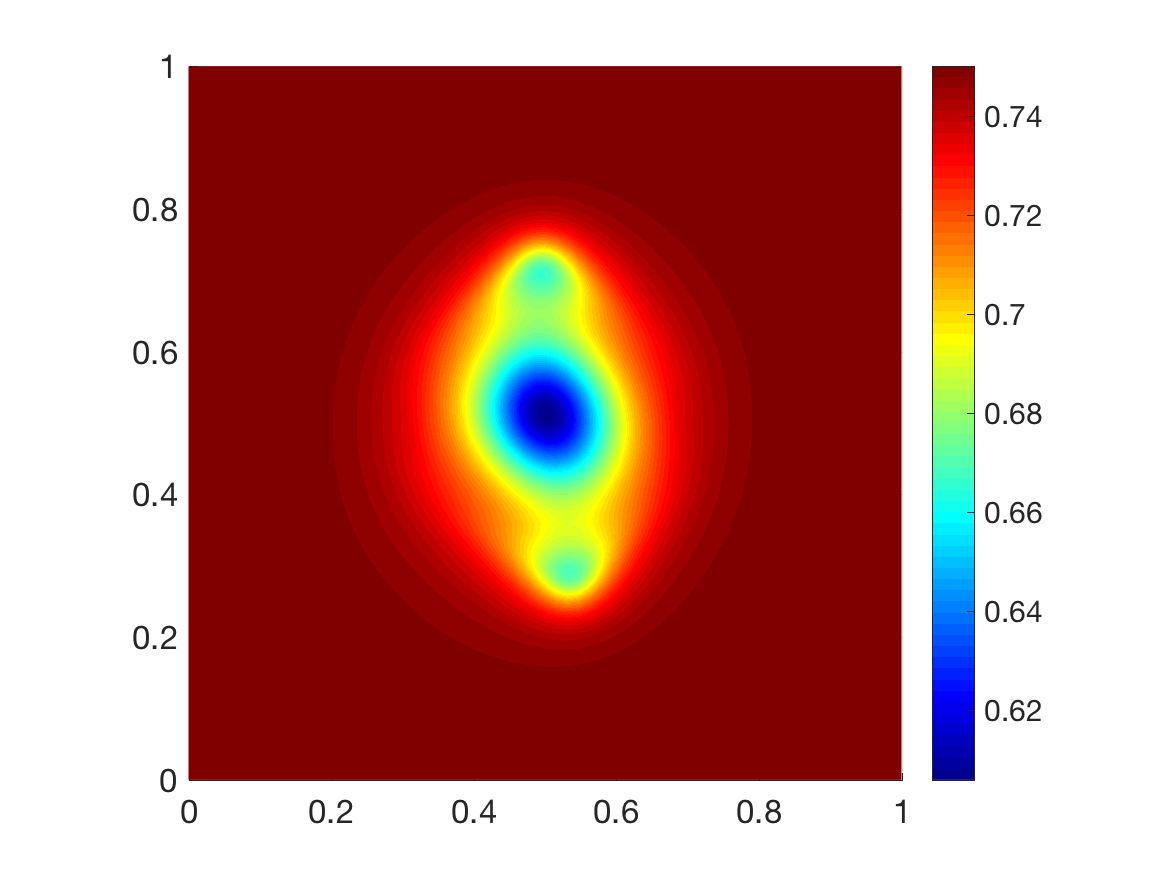}} \hspace{-2em}
\subfloat{\includegraphics[width = 1.75in]{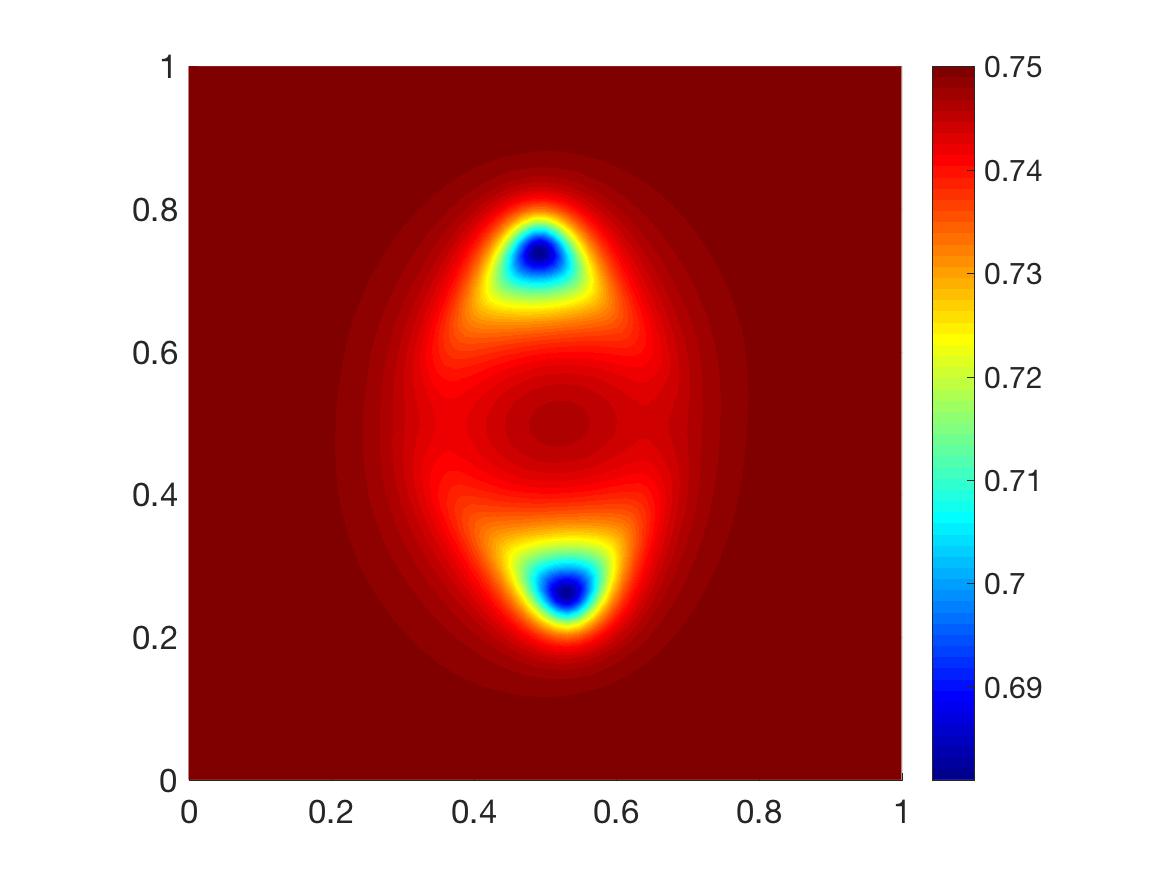}}\\[-4ex]
\subfloat{\includegraphics[width = 1.75in]{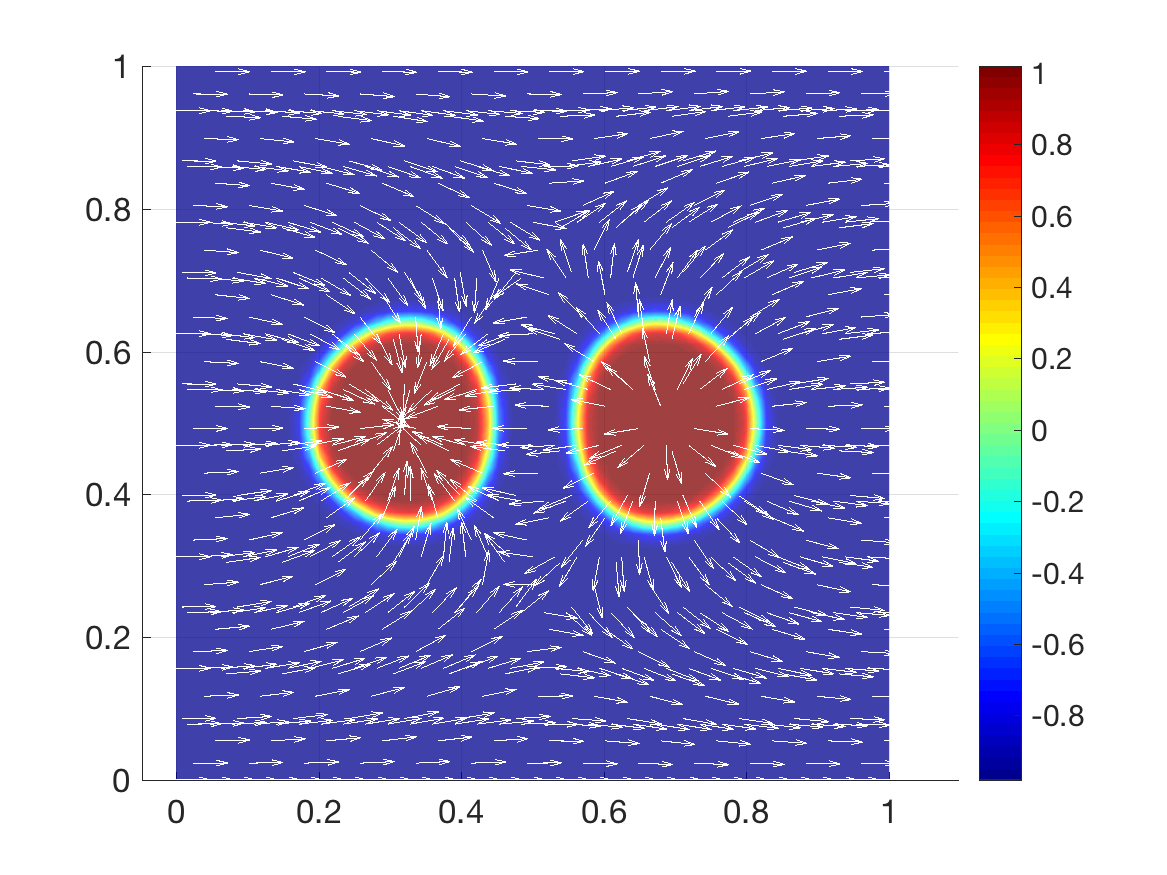}} \hspace{-2em}
\subfloat{\includegraphics[width = 1.75in]{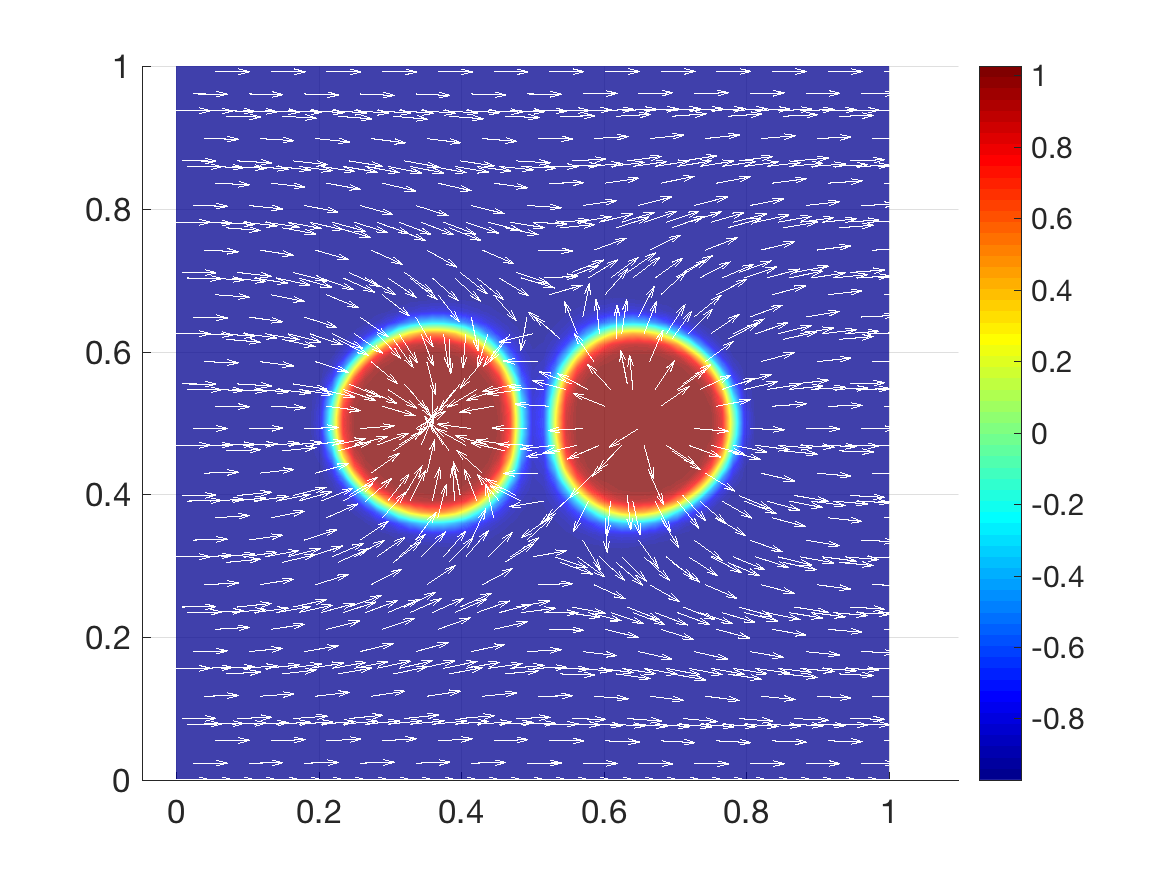}} \hspace{-2em}
\subfloat{\includegraphics[width = 1.75in]{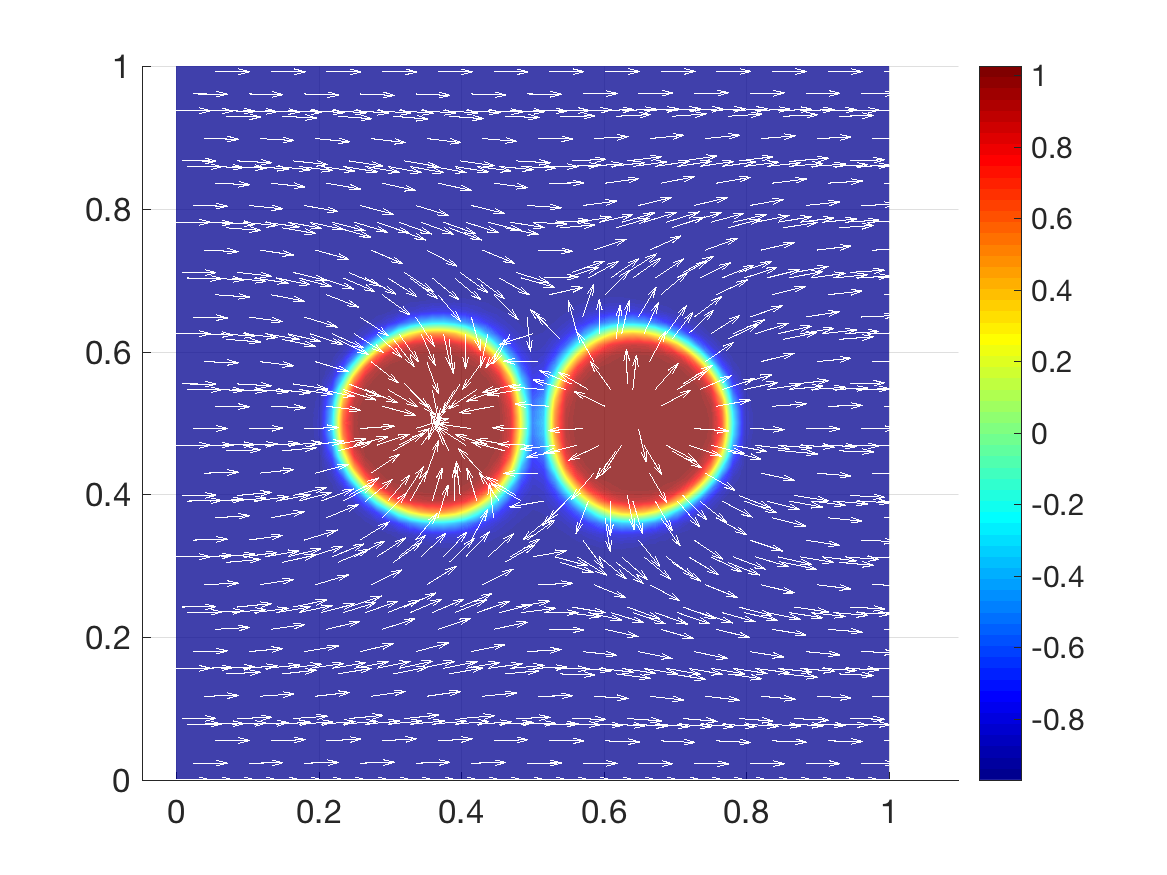}} \hspace{-2em}
\subfloat{\includegraphics[width = 1.75in]{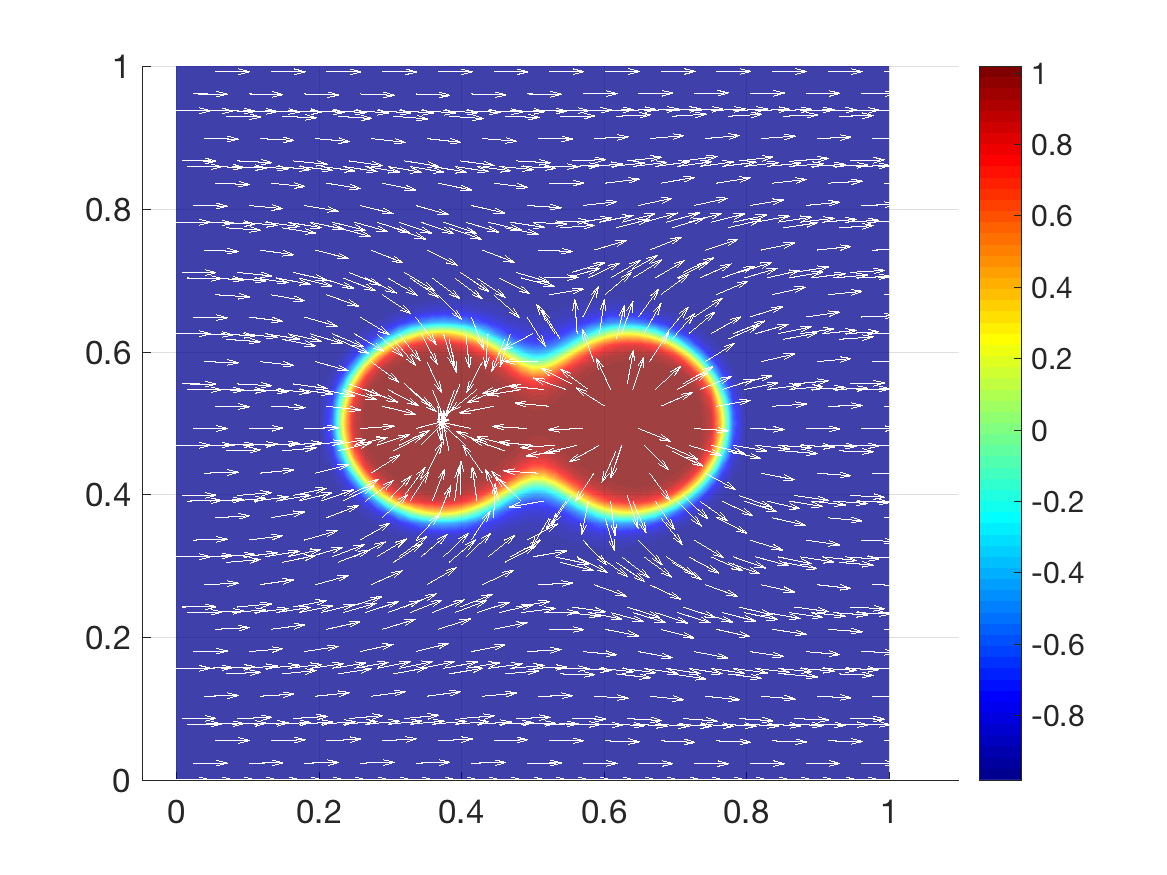}} \\[-4ex]
\subfloat{\includegraphics[width = 1.75in]{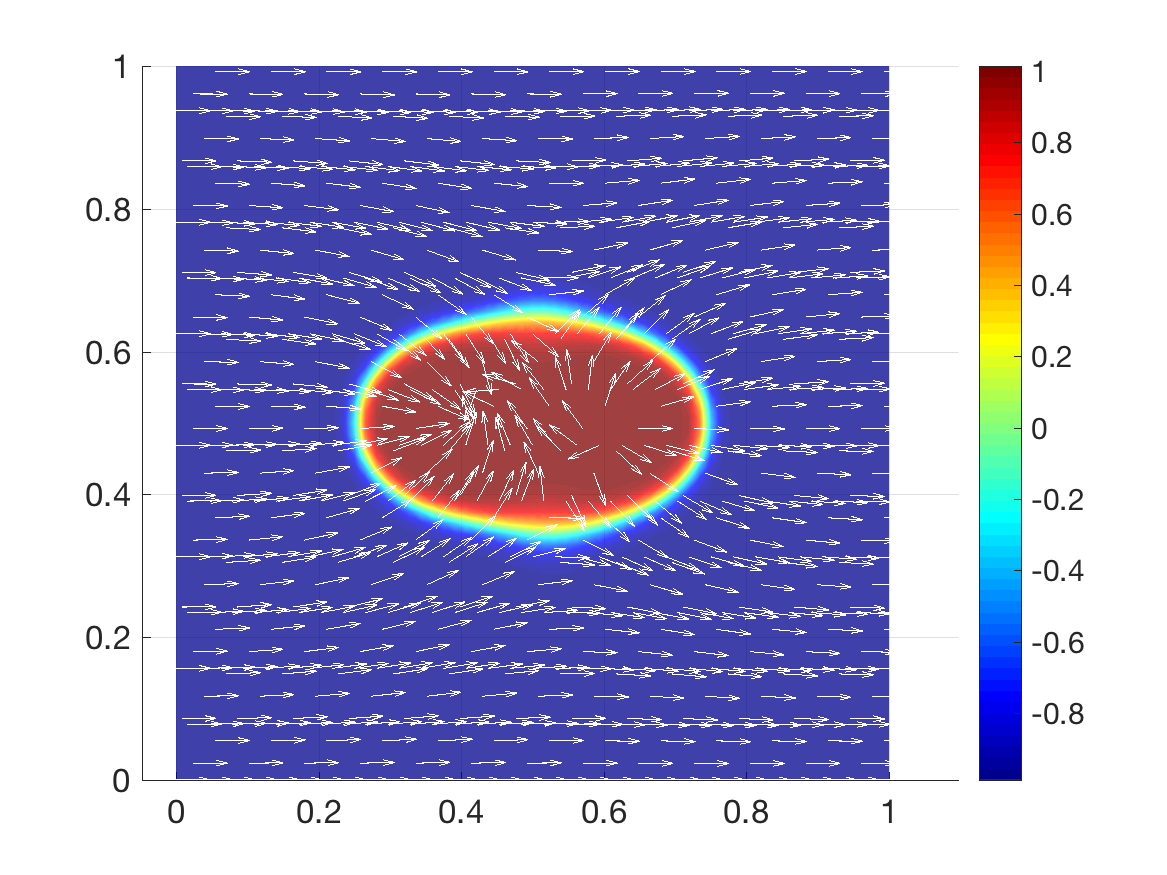}} \hspace{-2em}
\subfloat{\includegraphics[width = 1.75in]{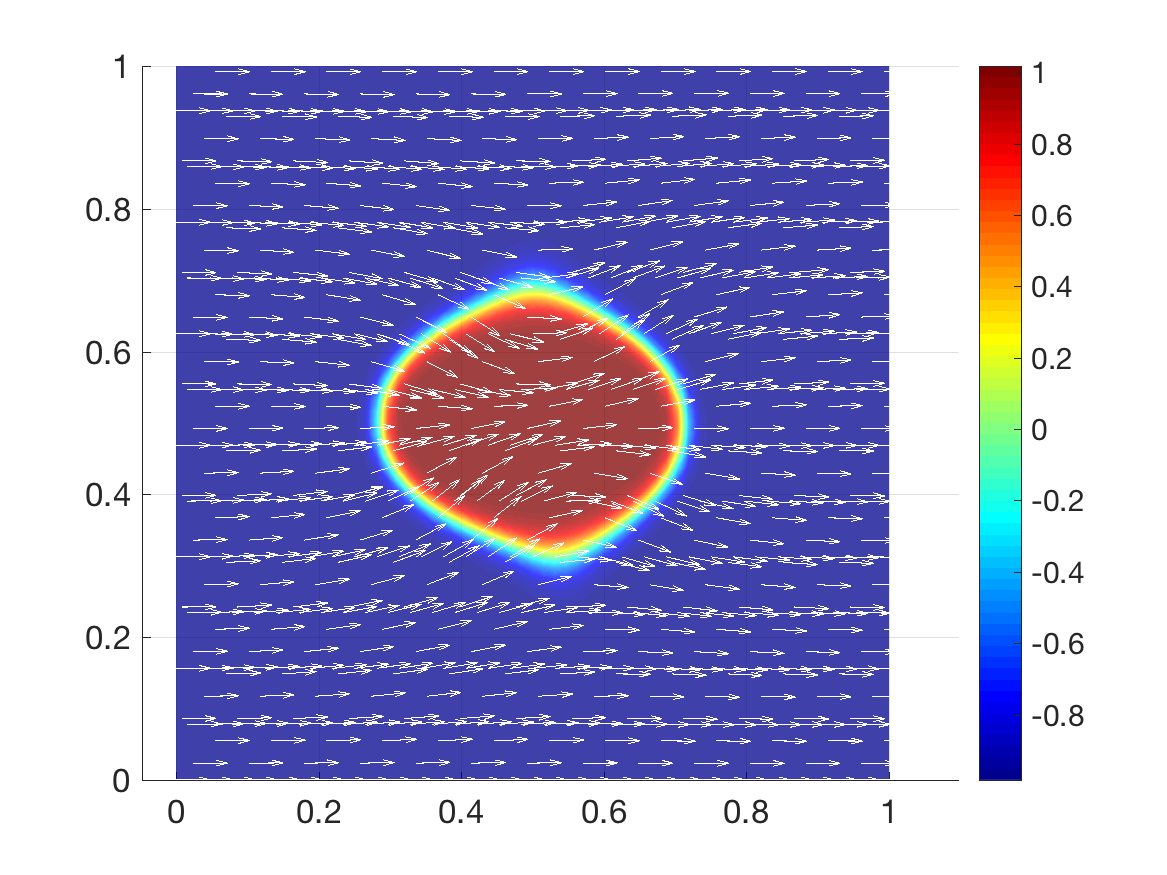}} \hspace{-2em}
\subfloat{\includegraphics[width = 1.75in]{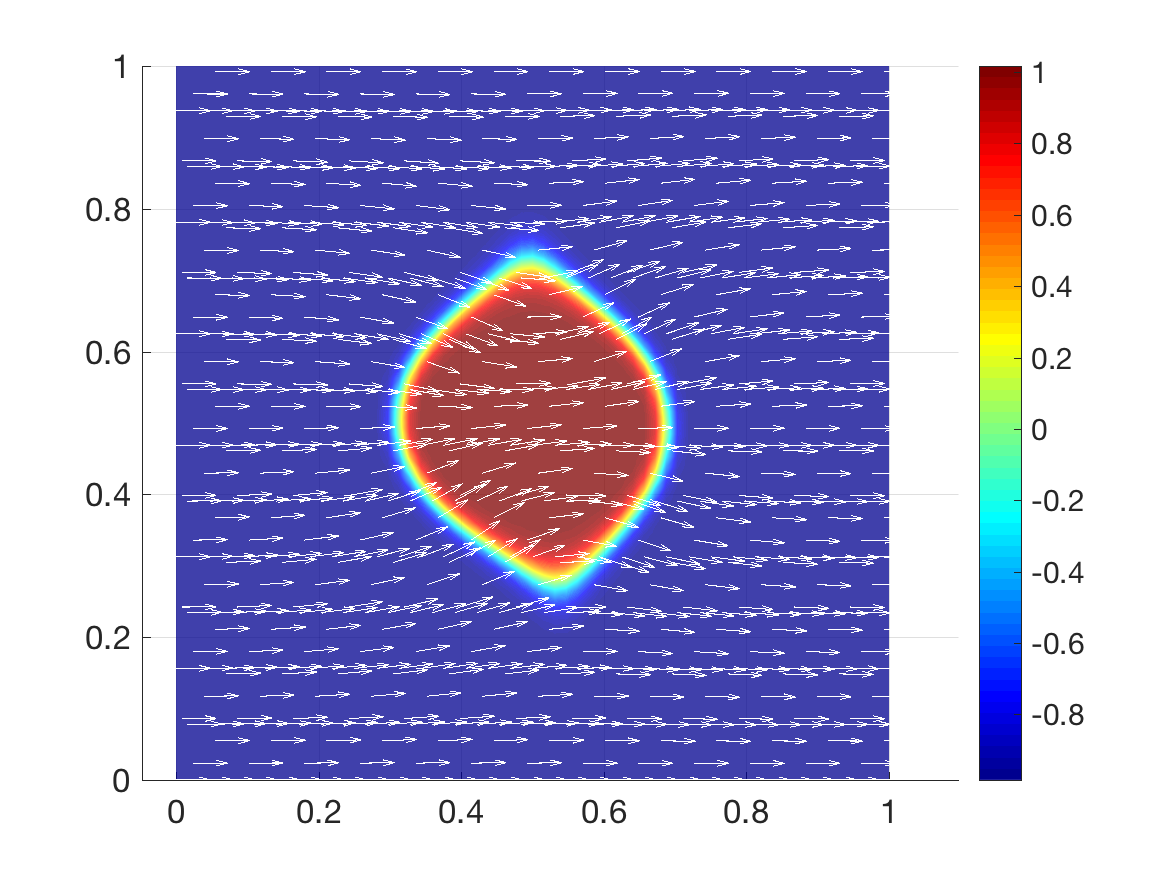}} \hspace{-2em}
\subfloat{\includegraphics[width = 1.75in]{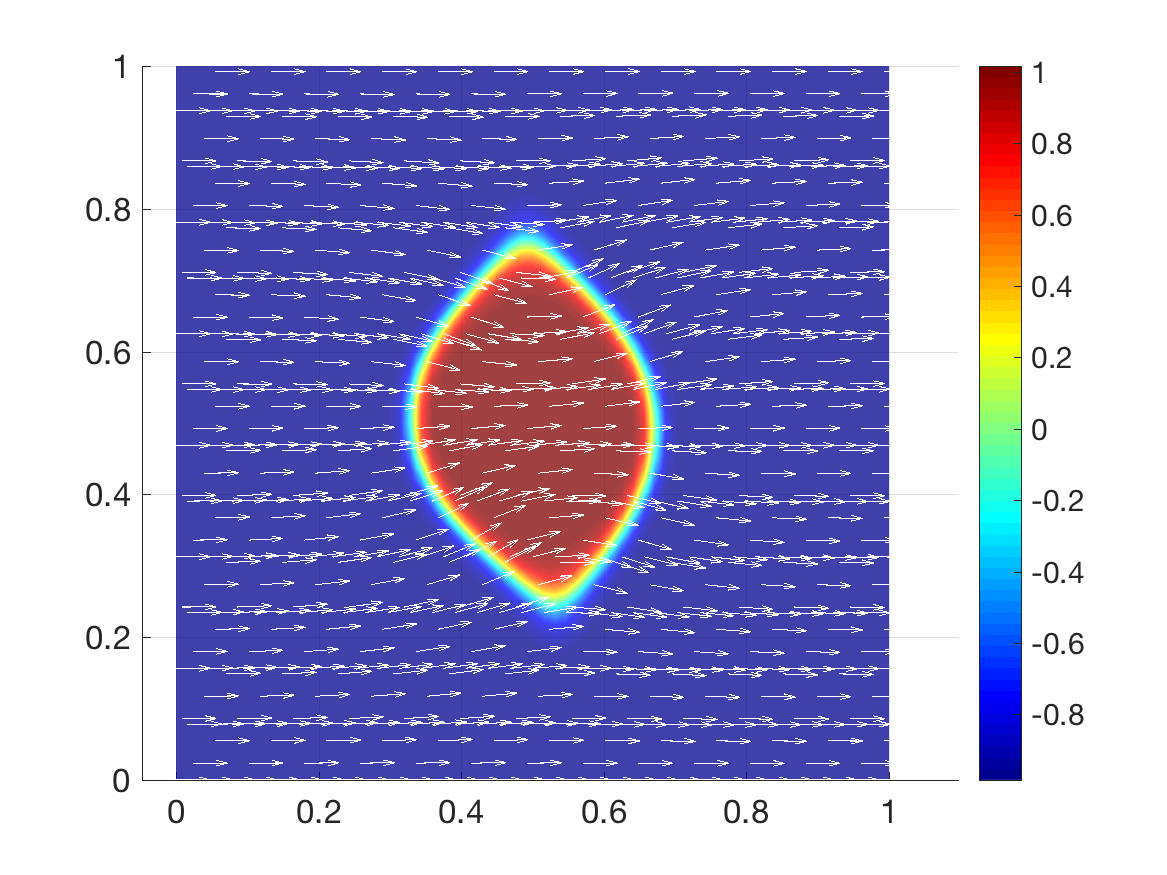}}\\[-4ex]
\caption{Droplet cornering, $\Omega = [0,1]\times[0,1]$, $h = \sqrt{2} / 64$, $\tau = 0.002$ (Section \ref{sec:collide-LC-droplet}). The times displayed are $t=0.2, t=0.4, t=0.42, t=0.44$ (top from left to right) and $t=0.48, t=0.52, t=0.56, t=0.6$ (bottom from left to right).}
\label{fig:droplet-collide}
\end{figure}

Figure \ref{fig:droplet-colliding-energy} displays the energy decreasing property of the scheme for this experiment.

\begin{figure}
\subfloat{\includegraphics[width = 3in]{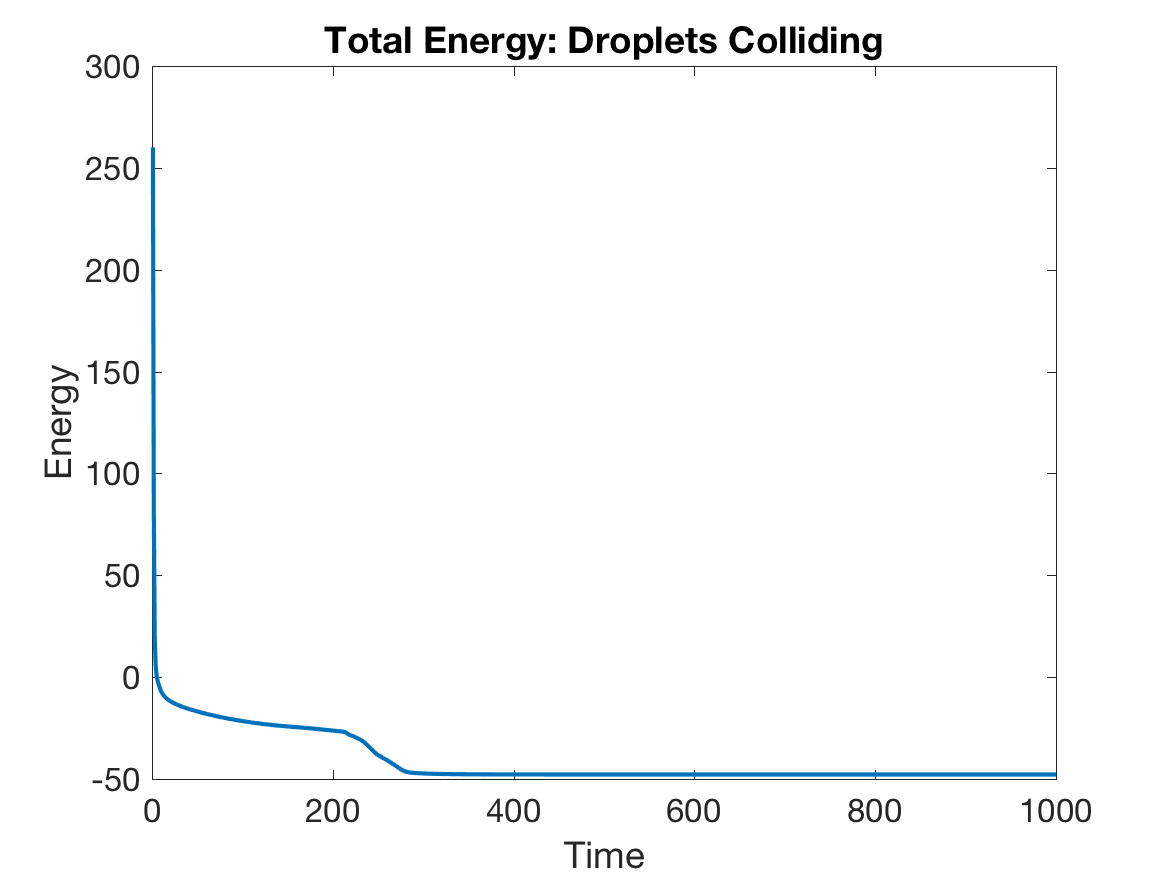}}
\subfloat{\includegraphics[width = 3in]{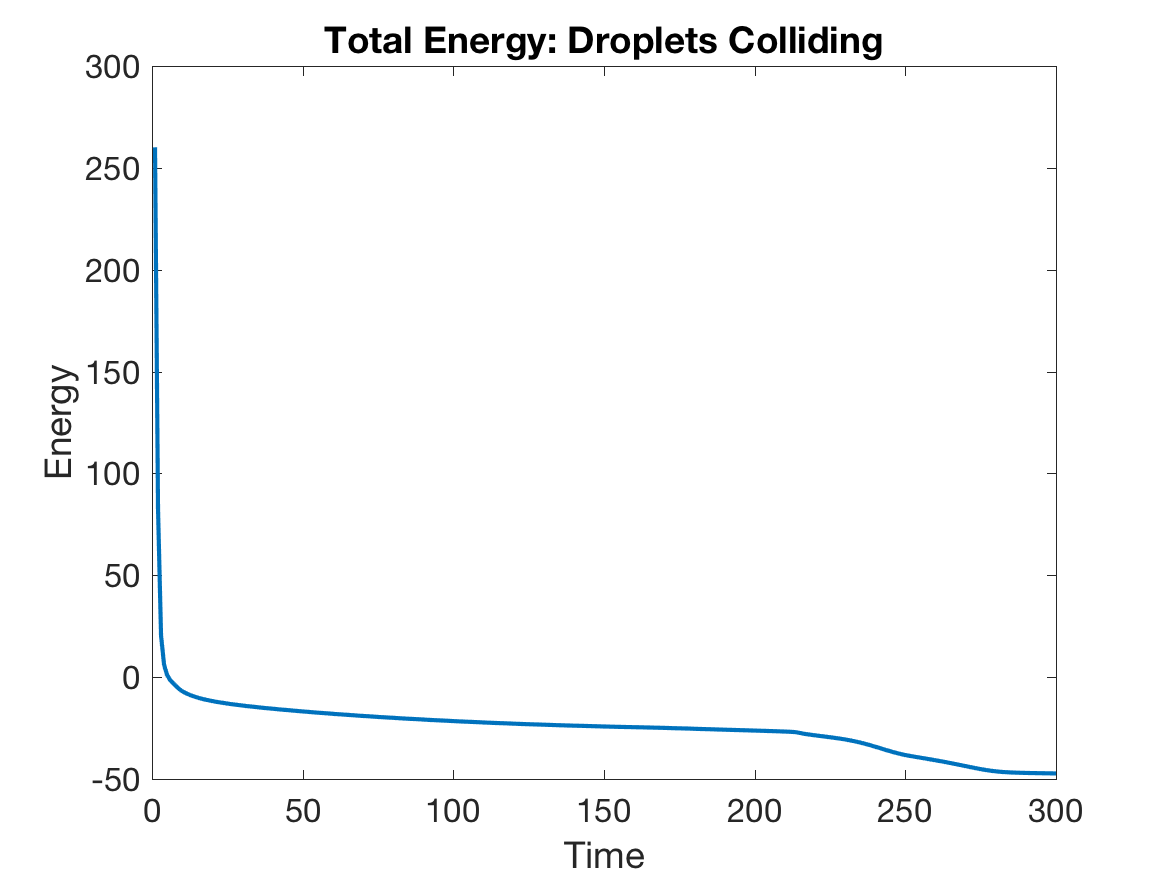}}
\caption{Total energy as a function of time for two droplets colliding (Section \ref{sec:collide-LC-droplet}).}
\label{fig:droplet-colliding-energy}
\end{figure}

\subsection{A Liquid Crystal Droplet Splitting}
\label{sec:splitting-LC-droplet}

The fourth numerical experiment demonstrates a liquid crystal droplet splitting into two droplets. The initial conditions are as follows:
\begin{align*}
s_h^0 &= s^*,
\\
\bnh^0 &=
\begin{cases}
 \frac{(x,y) - (0.35,0.5)}{|(x,y) - (0.35,0.5)|}, \quad x \le 0.5,
\\
\\
 \frac{-\left((x,y) - (0.65,0.5)\right)}{|(x,y) - (0.65,0.5)|}, \quad x > 0.5,
\end{cases}
\\
\phih^0 &= I_h\left\{-\tanh\left(\frac{(x-0.5)^2/0.03 + (y-0.5)^2/0.03 - 1}{2\varepsilon}\right)\right\}.
\end{align*}
The following Dirichlet boundary conditions on $\partial \Omega$ are imposed for $s$ and $\bn$:
\begin{align*}
s = s^*, \quad \bnh &= \begin{cases}
 \frac{(x,y) - (0.3,0.5)}{|(x,y) - (0.3,0.5)|}, \quad x \le 0.5,,
\\
\\
 \frac{-\left((x,y) - (0.7,0.5)\right)}{|(x,y) - (0.7,0.5)|}, \quad x > 0.5.
\end{cases}
\end{align*}

The relevant parameters are $\kappa = 1, \rho = 1, \Werk =  1, \Wdw = 100, \Wchdw = 1, \Wchgd = 1 + \frac{1}{4}(\Wwan + \Wwas) = 11, \Wwas = 20, \Wwan = 20$. The space step size is taken to be $h = \sqrt{2}/64$ and the time step size is taken to be $\tau = 0.002$ with a final stopping time of $T=2.0$. The interfacial width parameter is taken to be $\varepsilon = 3h/\sqrt{2}$. Figure \ref{fig:droplet-split} shows the evolution of the droplet over time. The top two rows display the evolution of the scalar degree of orientation parameter $s$. The bottom two rows show the evolution of the phase field parameter $\phi$ and the director field $\bn$.  The boundary conditions for $\bn$ \emph{induce} two defects in the domain (no annihilation), and the liquid crystal elastic energy acts to push the defects further apart. We note that the weighting on the Cahn-Hilliard gradient energy term $\Wchgd$ is lower than in the previous experiments effectively lowering surface tension on the droplet. If, for example, $\Wchgd = 1 + (\Wwan + \Wwas) = 21$ as before, then the droplet would hold together. Since surface tension is relatively weak in this example, the droplet splits to accommodate the separation of the defects.

\begin{figure}
\subfloat{\includegraphics[width = 2in]{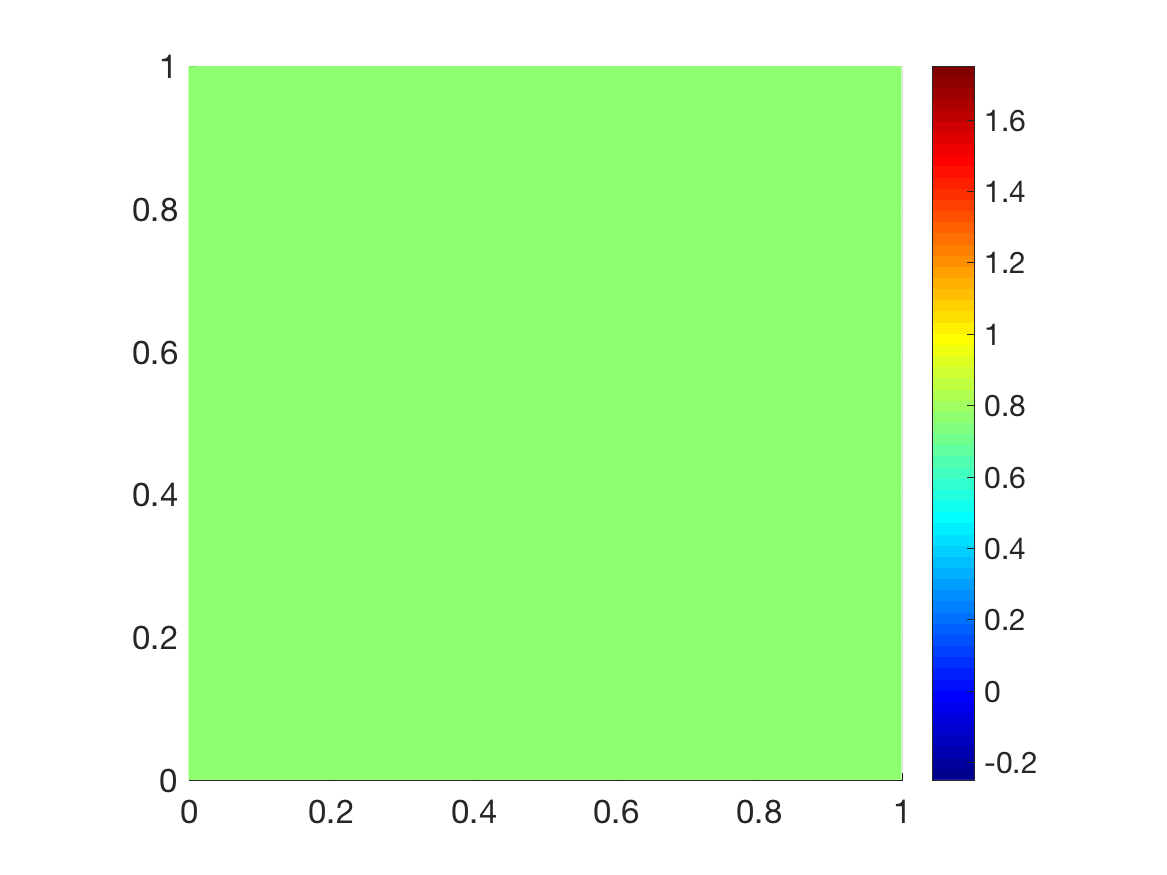}}
\subfloat{\includegraphics[width = 2in]{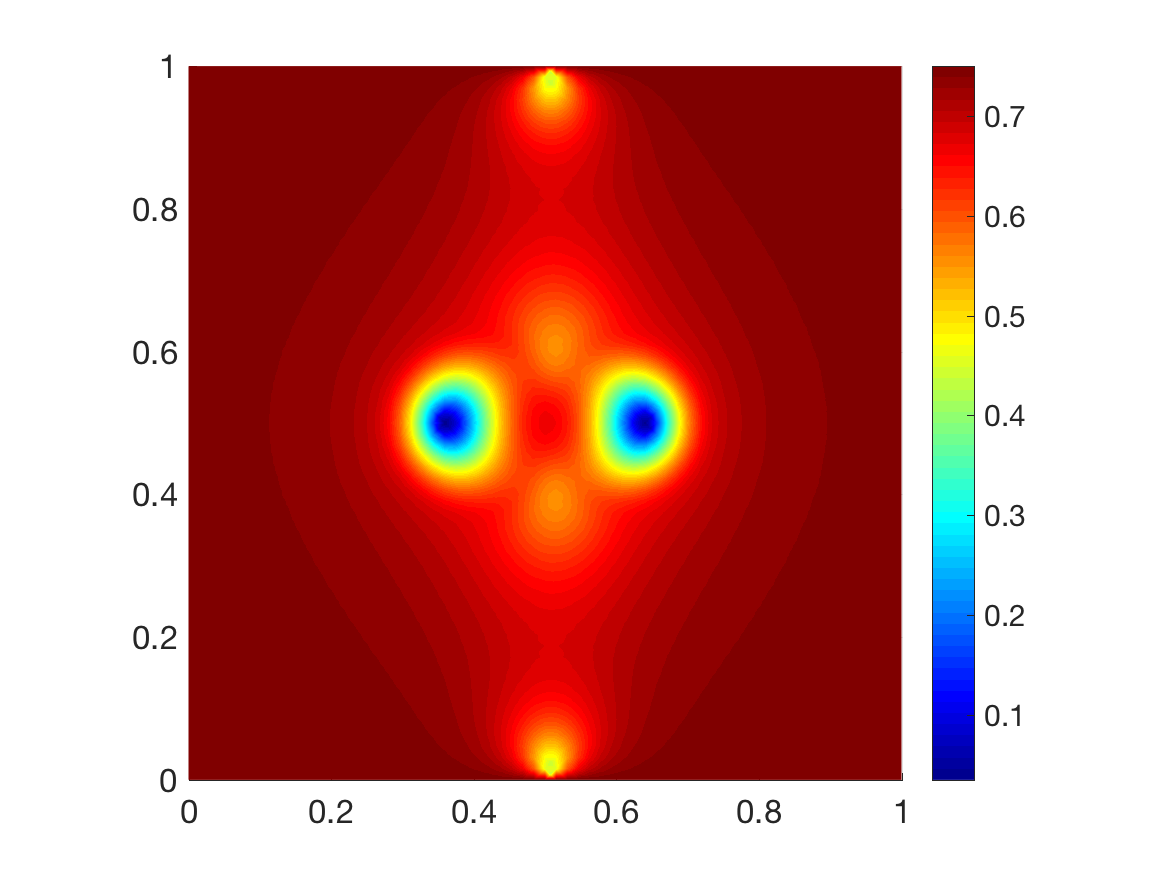}}
\subfloat{\includegraphics[width = 2in]{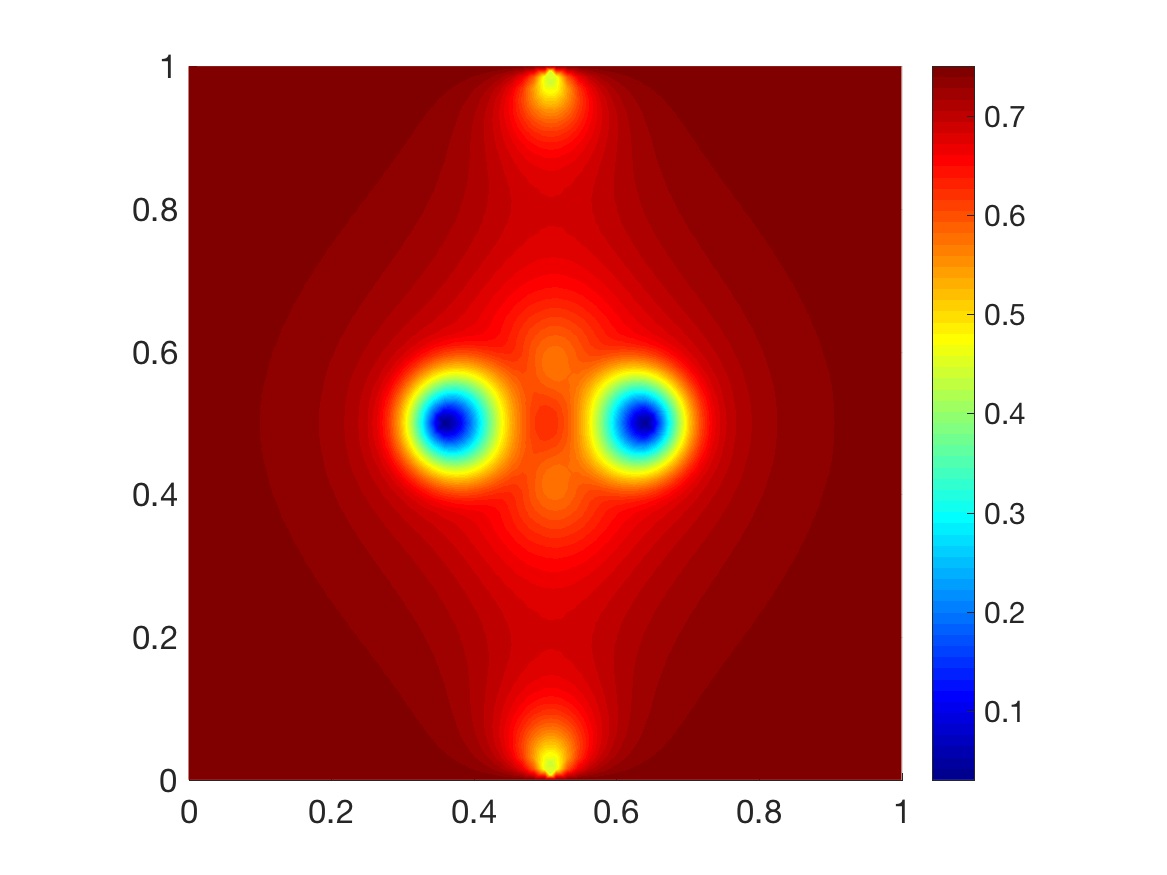}} \\[-4ex]
\subfloat{\includegraphics[width = 2in]{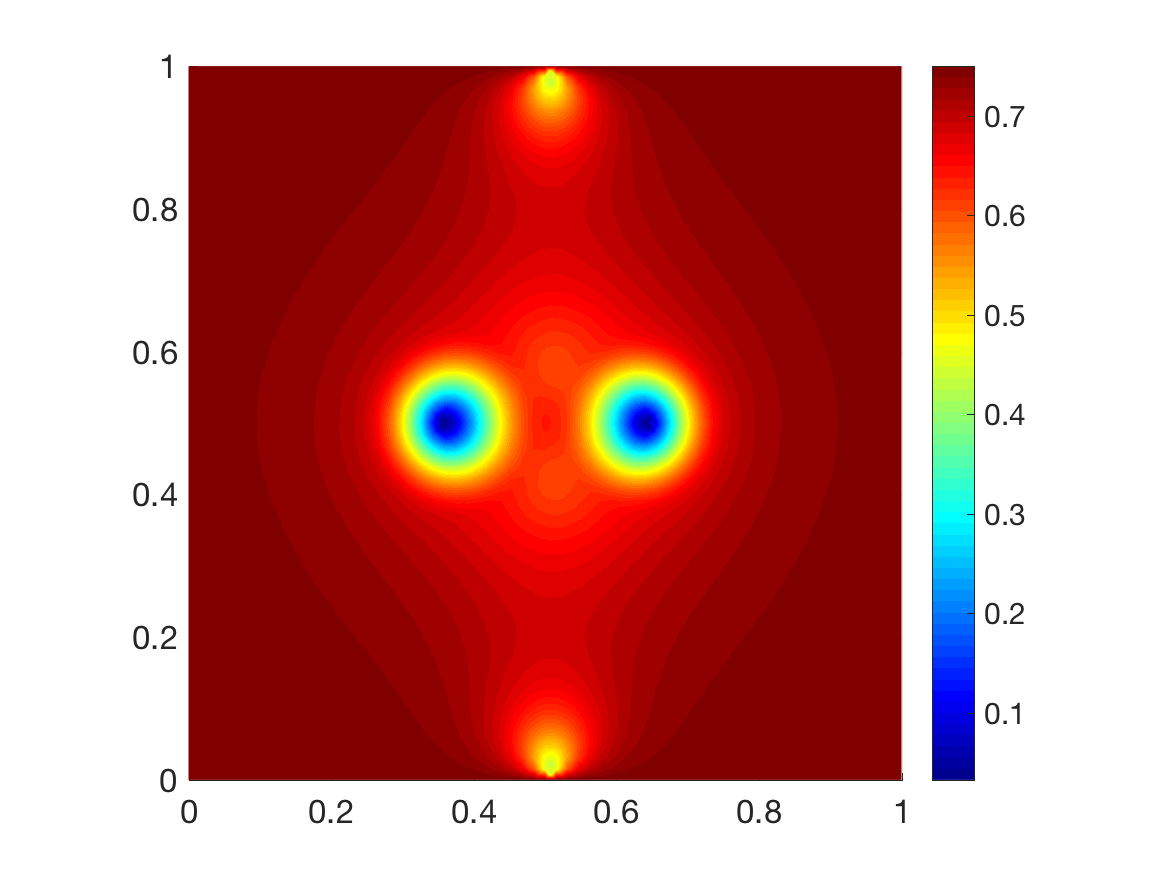}}
\subfloat{\includegraphics[width = 2in]{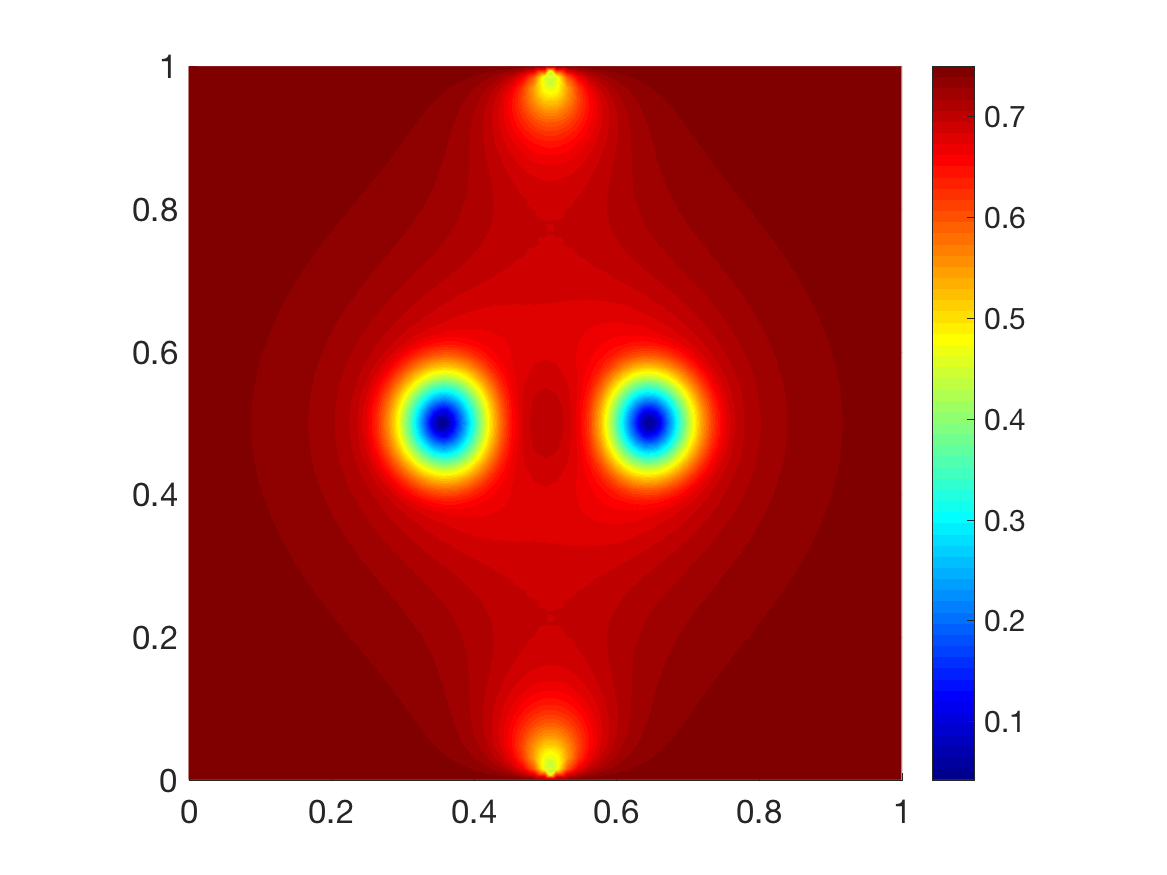}}
\subfloat{\includegraphics[width = 2in]{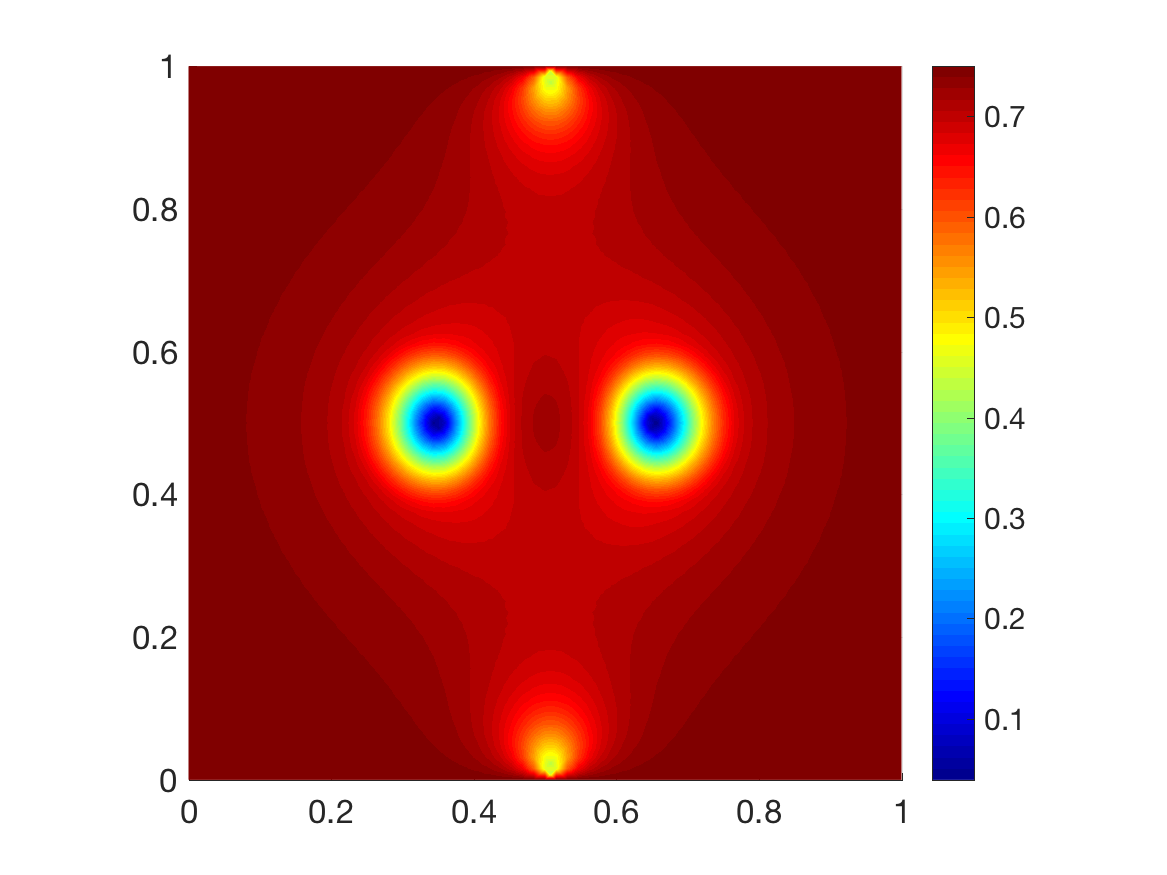}}\\[-4ex]
\subfloat{\includegraphics[width = 2in]{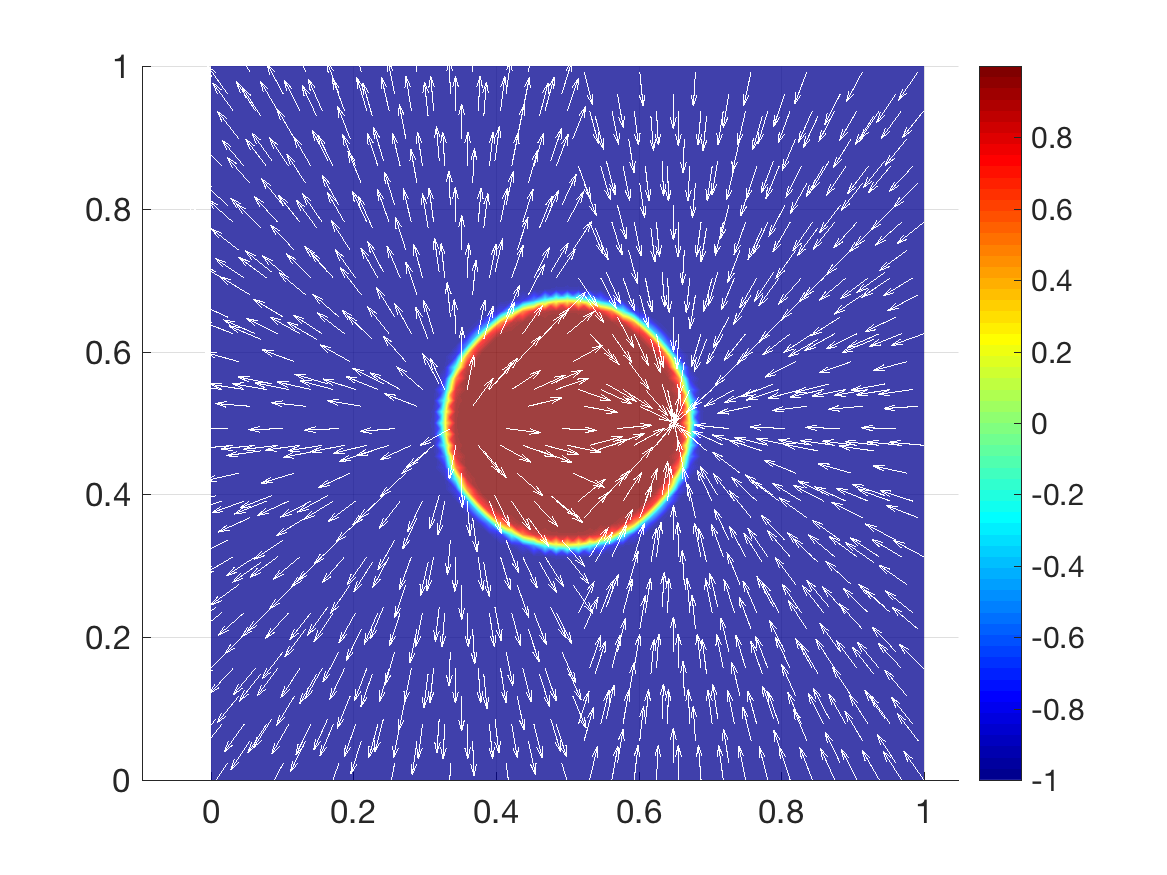}}
\subfloat{\includegraphics[width = 2in]{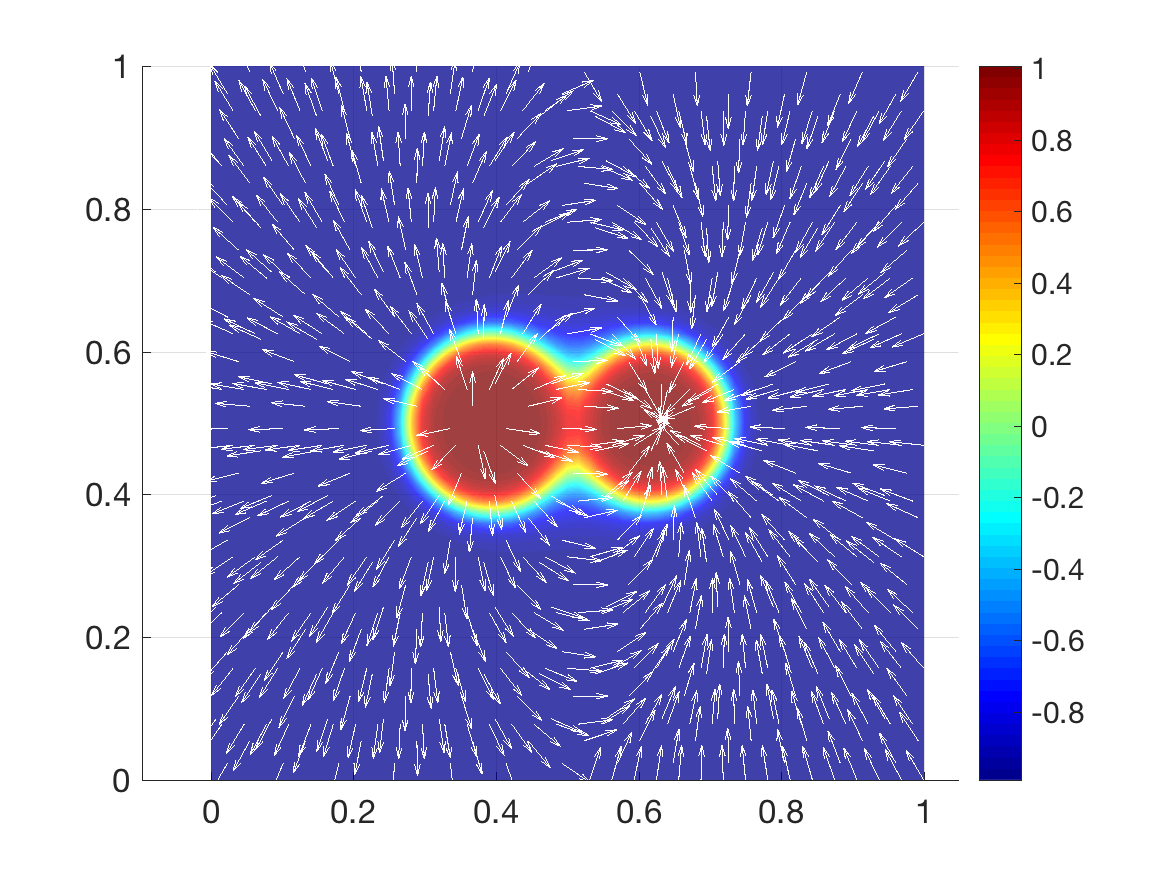}}
\subfloat{\includegraphics[width = 2in]{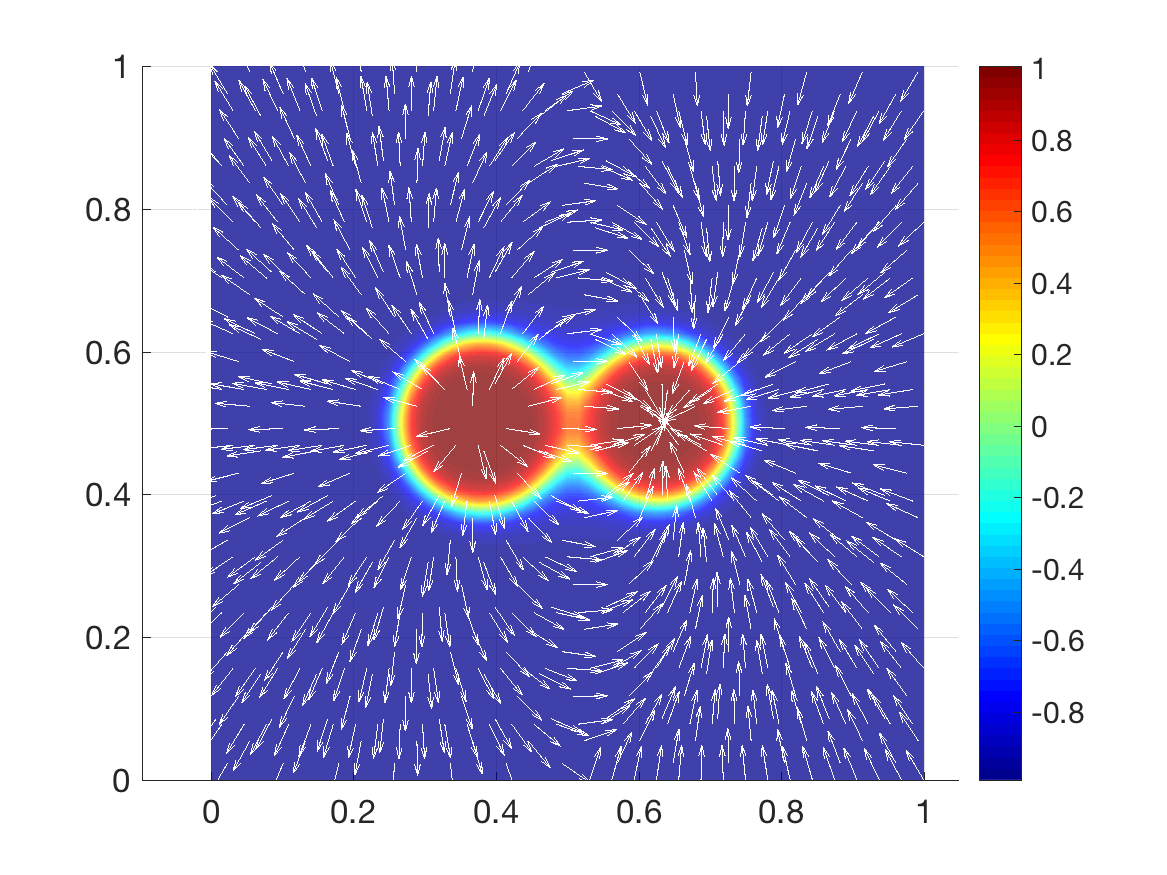}} \\[-4ex]
\subfloat{\includegraphics[width = 2in]{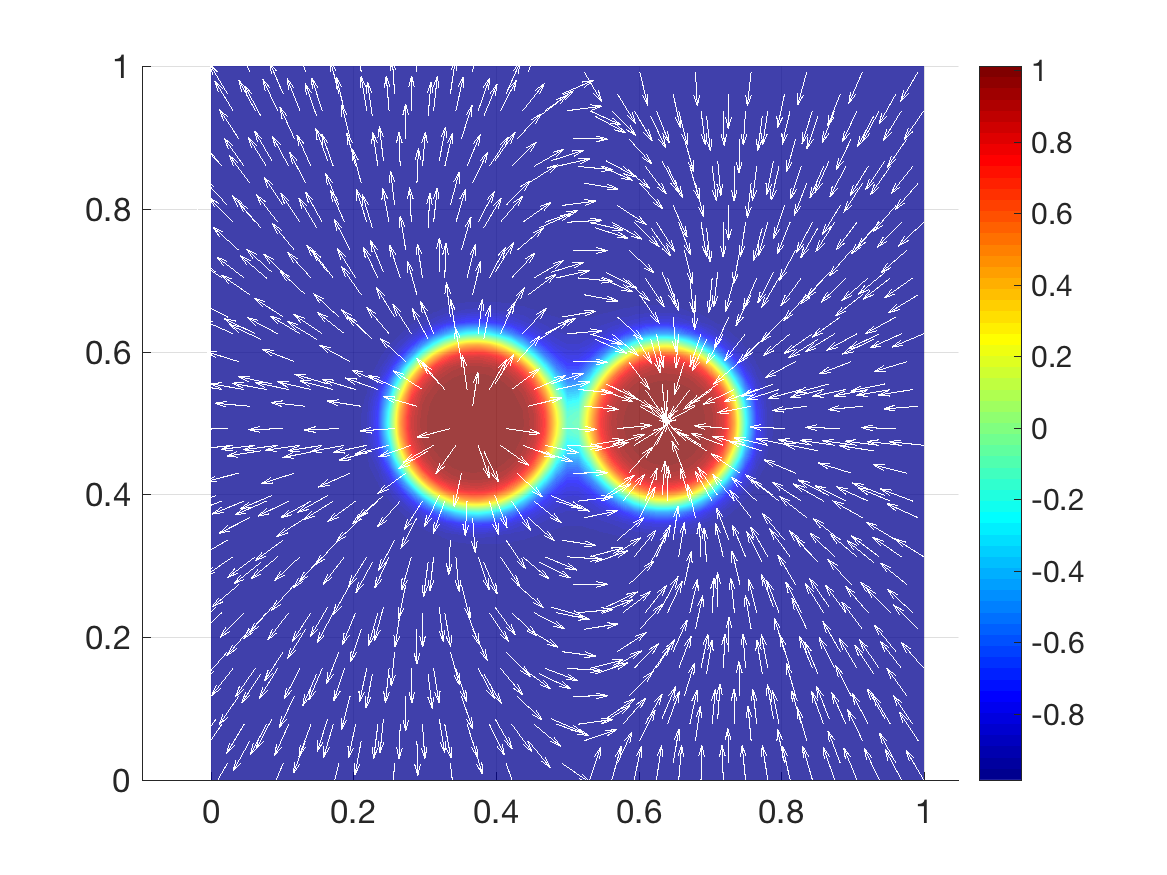}}
\subfloat{\includegraphics[width = 2in]{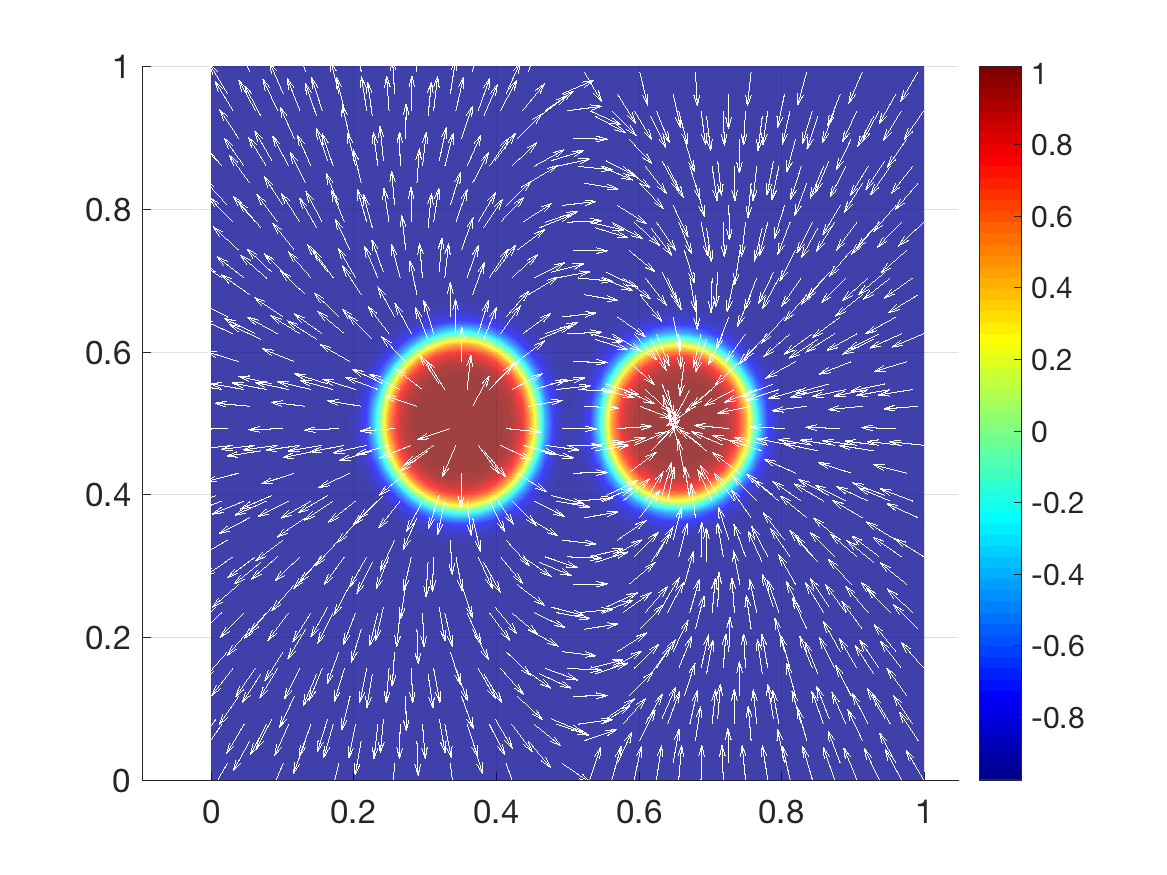}}
\subfloat{\includegraphics[width = 2in]{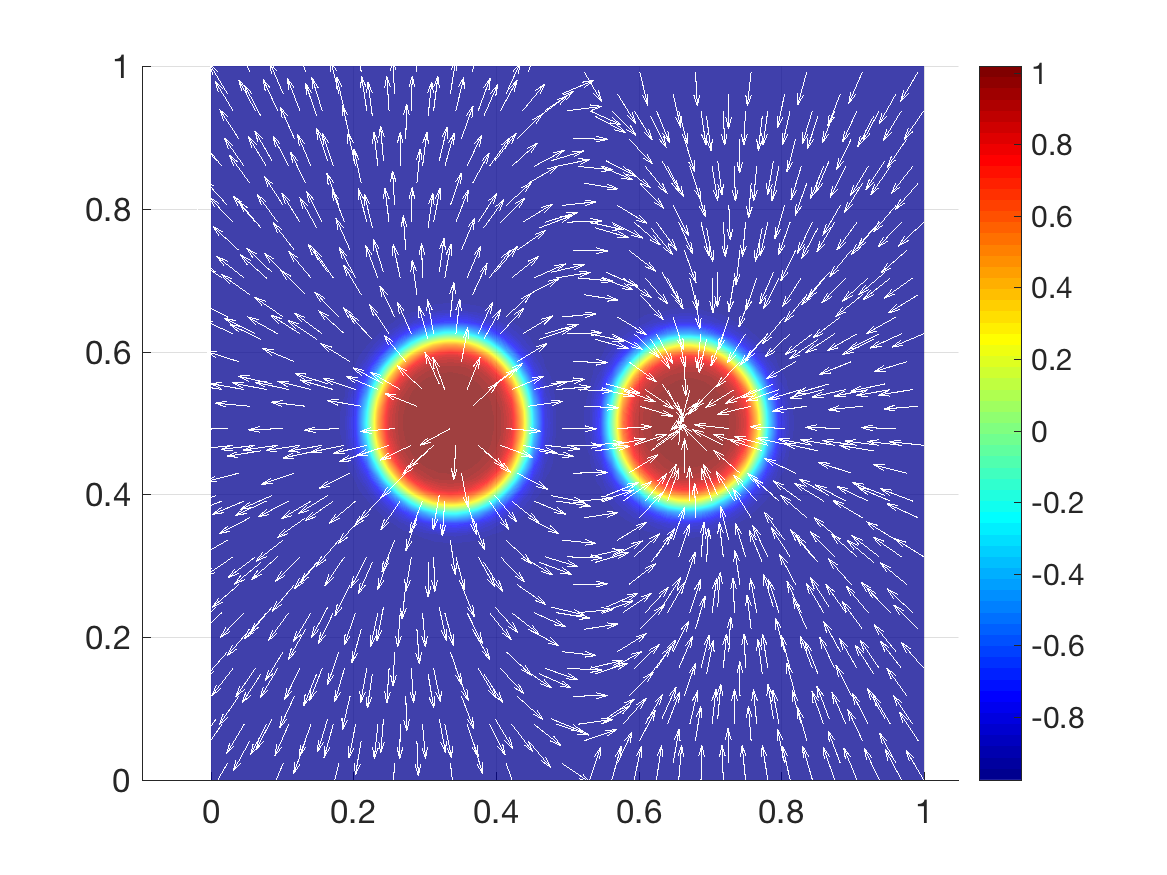}}
\caption{Droplet cornering, $\Omega = [0,1]\times[0,1]$, $h = \sqrt{2} / 64$, $\tau = 0.002$ (Section \ref{sec:splitting-LC-droplet}). The times displayed are $t=0, t=0.04, t=0.08$ (top from left to right) and $t=0.12, t=0.16, t=0.2$ (bottom from left to right).}
\label{fig:droplet-split}
\end{figure}

Figure \ref{fig:droplet-splitting-energy} displays the energy decreasing property of the scheme for this experiment.

\begin{figure}
\subfloat{\includegraphics[width = 3in]{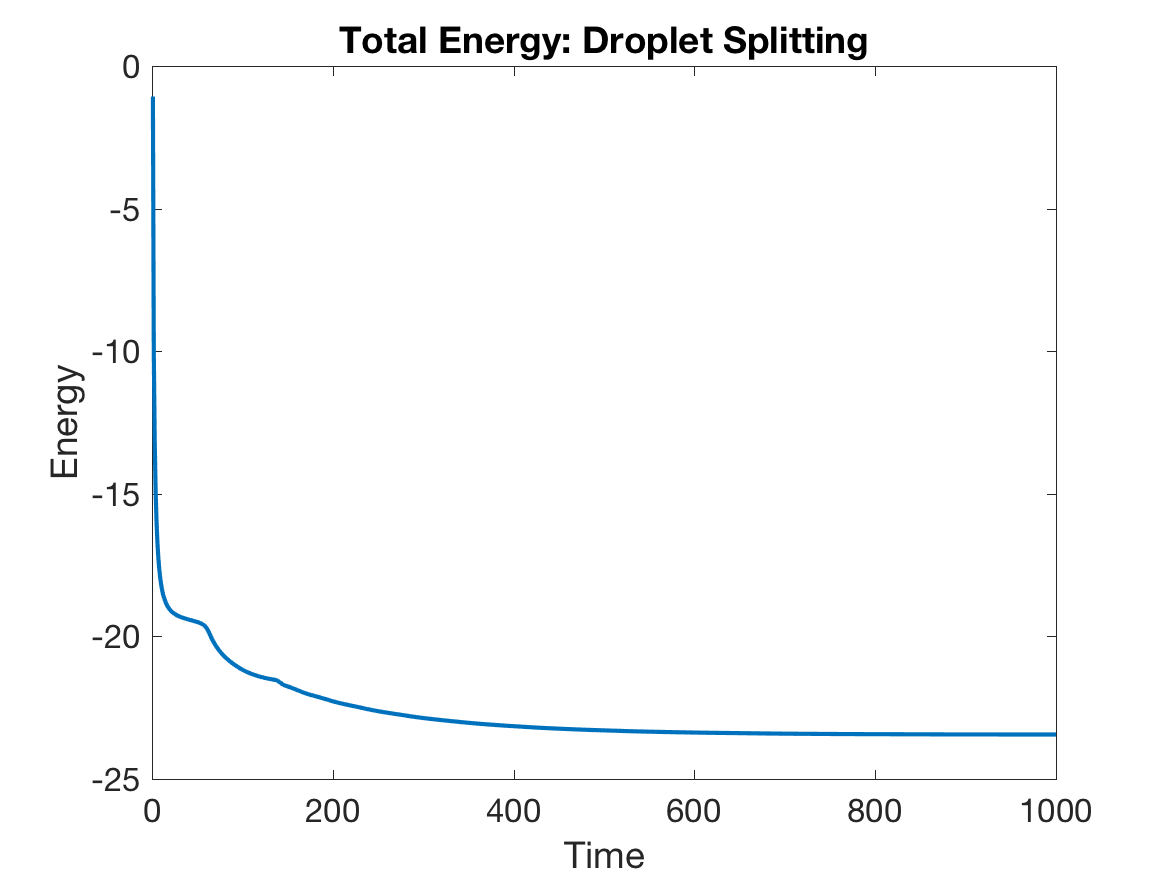}}
\subfloat{\includegraphics[width = 3in]{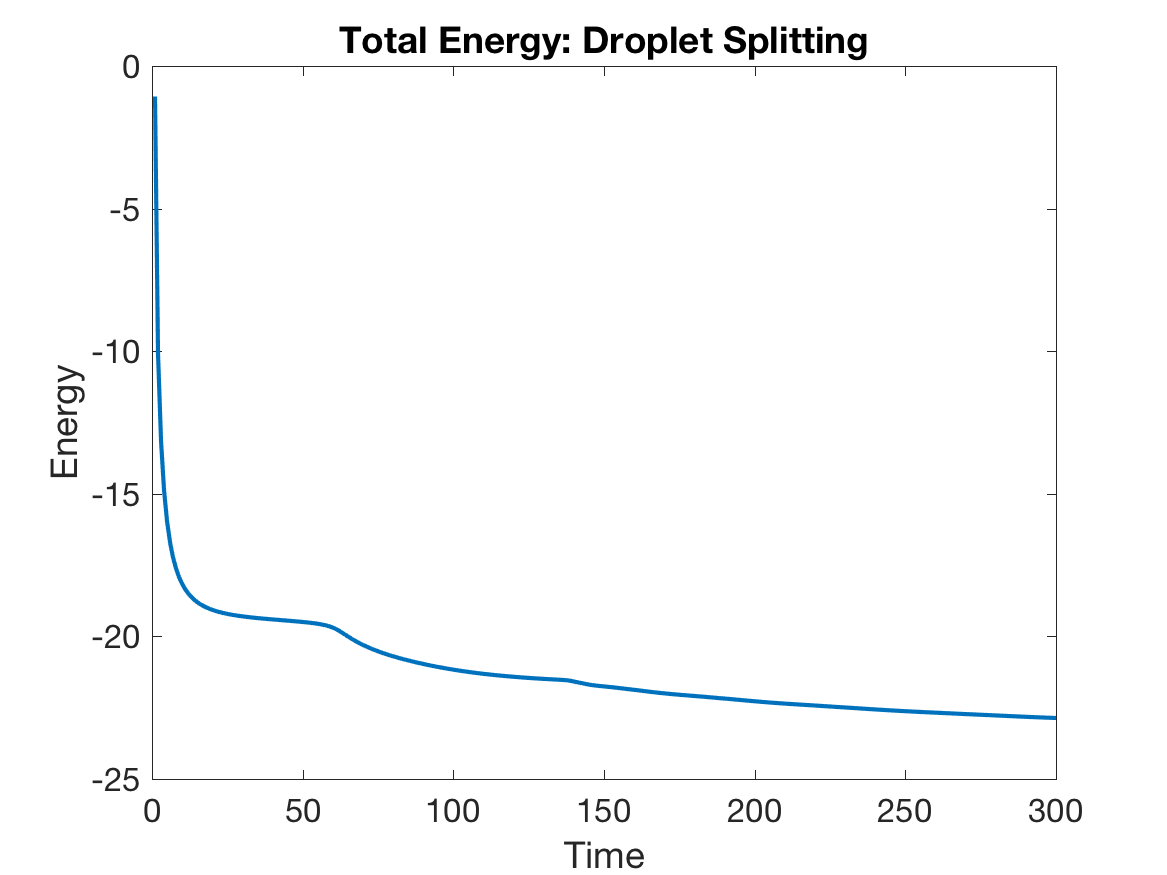}}
\caption{Total energy as a function of time for a droplet splitting (Section \ref{sec:splitting-LC-droplet}).}
\label{fig:droplet-splitting-energy}
\end{figure}

\section{Conclusion}\label{sec:conclusion}

We introduced a phase field model and finite element scheme for nematic liquid crystal droplets in a pure liquid crystal substance.  We presented a finite element method and gradient flow scheme, and used it to explore gradient flow dynamics for finding energy minimizers. We were able to show that the gradient flow method has a monotone energy decreasing property.  We also demonstrated that the discrete energy of the numerical scheme converges, in the sense of $\Gamma$-convergence, to the continuous free energy of the model. Finally, we presented numerical experiments demonstrating four different aspects of liquid crystal droplets:  movement/positioning, cornering, coalescence, and splitting.

Some extensions of this work are: include more general liquid crystal elastic energies, electro-static effects, and coupling to fluid dynamics (e.g. Stokes flow).  Moreover, development of a multi-grid solver for the Cahn-Hilliard equation \cite{brenner17} would enable computations in three dimensions; indeed, this would allow for investigating the connection between defect structures and droplet shapes. Furthermore,  our method could be used to model optimal shapes of liquid crystal droplets, e.g. tactoids \cite{Atherton_LC2016}, nematic droplets on fibers \cite{Batista_PRE2015}, and nematic shells \cite{Serra_LC2016}.  Other applications could be in optimal control of droplets and self-assembly of arrays of droplets.

\section*{Acknowledgements}

S.~W.~Walker acknowledges financial support by the NSF via DMS-1418994 and DMS-1555222 (CAREER).

\bibliographystyle{siam}
\bibliography{PHLC_BIB}
\end{document}